\newtheorem{theorem}{Theorem}
\newtheorem{lemma}[theorem]{Lemma}
\newtheorem{cor}[theorem]{Corollary}
\newtheorem{prop}[theorem]{Proposition}
\newtheorem{defn}[theorem]{Definition}
\newtheorem{exam}[theorem]{Example}
\newcommand{\R}{\mathbb{R}}
\newcommand{\N}{\mathbb{N}}
\newcommand{\Q}{\mathbb{Q}}
\newcommand{\K}{\mathcal{K}}
\newcommand{\Filt}{\mathcal{F}}
\newcommand{\vx}{\mathbf{x}}
\newcommand{\vy}{\mathbf{y}}
\newcommand{\vz}{\mathbf{z}}
\newcommand{\cl}[1]{\overline{#1}}
\newcommand{\tq}{\ge_T}
\newcommand{\te}{=_T}
\newcommand{\Fan}{F_\omega}
\newcommand{\zero}[1]{\mathop{zero} (#1)}
\newcommand{\coz}[1]{\mathop{coz} (#1)}
\newcommand{\Zero}[1]{\mathop{Z} (#1)}
\newcommand{\Coz}[1]{\mathop{Coz} (#1)}
\newcommand{\down}[1]{\downarrow\! #1}
\begin{document}
\title{Point Networks for Special Subspaces of $\R^{\kappa}$}

\author{Ziqin Feng}
\address{Department of Mathematics and Statistics, Auburn University, AL~36849, USA}
\email{zzf0006@auburn.edu}

\author{Paul Gartside}
\address{Department
    of Mathematics, University of Pittsburgh, Pittsburgh, PA~15260, USA}
\email{gartside@math.pitt.edu}

\begin{abstract}
Uniform characterizations of certain special subspaces of products of lines are presented. The characterizations all involve a collection of subsets (base, almost subbase, network or point network) organized by a directed set. New characterizations of Eberlein, Talagrand and Gulko compacta follow.
\end{abstract}

\keywords{Point Networks, Almost Subbases, Eberlein Compact, Talagrand Compact, Gul'ko Compact }

\subjclass[2010]{54C05 54D30 46B50}

\maketitle

\section*{Introduction} The purpose of this paper is to give uniform characterizations of certain special subspaces of products of lines that arise in analysis. The characterizations all involve two objects: first an order, and second a collection of subsets of the space `organized' by the order. For the collection of subsets we take bases, almost subbases, networks and, most importantly,  point networks. As a result we derive some new (and re-derive some old) characterizations of Eberlein, Gulko and Talagrand compacta. The characterizations in terms of point networks  yield new clean proofs that the continuous image of a compact space which is Eberlein, Gulko or Talagrand has the same respective property. Rudin's original proof \cite{BRW77} that the continuous image of an Eberlein compact space is Eberlein compact is widely acknowledged to be involved.

\subsubsection*{Partial Order Preliminaries} All our partially ordered sets, $P$, will be directed (given $p,q$ in $P$ there is an $r$ in $P$ such that $r \ge p$ and $r \ge q$). It turns out that what is important about our directed sets happens cofinally. So we compare directed sets $P$ and $Q$ via the Tukey order. A map $\phi : P \to Q$ is a Tukey quotient if $\phi$ is order-preserving and $\phi(P)$ is cofinal in $Q$.  Write $P \tq Q$ if there is a Tukey quotient of $P$ to $Q$. We say `$P$ and $Q$ are Tukey equivalent', and write $P \te Q$, if $P \tq Q$ and $Q \tq P$.
We note that $P \tq P \times \N$ unless $P$ is countably directed (every countable subset of $P$ has an upper bound).

The results in the first section of this paper apply to \emph{all} directed sets $P$. In the second section we improve the characterizations obtained in the first part by using specific properties of the partial orders associated with Eberlein, Talagrand and Gulko compacta --- respectively: the natural numbers, $\N$, the product order on $\N^\N$, and the set of compact subsets, $\K(M)$, ordered by inclusion, of a separable metrizable space $M$. It is now well-known that $\K(\N^\N) \te \N^\N \tq \N \tq 1$; for compact $M$, $\K(M) \te 1$; for locally compact separable metrizable $M$, $\K(M) \tq \N$; and for any non-locally compact separable metrizable $M$, we have $\K(M) \tq \K(\N^\N)$.

Given a directed set $P$, a collection $\mathcal{C}$ of subsets of a space $X$ is said to be  `$P$-ordered'  if  we can write $\mathcal{C}=\bigcup \{ \mathcal{C}_p : p \in P\}$ such that $p \le p'$ implies $\mathcal{C}_p \subseteq \mathcal{C}_{p'}$. Observe that if $\phi$ is a Tukey quotient of $Q$ to $P$, then  $\mathcal{C}$ is also a $Q$-ordered cover, indeed we can write $\mathcal{C}=\bigcup \{\mathcal{C}'_q : q \in Q\}$ where $\mathcal{C}'_q=\mathcal{C}_{\phi(q)}$. Further, if $\mathcal{Q}$ is a property that a collection of subsets of $X$ might have, then we say that the collection $\mathcal{C}$ is $P$-$\mathcal{Q}$ if it can be $P$-ordered as $\mathcal{C}=\bigcup \{\mathcal{C}_p : p \in P\}$ such that every subcollection $\mathcal{C}_p$ has property $\mathcal{Q}$.

\subsubsection*{Topological Preliminaries} Our topological notation is standard (see \cite{Eng} for example) except we introduce the following uniform method for dealing with compactness and the Lindelof property. Let $\kappa$ be a cardinal.
We say that a space $Y$ is `$<\!\!\kappa$-compact' if every open cover of $Y$ has a subcover of size $<\!\!\kappa$. Then `$<\!\!\aleph_0$-compact' is standard compactness and `$<\!\!\aleph_1$-compact' is the Lindelof property. Other less well known terms (such as `almost subbase' and `point network') will be defined below as it becomes appropriate. All spaces are assumed to be Tychonoff.

\section*{Almost Subbases, Bases, Networks and Point Networks}

\subsection*{Almost Subbases}

Almost subbases were introduced by Dimov \cite{Dim} where, among other things, he used them to characterize the subspaces of Eberlein compacta. We follow his notation and terminology.

Let $X$ be a space. Take any function $f:X \to [0,1]$. The zero set of $f$, denoted $\zero{f}$, is $f^{-1}\{0\}$, while the cozero set of $f$, written $\coz{f}$, is the complement in $X$ of $\zero{f}$. A subset $S$ of $X$ is a (co)zero set if it is the (co)zero set of some continuous function. Write $\Zero{X}$ for the collection of zero subsets of $X$, and $\Coz{X}$ for the collection of cozero subsets. Let $V$ be a subset of $X$. If there exists a collection $U(V)=\{U_n(V): n\in \N\}$ such that $V=\bigcup \{U_n(V):n\in \N\}$ and $U_n(V)\subseteq U_{n+1}(V)$, $U_{2n-1}(V)\in \Zero{X}$, and $U_{2n}\in \Coz{X}$ for each $n\in \N$, then we say that $V$ is $U$-representable and the collection $U(V)$ is a $U$-representation of $V$. We record a simple but useful fact about $U$-representations.
\begin{lemma}\label{fV}\cite{Dim} Let $X$ be a space and let $U(V)=\{U_n(V): n\in \N\}$ be a $U$-representation of a subset $V$ of $X$. Then there exists a continuous function $f_V: X\rightarrow [0,1]$ such that $V=\coz{f_V}$ and $f_V^{-1}[1/(2n-1), 1]=U_{2n-1}(V)$, for every $n\in \N$.
\end{lemma}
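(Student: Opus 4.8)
The plan is to realize $f_V$ as a uniformly convergent ``telescoping'' series $f_V=\sum_{n}g_n$, where each $g_n$ is a continuous ramp that climbs from the value $0$ off $U_{2n}(V)$ up to the value $\delta_n:=\tfrac1{2n-1}-\tfrac1{2n+1}$ on $U_{2n-1}(V)$; the point of this choice of step sizes is the telescoping identity $\sum_{m\ge N}\delta_m=\tfrac1{2N-1}$, which will deliver exactly the prescribed thresholds.

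First I would fix the building blocks. For each $n\in\N$ pick continuous $a_n,b_n:X\to[0,1]$ with $\zero{a_n}=U_{2n-1}(V)$ (possible, as $U_{2n-1}(V)$ is a zero set) and $\zero{b_n}=X\setminus U_{2n}(V)$ (possible, as $U_{2n}(V)$ is cozero, so its complement is a zero set). Since $U_{2n-1}(V)\subseteq U_{2n}(V)$ these two zero sets are disjoint, so $a_n+b_n>0$ on all of $X$, and
\[ g_n:=\delta_n\,\frac{b_n}{a_n+b_n}, \qquad f_V:=\sum_{n=1}^{\infty}g_n \]
makes sense: each $g_n:X\to[0,\delta_n]$ is continuous, vanishes \emph{precisely} off $U_{2n}(V)$ (so $\coz{g_n}=U_{2n}(V)$), and equals $\delta_n$ \emph{precisely} on $U_{2n-1}(V)$. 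As $0\le g_n\le\delta_n$ and $\sum_{m\ge N}\delta_m=\tfrac1{2N-1}\to0$, the partial sums of $f_V$ converge uniformly, so $f_V$ is continuous, with $0\le f_V\le\sum_n\delta_n=1$.

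It then remains to check the two asserted properties. For the cozero set: $f_V(x)>0$ iff $g_n(x)>0$ for some $n$ iff $x\in U_{2n}(V)$ for some $n$, and $\bigcup_n U_{2n}(V)=\bigcup_n U_n(V)=V$ by nestedness, so $\coz{f_V}=V$. For the level sets, fix $n$. If $x\in U_{2n-1}(V)$, then $x\in U_{2m-1}(V)$ for every $m\ge n$, so $g_m(x)=\delta_m$ for all $m\ge n$ and $f_V(x)\ge\sum_{m\ge n}\delta_m=\tfrac1{2n-1}$. If $x\notin U_{2n-1}(V)$, then by nestedness $x\notin U_{2m}(V)$ for every $m\le n-1$, hence $g_m(x)=0$ there, while $g_n(x)<\delta_n$ (as $g_n$ reaches $\delta_n$ only on $U_{2n-1}(V)$); therefore $f_V(x)=g_n(x)+\sum_{m\ge n+1}g_m(x)<\delta_n+\sum_{m\ge n+1}\delta_m=\tfrac1{2n-1}$. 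Combining, $f_V^{-1}[\tfrac1{2n-1},1]=U_{2n-1}(V)$ for all $n$ (in particular $f_V^{-1}\{1\}=U_1(V)$).

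The step requiring the most care is the bookkeeping that turns inclusions into equalities of the level sets. This forces two things: the ramp $g_n$ must attain its extreme values on exactly the right sets --- which is why I use the explicit quotient $b_n/(a_n+b_n)$ rather than some arbitrary continuous function separating the disjoint zero sets $U_{2n-1}(V)$ and $X\setminus U_{2n}(V)$ --- and the increments $\delta_n$ must be chosen so that their tails telescope precisely onto the thresholds $\tfrac1{2n-1}$. Continuity (from uniform convergence) and the cozero identity are then routine.
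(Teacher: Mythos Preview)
Your construction is correct: the telescoping choice $\delta_n=\tfrac1{2n-1}-\tfrac1{2n+1}$ together with the Urysohn-type quotients $b_n/(a_n+b_n)$ does exactly what is needed, and your level-set verification is sound. Note, however, that the paper does not supply its own proof of this lemma --- it is quoted from Dimov \cite{Dim} and used as a black box --- so there is no in-paper argument to compare against; your proof simply fills in the omitted details.
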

Henceforth, given a $U$-representation for a set $V$, we consider such an $f_V$, satisfying the above lemma, as being fixed.

If $\alpha$ is a family of subsets of $X$ and, for each $V\in \alpha$, $U(V)$ is a $U$-representation of $V$, then the family $U(\alpha)=\{U(V):V\in \alpha\}$ is called a $U$-representation of $\alpha$.
\begin{defn} A family $\alpha$ of subsets of a space $X$ is said to be an almost subbase of $X$ if there exists a $U$-representation $U(\alpha)$ of $\alpha$ such that the family $\alpha\cup \{X\setminus U_{2n-1}(V): V\in \alpha, n\in \N\}$ is a subbase of $X$.
\end{defn}

A family $\alpha$ of subsets of a space $X$ is $F$-separating if whenever $x$ and $y$ are distinct points in $X$ then there is a $V$ in $\alpha$ such that $x$ is in $V$ but $y$ is not in $\cl{V}$ or vice versa ($y \in V$, but $x \notin \cl{V}$). A given almost subbase need not be $F$-separating. The next lemma allows us to find one which is --- and to do so in a manner which respects order.

\begin{lemma}\label{f_sep} Let $X$ be a space, and $P$ a directed set. Let $\alpha$ be a $P$-point-$<\!\!\kappa$ almost subbase.
Define $\alpha'= \{f_V^{-1}(r,1]: V\in \alpha, r\in \Q \cap [0,1]\}$.
Then $\alpha'$ is $(P\times\N)$-point-$<\!\!\kappa$ almost subbase which is $F$-separating.
\end{lemma}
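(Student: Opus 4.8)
The plan is to establish the three requirements on $\alpha'$ in turn: that some $U$-representation exhibits it as an almost subbase, that this representation can be organised over $P\times\N$ with point-$<\kappa$ pieces, and that it is $F$-separating. The recurring device is the elementary inclusion $\cl{f_V^{-1}(r,1]}\subseteq f_V^{-1}[r,1]$ (a preimage of a closed interval), so that choosing a rational $r$ strictly above a given value $f_V(x)$ pushes $x$ outside not merely $f_V^{-1}(r,1]$ but its closure — this is exactly the strengthening of ``not in the set'' to ``not in the closure'' that $F$-separation demands.

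First note $\alpha\subseteq\alpha'$, since the case $r=0$ gives $f_V^{-1}(0,1]=\coz{f_V}=V$. I would give each $W\in\alpha'$ a $U$-representation as follows: for $W=V$ (the case $r=0$) retain the already-fixed $U(V)$; for $W=f_V^{-1}(r,1]$ with $r\in\Q\cap(0,1)$ take $U_{2n-1}(W)=f_V^{-1}[\,r+(1-r)/(2n-1),1\,]$ (a zero set) and $U_{2n}(W)=f_V^{-1}(\,r+(1-r)/(2n+1),1\,]$ (a cozero set), which increase up to $W$; and for the degenerate $W=\emptyset$ (the case $r=1$) the constant representation. By Lemma~\ref{fV} the original representation has $U_{2n-1}(V)=f_V^{-1}[1/(2n-1),1]$, so the family $\alpha'\cup\{X\setminus U_{2n-1}(W):W\in\alpha',\,n\in\N\}$ contains $\alpha\cup\{X\setminus U_{2n-1}(V):V\in\alpha,\,n\in\N\}$, which is a subbase of $X$. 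A family of open subsets of $X$ containing a subbase is again a subbase, so $\alpha'$ is an almost subbase.

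For the $P\times\N$-organisation, write $\alpha=\bigcup\{\alpha_p:p\in P\}$ with each $\alpha_p$ point-$<\kappa$ and $\alpha_p\subseteq\alpha_{p'}$ for $p\le p'$, and fix an increasing chain of \emph{finite} rational sets $R_1\subseteq R_2\subseteq\cdots$ with $\bigcup_nR_n=\Q\cap[0,1]$. Put $\alpha'_{(p,n)}=\{f_V^{-1}(r,1]:V\in\alpha_p,\ r\in R_n\}$; this is monotone in $(p,n)\in P\times\N$ and has union $\alpha'$. For a fixed point $x$, the members of $\alpha'_{(p,n)}$ through $x$ are among the $f_V^{-1}(r,1]$ with $V\in\alpha_p$, $x\in V$, $r\in R_n$, hence number at most $|\{V\in\alpha_p:x\in V\}|\cdot|R_n|<\kappa$ (a set of size $<\kappa$ times a finite set is still of size $<\kappa$). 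So $\alpha'$ is $(P\times\N)$-point-$<\kappa$. Note one cannot throw in all rationals at once without destroying point-smallness; that is precisely the rôle of the extra $\N$-coordinate.

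Finally, for $F$-separation let $x\ne y$ in $X$. Since $\alpha\cup\{X\setminus U_{2n-1}(V):V\in\alpha,\,n\in\N\}$ is a subbase of the $T_1$ space $X$, some member $S$ of it contains exactly one of $x,y$; say $x\in S$ and $y\notin S$ (the reverse case is symmetric). If $S=V\in\alpha$, then $f_V(x)>0=f_V(y)$; choosing $r\in\Q$ with $0<r<f_V(x)$ and setting $W=f_V^{-1}(r,1]\in\alpha'$ gives $x\in W$ while $\cl W\subseteq f_V^{-1}[r,1]$ misses $y$. If $S=X\setminus U_{2n-1}(V)$, then $f_V(x)<1/(2n-1)\le f_V(y)$ by Lemma~\ref{fV}; choosing $r\in\Q$ with $f_V(x)<r<1/(2n-1)$ and $W=f_V^{-1}(r,1]\in\alpha'$ gives $y\in W$ while $\cl W\subseteq f_V^{-1}[r,1]$ misses $x$. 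Either way some member of $\alpha'$ contains one of the two points and has closure avoiding the other.

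I do not expect a deep obstacle: the argument is essentially bookkeeping. The one genuinely chosen step — and the crux of the whole lemma — is the placement of the rational $r$ strictly between a value of $f_V$ and the relevant threshold ($0$ in one case, $1/(2n-1)$ in the other), together with the inclusion $\cl{f_V^{-1}(r,1]}\subseteq f_V^{-1}[r,1]$. The only other point needing care is making sure the rational-stratification in the second paragraph keeps each piece point-$<\kappa$, which forces the passage from $P$ to $P\times\N$.
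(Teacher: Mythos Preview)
Your proof is correct and follows essentially the same approach as the paper's: both observe $\alpha\subseteq\alpha'$ to get the almost subbase property, and both organise $\alpha'$ over $P\times\N$ by stratifying the rationals into finite initial pieces. The paper is considerably terser --- it simply asserts that $\alpha'$ is an almost subbase from $\alpha\subseteq\alpha'$ and defers the $F$-separation to a citation of \cite{Dim2} --- whereas you spell out explicit $U$-representations for the new sets and give the full two-case argument for $F$-separation; these are exactly the details the paper suppresses.
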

\begin{proof} Note that $\alpha = \{ f_V^{-1} (0,1] : V \in \alpha\}$, so $\alpha$ is a subset of $\alpha'$. Hence $\alpha'$ is an almost subbase. It is straight forward to check (and observed in \cite{Dim2}) that $\alpha'$ is $F$-separating.

We know $\alpha$ can be $P$-ordered, say as $\alpha = \bigcup \{\alpha_p : p \in P\}$, where every $\alpha_p$ is point-$<\!\!\kappa$. Fix an enumeration $\{q_n\}_{n \in \N}$ of $[0,1] \cap \Q$. For $p$ in $P$ and $n$ in $\N$, set $\alpha'_{p,n} = \{ f_V^{-1} (q_i,1] : i=1, \ldots , n, \ V \in \alpha_p\}$. It is clear that $\alpha' = \bigcup \{ \alpha'_{p,n} : (p,n) \in P \times \N\}$, and $\alpha'_{p,n} \subseteq \alpha'_{p',n'}$ when $(p,n) \le (p',n')$. So we have a $(P\times \N)$-ordering of $\alpha'$. Further, for a fixed $p$, we know that $\alpha'_p$ is point-$<\!\!\kappa$. So for a fixed $i$ the collection $\{f_V (q_i,1] : V \in \alpha_p\}$ is also point-$<\!\!\kappa$. Hence, for any $p$ in $P$ and $n$ in $\N$, $\alpha'_{p,n}$, as a finite union of point-$<\!\!\kappa$ collections, is point-$<\!\!\kappa$.
\end{proof}

The next lemma is clear.
\begin{lemma}\label{as_hered}
Let $X$ be a space, $A$ a subspace of $X$, and $P$ a directed set. Let $\alpha$ be a $P$-point-$<\!\!\kappa$ almost subbase.
Define $\alpha_A= \{V_A : V_A=V \cap A, V \in \alpha\}$, and $U(V_A) = \{ U_n(V) \cap A : n \in \N\}$.
Then $\alpha_A$ is $P$-point-$<\!\!\kappa$ almost subbase with respect to the stated $U$-representation for $A$.
\end{lemma}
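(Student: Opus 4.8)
The plan is to check the three ingredients of the conclusion separately: that $U(V_A)$ is genuinely a $U$-representation of $V_A$ inside $A$, that $\alpha_A$ together with the complements of its odd-indexed terms is a subbase of $A$, and that the evident restriction of the $P$-ordering of $\alpha$ has point-$<\!\!\kappa$ pieces.

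First, for the $U$-representations: $V=\bigcup_n U_n(V)$ gives $V_A=V\cap A=\bigcup_n (U_n(V)\cap A)$, and $U_n(V)\subseteq U_{n+1}(V)$ gives the required monotonicity after intersecting with $A$. The only substantive point is that $U_{2n-1}(V)\cap A$ is a zero set of $A$ and $U_{2n}(V)\cap A$ a cozero set of $A$; this holds because the restriction to $A$ of a continuous $[0,1]$-valued function on $X$ is continuous, so the trace on $A$ of a (co)zero set of $X$ is a (co)zero set of $A$ --- one may simply restrict the functions $f_V$ supplied by Lemma \ref{fV}. Hence each $V_A$ is $U$-representable in $A$ via $U(V_A)$, and $U(\alpha_A)=\{U(V_A):V_A\in\alpha_A\}$ is a $U$-representation of $\alpha_A$. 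Next, to see $\alpha_A$ is an almost subbase of $A$ with respect to this $U$-representation, I would invoke the standard fact that the traces $\{S\cap A : S\in\mathcal{S}\}$ of a subbase $\mathcal{S}$ of $X$ form a subbase of $A$ (finite intersections of the traces are exactly the traces of the finite intersections, which form a base of $A$). Applying this to the subbase $\alpha\cup\{X\setminus U_{2n-1}(V): V\in\alpha, n\in\N\}$ of $X$ and using $(X\setminus U_{2n-1}(V))\cap A = A\setminus(U_{2n-1}(V)\cap A)$ yields exactly that $\alpha_A\cup\{A\setminus(U_{2n-1}(V)\cap A): V\in\alpha, n\in\N\}$ is a subbase of $A$.

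Finally, writing $\alpha=\bigcup\{\alpha_p : p\in P\}$ with each $\alpha_p$ point-$<\!\!\kappa$, put $(\alpha_A)_p=\{V\cap A : V\in\alpha_p\}$, so that $\alpha_A=\bigcup\{(\alpha_A)_p : p\in P\}$ is a $P$-ordering. For a fixed $x\in A$, the map $V\mapsto V\cap A$ carries $\{V\in\alpha_p : x\in V\}$ onto $\{W\in(\alpha_A)_p : x\in W\}$ (since $x\in V\cap A$ iff $x\in V$), so the latter has cardinality at most that of the former, which is $<\!\!\kappa$; hence each $(\alpha_A)_p$ is point-$<\!\!\kappa$. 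There is no serious obstacle here: the only things to keep in mind are that passing to traces can only decrease point-multiplicities, that a subbase restricts to a subbase, and --- a harmless bookkeeping matter --- that if two members of $\alpha$ have the same trace on $A$ one simply fixes a single choice of $U$-representation for the resulting member of $\alpha_A$.
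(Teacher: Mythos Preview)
Your proposal is correct and is precisely the routine verification one would expect; the paper itself gives no proof at all, merely declaring ``The next lemma is clear.'' Your argument simply makes explicit the three obvious points --- traces of (co)zero sets are (co)zero sets, traces of subbases are subbases, and tracing can only decrease point-multiplicities --- and your remark about fixing a single $U$-representation when distinct $V$'s have the same trace is a sensible piece of bookkeeping that the paper glosses over.
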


\subsection*{Almost Subbases characterize Embeddings}

For any free filter $\Filt$ on an infinite set $X$, write $X(\Filt)$ for the space with underlying set $X \cup \{\ast\}$, and topology where the points of $X$ are isolated and neighborhoods of $\ast$ are $\{\ast\} \cup U$ for $U$ in $\Filt$. We allow the possibility that $\Filt=\mathbb{P}(X)$, in which case $X(\Filt)$ has the discrete topology. Write  $A(\kappa)$, for the case $X=\kappa$, an infinite cardinal,  and $\Filt=\{(\kappa\setminus F): F\text{ is a finite subset of }\kappa\}$. It is the one point compactification of a discrete space of size $\kappa$. Write $L(\kappa)$, for the case $X=\kappa$  and $\Filt=\{(\omega_1\setminus C): C \text{ is a countable subset of }\kappa\}$. It is the one point Lindelofication of a discrete space of size $\kappa$.

Recall that for any space $X$, $C_p(X)$ is the subspace of $\R^X$ consisting of all continuous $f: X \to \R$. When $X=X(\Filt)$, write $C_p(X,\ast)$ for all continuous $f:X \to \R$ such that $f(\ast)=0$. It is easy to check that $C_p(X(\Filt))$ and $C_p(X(\Filt),\ast)$ are homeomorphic.

Then it is clear that the standard $\Sigma$-product of $\kappa$ many lines, $\Sigma \R^\kappa$ is homeomorphic to $C_p(L(\kappa))$, while the standard $\Sigma_*$ product of $\kappa$ many lines, is homeomorphic to  $C_p(A(\kappa))$.

By definition a compact space is Corson compact if and only if it embeds in some $\Sigma$-product, and so if and only if it embeds in some $C_p(L(\kappa))$. It is well known that a compact space is Eberlein compact if and only if it embeds in some $\Sigma_*$ product, and so if and only if it embeds in some $C_p(A(\kappa))$.

Mercourakis \cite{Merc} and Sokolov \cite{Sokol} have shown that a compact space is Talagrand (respectively, Gulko) compact if and only if it embeds in some $C_p(X(\Filt))$ where $X(\Filt)$ has an $\N^\N$-ordered cover by compact sets (respectively, where $X(\Filt)$ has a $\K(M)$-ordered compact cover, for some separable metrizable space $M$).

\begin{theorem}\label{as_is_embed} Let $P$ be a directed set and $\kappa$ an infinite cardinal.

(1) Every space $Y$ which embeds in some $C_p(X(\Filt))$ where $X(\Filt)$ has a $P$-ordered cover by $<\!\!\kappa$-compact sets has a $(P\times\N)$-point-$<\!\!\kappa$ almost subbase.

(2) Every space $Y$ with a $P$-point-$<\!\!\kappa$ almost subbase embeds in some $C_p(X(\Filt))$ where $X(\Filt)$ has a $(P\times \N)$-ordered cover by $<\!\!\kappa$-compact sets.

If $P$ is not countably directed then `$P \times \N$' can be replaced simply by `$P$' in both (1) and (2).
\end{theorem}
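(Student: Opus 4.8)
The plan is to prove the two implications separately; the final remark is then formal.

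For (1), I would first invoke Lemma~\ref{as_hered}: an almost subbase restricts to any subspace, keeping its $P$-ordering and its point-$<\!\!\kappa$ pieces, so it suffices to build a $(P\times\N)$-point-$<\!\!\kappa$ almost subbase of $C_p(X(\Filt))$ itself and then intersect with a homeomorphic copy of $Y$. Identifying $C_p(X(\Filt))$ with $C_p(X(\Filt),\ast)\subseteq\R^X$, for $x\in X$ and $q\in\Q\cap(0,\infty)$ set $V^{+}_{x,q}=\{f:f(x)>q\}$ and $V^{-}_{x,q}=\{f:f(x)<-q\}$, and let $\alpha$ collect all of these. Each is cozero and $U$-representable by interleaving the zero sets $\{f(x)\ge q+\tfrac1{2n-1}\}$ (respectively $\{f(x)\le-q-\tfrac1{2n-1}\}$) with the cozero sets $\{f(x)>q+\tfrac1{2n}\}$ (respectively $\{f(x)<-q-\tfrac1{2n}\}$); with this representation $\alpha$ together with the complements $X\setminus U_{2n-1}(V)$ yields all the half-lines $\{f(x)<r\}$ and $\{f(x)>r\}$ ($x\in X$, $r\in\Q\setminus\{0\}$), which form a subbase of $C_p(X(\Filt),\ast)$, so $\alpha$ is an almost subbase.

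To order $\alpha$, I would use the given $P$-ordered cover of $X(\Filt)$ by $<\!\!\kappa$-compact sets, written as an increasing family $\{K_p:p\in P\}$ with union $X(\Filt)$, together with an enumeration $\{q_i\}_{i\in\N}$ of $\Q\cap(0,\infty)$: put $\alpha_{p,n}=\{V^{\pm}_{x,q_i}:x\in K_p\cap X,\ i\le n\}$, so the $\alpha_{p,n}$ increase with $(p,n)\in P\times\N$ and their union is $\alpha$. The crux is that each $\alpha_{p,n}$ is point-$<\!\!\kappa$: if $f\in V^{\pm}_{x,q_i}$ with $i\le n$ then $|f(x)|\ge\delta_n:=\min\{q_1,\dots,q_n\}>0$, so all relevant $x$ lie in $S=\{x\in K_p\cap X:|f(x)|\ge\delta_n\}$; since $f$ is continuous with $f(\ast)=0$, the set $\{x\in X:|f(x)|\ge\delta_n\}$ is closed in $X(\Filt)$, and it is discrete (the points of $X$ are isolated), so $S$ is a closed discrete subspace of the $<\!\!\kappa$-compact set $K_p$ and therefore $|S|<\kappa$; each such $x$ lies in at most $2n$ members of $\alpha_{p,n}$, so $f$ lies in fewer than $\kappa$ of them. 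This proves (1).

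For (2), I would take $X(\Filt)$ to have underlying set $\alpha=\bigcup\{\alpha_p:p\in P\}$ (the case of finite $\alpha$ being trivial), with the functions $f_V:Y\to[0,1]$ of Lemma~\ref{fV} fixed and with $\Filt$ dual to the ideal $\mathcal I=\{A\subseteq\alpha:|A\cap\alpha_p|<\kappa\ \text{for all }p\}$; then $\Filt$ is free and $X(\Filt)$ is discrete exactly when every $\alpha_p$ has size $<\kappa$. Define $e:Y\to\R^{\alpha}$ by $e(y)_V=f_V(y)$. Since for each $p$ and $\epsilon>0$ the set $\{V\in\alpha_p:f_V(y)\ge\epsilon\}\subseteq\{V\in\alpha_p:y\in V\}$ has size $<\kappa$, we have $\{V:f_V(y)\ge\epsilon\}\in\mathcal I$, so $e(y)$ (extended by $e(y)(\ast)=0$) is continuous on $X(\Filt)$, and $e$ maps $Y$ continuously into $C_p(X(\Filt),\ast)\cong C_p(X(\Filt))$. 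That $e$ is an embedding uses the almost subbase structure: $e^{-1}\{z:z_V>0\}=V$ and $e^{-1}\{z:z_V<\tfrac1{2n-1}\}=X\setminus U_{2n-1}(V)$, so every member of the subbase $\alpha\cup\{X\setminus U_{2n-1}(V):V\in\alpha,\,n\in\N\}$ of $Y$, hence every basic open set, is the $e$-preimage of an open subset of $\R^{\alpha}$; this makes $e$ injective and open onto $e(Y)$. Finally $K_p:=\{\ast\}\cup\alpha_p$ is $<\!\!\kappa$-compact (any open cover is refined by a basic neighbourhood $\{\ast\}\cup(\alpha\setminus A)$ of $\ast$, with $A\in\mathcal I$, together with the fewer than $\kappa$ points of $\alpha_p\cap A$), the $K_p$ increase and cover $X(\Filt)$, so $\{K_p:p\in P\}$ is a $P$-ordered --- in particular $(P\times\N)$-ordered --- cover of $X(\Filt)$ by $<\!\!\kappa$-compact sets; this proves (2).

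For the last sentence: when $P$ is not countably directed one has $P\te P\times\N$, so composing with a Tukey quotient $P\to P\times\N$ turns the almost subbase from (1) into a $P$-ordered one whose pieces are still point-$<\!\!\kappa$ (as observed after the definition of a $P$-ordered collection), while the cover from (2) is already $P$-ordered. The step I expect to be the main obstacle is the point-$<\!\!\kappa$ computation in (1): it is what forces the truncation to $q_1,\dots,q_n$ --- hence the extra factor $\N$ --- and its geometric heart, that the level sets $\{x\in K_p:|f(x)|\ge\delta\}$ are small, rests simultaneously on continuity of $f$ at $\ast$, on the isolation of the points of $X$, and on the $<\!\!\kappa$-compactness furnished by the cover; the remaining verifications (that the listed families are genuinely subbases, and that $e$ is an embedding) are routine but must be carried out with some care.
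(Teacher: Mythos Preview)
Your argument is correct, and in both directions it is a streamlined variant of the paper's proof.

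For (1) the paper builds its almost subbase out of finite-coordinate basic boxes $O(x_1,\ldots,x_m;S_1,\ldots,S_m)$, whereas you use only the one-coordinate half-lines $V^{\pm}_{x,q}$. Your $U$-representation and the density of the values $q+1/(2n-1)$ suffice to recover a subbase, and your point-$<\!\!\kappa$ computation via closed discrete subsets of a $<\!\!\kappa$-compact set is exactly the geometric point the paper isolates (its contradiction with ``$|f(x_\gamma)|>1/n$ on $C_p$''). The minor slip that $q+1/(2n-1)$ need not be rational is harmless: you only need a dense set of thresholds, not all rationals.

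For (2) your route is genuinely more direct than the paper's. The paper first applies Lemma~\ref{f_sep} to pass to an $F$-separating almost subbase (costing a factor $\N$), then indexes its filter space by the auxiliary functions $a_n^V$ (costing another factor $\N$). You instead take $X=\alpha$ itself with the single function $f_V$ of Lemma~\ref{fV} per $V$, and observe that injectivity of $e$ follows already from the subbase property: if $y_1\neq y_2$ then some $V$ or some $Y\setminus U_{2n-1}(V)$ separates them, and in either case $f_V(y_1)\neq f_V(y_2)$ by Lemma~\ref{fV}. This buys you a \emph{$P$-ordered} cover of $X(\Filt)$ by $<\!\!\kappa$-compact sets outright, so the $P\times\N$ in the statement of (2) is not actually needed in your version, and the final sentence is automatic for (2). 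Both approaches are short; yours avoids two auxiliary reductions at the price of leaning a bit harder on Lemma~\ref{fV}.
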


\begin{proof} For claim (1) suppose  $\mathcal{C}=\{C_p: p\in P\}$ is a $P$-ordered cover of $X(\Filt)$ by $<\!\!\kappa$-compact sets.
By Lemma~\ref{as_hered} it will suffice to show that $C_p(X(\Filt), \ast)$ has a $(P\times\N)$-point-$<\!\!\kappa$ almost subbase.

Let $\mathcal{B}=\{B_n: n\in \N\}$ be a countable base of  $\R\setminus \{0\}$ consisting of open intervals such that $B_n\subseteq \R\setminus [-1/n, 1/n]$.
For points $x_1, \ldots, x_m$ in $X(\Filt)$ and open intervals $S_1, \ldots , S_m$ of $\R$ let $O(x_1, \ldots, x_m; S_1, \ldots, S_m)= \{f \in C_p(X(\Filt), \ast) : f(x_i)\in S_i , \text{ for } i=1, \ldots , m\}$.  Then $O(x_1, \ldots, x_m; S_1, \ldots, S_m)$ is a cozero  subset of $C_p(X(p), \ast)$.

For any $p\in P$ and $n \in \N$, let $\mathcal{O}_{p,n}=\{O(x_1, \ldots, x_m; S_1, \ldots, S_m): x_i\in C_p \text{ and each } S_i$ is either one of $B_1, \ldots , B_n$ or $(-\infty,-1/j)$ or $(1/j,\infty)$ for $j=1, \ldots, n\}$.
Note that if $(p,n)$ and $(p',n')$ are in $P \times \N$ and $(p,n) \le (p',n')$ (in other words $p \le p'$ and $n \le n'$) then $\mathcal{O}_{p,n} \subseteq \mathcal{O}_{p',n'}$.
We will verify that every family $\mathcal{O}_{p,n}$ is point-$<\!\!\kappa$, and hence $\mathcal{O}=\bigcup\{\mathcal{O}_{p,n} : p\in P \text{ and } n\in \N\}$ is $(P\times\N)$-point-$<\!\!\kappa$.

 Suppose, for contradiction, $\mathcal{O}_{p,n}$ is not point-$<\!\!\kappa$ for some $p\in P$ and $n\in \N$, in other words there is an $f\in C_p(X(\Filt),\ast)$ and $\{O_{\beta}:\beta<\kappa \}\subseteq \mathcal{O}_{p,n}$ such that $f\in O_\beta$ for each $\beta<\kappa$. Since $\kappa\geq \aleph_0$, there must be a $\kappa$ sized subset $\{x_\gamma: \gamma<\kappa\}\subseteq C_p$ such that $f(x_\gamma)\in B$ for some $B$ from either $B_1,\ldots, B_n$, or $(-\infty,-1/j)$ or $(1/j,\infty)$ with $j=1, \ldots, n$. Hence $|f(x_\gamma)|>1/n$ for each $\gamma<\kappa$ which contradicts the fact that $C_p$ is $<\!\!\kappa$-compact.

We now show that $\mathcal{O}$ is an almost subbase of $C_p(X(\Filt), \ast)$. For any open interval $(a,b)$ in  $\R$, we define a $U$-representation of the interval by $U((a, b))=\{U_n((a, b)): U_{2n}((a,b))=(a+1/2n, b-1/2n)$ and $ U_{2n-1}((a,b))=[a+1/(2n-1), b-1/(2n-1)]\}$. Then define the $U$-representation of $\mathcal{O}$ as $U(O(x_1, \ldots, x_m; S_1, \ldots, S_m))=\{O(x_1,\ldots, x_m; U_n(S_1), \ldots, U_n(S_m)): n\in \N$ with $x_i\in U_n(S_i)$ and $S_i\in \mathcal{B}\cup \{(-\infty,-1/j), (1/j, +\infty): j\in \N\}$ for $i=1, \ldots, m\} $.

Next we establish that $\mathcal{O}\cup\{C_p(X(\Filt),\ast) \setminus U_{2n-1}(V): V\in \mathcal{O}, n\in \N\}$ is a subbase of $C_p(X(\Filt),\ast)$. We take $f\in C_p(X(\Filt),\ast)$ and a basic open neighborhood of $f$, namely, $O(x_1, \ldots, x_k: I_1, \ldots, I_k)$ with $I_1, \ldots, I_k$ being open intervals. Then there exists $p\in P$ such that $x_1,\ldots, x_k\in C_p$. Relist the sets of points $\{x_1, \ldots, x_k\}$ as $\{x_1, \ldots, x_{\ell_1}\}\cup \{y_1, \ldots, y_{\ell_2}\}$ such that $f(x_i)\ne 0$ for $i=1, \ldots, \ell_1$ and $f(y_i)=0$ for $i=1, \ldots, \ell_2$. Relist the intervals $I_1, \ldots , I_k$    as $I_1, \ldots, I_{\ell_1}$ and $\hat{I}_1, \ldots, \hat{I}_{\ell_2}$ with $x_i\in I_{i}$ and $y_j\in \hat{I}_{\ell}$ for $i=1, \ldots, \ell_1$ and $j=1, \ldots, \ell_2$.

We will show we can choose $O$, $O_1, \ldots, O_{2\ell_2}\in \mathcal{O}$ and $N\in \N$ such that $f\in O\cap \bigcap \{C_p(X(\Filt),\ast)\setminus U_{2N-1}(O_{i}): i=1, \ldots,2\ell_2\}\subseteq O(x_1, \ldots, x_k; I_1, \ldots, I_k)$. Choose $N$ big enough such that: (i) there exist $B_{i_1},\ldots, B_{i_{\ell_1}}\in \{B_1, \ldots, B_N\}$ with $x_j\in B_{i_j}\subseteq I_j$ for $j=1, \ldots, \ell_1$;   and (ii) $\R\setminus ((-\infty, -1/N-1/(2N-1)]\cup [1/N+1/(2N-1),\infty))\subseteq \hat{I}_j$ for $j=1, \ldots, \ell_2$. Let $O=O(x_1, \ldots, x_{\ell_1}; B_{i_1}, \ldots, B_{i_{\ell_1}})$, $O_j=O(y_j; (1/N, \infty))$ for $j=1, \ldots, \ell_2$ and $O_j=O(y_j; (-\infty, -1/N))$ for $j=\ell_2+1, \ldots, 2\ell_2$. Then we see that $f\in O\cap \bigcap \{C_p(X(\Filt))\setminus U_{2N-1}(O_{j}): j=1, \ldots,2\ell_2\}\subseteq O(x_1, \ldots, x_k; I_1, \ldots, I_k)$.

\medskip

For claim (2) suppose $Y$ is a space with a $P$-point-$<\!\!\kappa$ almost subbase. Let $Q=P \times \N$. By Lemma~\ref{f_sep} there is an $F$-separating  $Q$-point-$<\!\!\kappa$ almost subbase for $Y$, say $\alpha=\bigcup\{\alpha_q: q \in Q\}$. For each $V\in \alpha$, let $U(V)$ be the corresponding $U$-representation of $V$. We show $Y$ can be embedded in a $C_p(X(\Filt))$ where $X(\Filt)$ has a $(Q\times \N)$-ordered compact cover. Since $\N \te \N \times \N$, we see that $Q \times \N \te P \times \N$. So $X(\Filt)$ has a $(P\times \N)$-ordered compact cover, as desired.

For any $n\in \N$ and $V\in \alpha$ there exists a continuous function $a_n^V: Y\rightarrow \R$ with $a_n^V(y)=1$ for $y\in U_{2n-1}(V)$ and $a_n^V(y)=0$ for $y\in Y\setminus U_{2n}(V)$. By Lemma \ref{fV}, we can assume that $a_n(x)\neq 1$ for all $x\notin U_{2n-1}(V)$. Let $X=\{a_n^V: n\in \N \text{ and } V\in \alpha\}$. Let $C_{q,n}=\{a_i^V: i\le n \text{ and }V\in \alpha_q\}$ for $n\in \N$ and $q\in Q$. Then we can see that, if $(q,n) \le (q',n')$ then $C_{q,n}\subseteq C_{q',n'}$. Then we define a collection $\Filt$ of subsets of $X$ as $F\in\Filt$ if and only if $C_{q,n}\setminus F$ has cardinality $<\!\!\kappa$ for each $q\in Q$ and $n\in \N$. For each $q\in Q$ and $n\in \N$, we can see that $C_{q,n}$ is a $<\!\!\kappa$-compact subset of $X(\Filt)$. Also, we can see that: (i) if $F_1$ and $F_2$ are in $\Filt$, $C_{q,n}\setminus (F_1\cap F_2)=(C_{q,n}\setminus F_1)\cup (C_{q,n}\setminus F_2)$ also has cardinality $<\!\!\kappa$ for each $q$ and $n$, in other words, $F_1\cap F_2$ is in $\Filt$; and (ii) if $F_1\in \Filt$ and $F_1\subseteq F_2$, then $C_{q,n}\setminus F_2$ has size $<\!\!\kappa$ for each $q, n$, so  $F_2$ is in $\Filt$. The space $X(\Filt)$ evidently has a $Q\times\N$-ordered cover, $\{C_{q,n}: q\in Q, n\in \N\}$, of $<\!\!\kappa$-compact sets. It remains to show that $Y$ can be embedded in $C_p(X(\Filt))$.


We define a mapping $H: Y\rightarrow C_p(X(\Filt))$ by $H(y)(a_{n}^{V})=a_{n}^{V}(y)$ for each $y\in Y$. Since $\alpha_q$ is point-$<\!\!\kappa$ for each $q\in Q$, $H(y)(a_n^V)\neq 0$ only at $<\!\!\kappa$ many $a_n^V$ for each $q\in Q$ and $n\in \N$. So $H(y)$ is continuous on the space $X(\Filt)$ for each $y\in Y$. Hence $H$ is a well-defined map from $Y$ into $C_p(X(\Filt))$.

We will show that the mapping $H$ is injective, continuous, and open onto its image. We take distinct $y_1, y_2\in Y$. Since $\alpha$ is $F$-separating (interchanging $y_1$ and $y_2$ if necessary) there exists a $V\in \alpha$ such that $y_1\in V$ and $y_2\notin \cl{V}$. Then there exists $N\in \N$ such that $x\in U_{2N-1}(V)$, and $H(y_1)\neq H(y_2)$ because $H(y_1)(a_{N}^{V})=1$ while $H(y_2)(a_{N}^{V})=0$. Thus the mapping $H$ is injective.

Choose a set of open intervals $\{I_1, \ldots, I_k\}$ in the real line $\R$. The set $O=O(a_{n_1}^{V_1}, \ldots, a_{n_k}^{V_k}; I_1, \ldots, I_k)$ is a basic open neighborhood of $C_p(X(\Filt))$. Then $H^{-1}(O)=\bigcap\{(a_{n_i}^{V_i})^{-1}(I_i): i=1, \dots, k\}$. By the continuity of $a_{n_i}^{V_i}$ for $i=1, \ldots, k$, $H^{-1}(O)$ is an open subset of $Y$. Therefore, the mapping $H$ is continuous.

Since the mapping $H$ is injective and $\alpha$ is an almost subbase, to show $H$ is onto its image it is enough to show that $H(V)$ and $H(Y\setminus U_{2n-1}(V))$ are open for every $V\in \alpha$ and $n\in \N$. By the definition of $H$, we can see that, for each $n\in\N, V\in \alpha$ and for each $y\in Y$,
$H(y)(a_n^V)>0\Leftrightarrow a_n^V(y)>0\ \text{and hence} \ y\in U_{2n}(V)\subset V\quad (\star)$, while $H(y)(a_n^V)<1\Leftrightarrow a_n^V(y)<1\ \text{and thus} \  y\notin U_{2n-1}(V)\quad (\ast)$.

Take any $\vy\in H(V)$. Let $y=H^{-1}(\vy)$. Choose $N$ such that $y\in U_{2N-1}(V)$, then $a_{N}(y)=1$. Let $W=O(a_{N}^{V};(1/2, +\infty))\cap H(Y)$ which is open in $H(Y)$. For each $\vz\in W$, let $z=H^{-1}(\vz)$. We see that $H(z)(a_N^V)>1/2$, so $z\in V$ by $(\star)$. Hence $H(z)\in H(V)$. Therefore, we have $\vy\in W\subset H(V)$. So  $H(V)$ is an open set in $H(Y)$.

Finally, take any $\vy\in H(Y\setminus U_{2n-1}(V))$. Let $y=H^{-1}(\vy)$, and note $a_{n}^{V}(y)<1$. Let $W=O(a_{n-1}^V;(-\infty, 1))\cap H(Y)$ which is also open in $H(Y)$. For each $\vz\in W$, let $z=H^{-1}(\vz)$. We see that $H(z)(a_{n}^V)<1$, so $z\notin U_{2n-1}(V)$ by $(\ast)$. Hence $H(z)\in H(Y\setminus U_{2n-1}(V))$. Therefore,  $\vy\in W\subset H(X\setminus U_{2n-1}(V))$. So $H(X\setminus U_{2n-1}(V))$ is also open in $H(Y)$, and the proof is complete.
\end{proof}

\subsection*{Almost Subbases give Bases}

Let $\mathcal{C}$ be a family of sets. For any $x$, write $(\mathcal{C})_x=\{C\in\mathcal{C}: x\in C\}$. The collection $\mathcal{C}$ is said to be $<\!\!\kappa$-Noetherian if every subcollection of $\mathcal{C}$ which is well-ordered by $\subseteq$ has the cardinality $<\!\!\kappa$. The family is said to be additively $<\!\!\kappa$-Noetherian if the collection of all unions of members of the family is $<\!\!\kappa$-Noetherian. We extend the notation to say that $\mathcal{C}$ is $\kappa$-Noetherian (resp. additively $\kappa$-Noetherian) if $\mathcal{C}$ is $<\!\!\kappa^+$-Noetherian (resp. additively $<\!\!\kappa^+$-Noetherian). Instead of (additively) $<\!\!\aleph_0$-Noetherian we simply say (additively) Noetherian.

\begin{lemma}\label{Noeth} Let $\mathcal{C}$ be a family of sets. The following are equivalent:

(1) $\mathcal{C}$ is additively $<\!\!\kappa$-Noetherian,

(2) the collection of all $<\!\!\kappa$ unions of members of $\mathcal{C}$ is $<\!\!\kappa$-Noetherian,

(3) every $\geq \kappa$ subcollection, say $\mathcal{C}_2$, of $\mathcal{C}$ contains a $\kappa$ sized subcollection $\mathcal{C}_1$ with a $<\!\!\kappa$ sized subcollection $\mathcal{C}_0$ such that $\bigcup \mathcal{C}_0 \supseteq \bigcup \mathcal{C}_1$, and

(4) it is not possible to find for all $\alpha < \kappa$, sets $C_\alpha$ from $\mathcal{C}$ and points $x_\alpha$, such that for all $\beta < \alpha < \kappa$ the point $x_\alpha$ is in $C_\alpha$ but not in $C_\beta$.
\end{lemma}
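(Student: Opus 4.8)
The plan is to prove Lemma~\ref{Noeth} by establishing the cycle of implications $(1) \Rightarrow (2) \Rightarrow (4) \Rightarrow (3) \Rightarrow (1)$, with a couple of the steps being nearly immediate and the implication $(2) \Rightarrow (4)$ carrying the main combinatorial content. Throughout, write $\mathcal{U}(\mathcal{C})$ for the collection of all unions of members of $\mathcal{C}$, and $\mathcal{U}_{<\kappa}(\mathcal{C})$ for the collection of all unions of $<\!\!\kappa$-many members; so (1) says $\mathcal{U}(\mathcal{C})$ is $<\!\!\kappa$-Noetherian and (2) says $\mathcal{U}_{<\kappa}(\mathcal{C})$ is $<\!\!\kappa$-Noetherian.

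First I would dispose of $(1) \Rightarrow (2)$: since $\mathcal{U}_{<\kappa}(\mathcal{C}) \subseteq \mathcal{U}(\mathcal{C})$ and a subcollection of a $<\!\!\kappa$-Noetherian family is again $<\!\!\kappa$-Noetherian (any well-ordered-by-$\subseteq$ subchain of the smaller family is one of the larger), this is trivial. Next, $(3) \Rightarrow (1)$: suppose $\mathcal{U}(\mathcal{C})$ is not $<\!\!\kappa$-Noetherian, so there is a subcollection of $\mathcal{U}(\mathcal{C})$ well-ordered by $\subseteq$ of size $\ge \kappa$; from it extract a strictly $\subseteq$-increasing $\kappa$-chain $\{D_\alpha : \alpha < \kappa\}$ with each $D_\alpha = \bigcup \mathcal{E}_\alpha$ for some $\mathcal{E}_\alpha \subseteq \mathcal{C}$. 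For each $\alpha < \kappa$ pick a point $x_\alpha \in D_{\alpha+1} \setminus D_\alpha$ and then a set $C_\alpha \in \mathcal{E}_{\alpha+1} \subseteq \mathcal{C}$ with $x_\alpha \in C_\alpha$; note $C_\alpha \subseteq D_{\alpha+1}$. Apply (3) to $\mathcal{C}_2 = \{C_\alpha : \alpha < \kappa\}$ (discard repeats if this has size $<\!\!\kappa$, in which case the $x_\alpha$ would already witness failure — one should check the $C_\alpha$ can be taken distinct, or argue directly that a $<\!\!\kappa$-sized subfamily whose union contains all the $x_\alpha$ is impossible since the $x_\alpha$ form a $\kappa$-sized "independent" set over the increasing chain): we get $\mathcal{C}_1$ of size $\kappa$ and $\mathcal{C}_0 \subseteq \mathcal{C}_1$ of size $<\!\!\kappa$ with $\bigcup \mathcal{C}_0 \supseteq \bigcup \mathcal{C}_1$. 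Since $|\mathcal{C}_0| < \kappa = \mathrm{cf}$ of the chain in the relevant regular-cardinal sense (or simply: each element of $\mathcal{C}_0$ lies inside some $D_{\beta}$ with $\beta < \kappa$, and $<\!\!\kappa$-many such bounds $\beta$ have a strict upper bound $\gamma < \kappa$ when $\kappa$ is regular — and if $\kappa$ is singular one truncates to a regular cofinal segment first), $\bigcup \mathcal{C}_0 \subseteq D_\gamma$ for some $\gamma < \kappa$, yet $x_\gamma \in C_\gamma \subseteq \bigcup\mathcal{C}_1 \subseteq \bigcup \mathcal{C}_0 \subseteq D_\gamma$, contradicting $x_\gamma \notin D_\gamma$. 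The implication $(4) \Rightarrow (3)$ is the contrapositive packaging: if (3) fails, take $\mathcal{C}_2 = \{C_\alpha : \alpha < \kappa\} \subseteq \mathcal{C}$ witnessing the failure; for each $\alpha$, since no $<\!\!\kappa$-sized subfamily of any $\kappa$-sized subfamily of $\mathcal{C}_2$ covers it, one can recursively choose $x_\alpha \in C_\alpha \setminus \bigcup_{\beta < \alpha} C_\beta$ (the set being non-empty at each stage because otherwise $\{C_\beta : \beta<\alpha\}$, if $\alpha\ge$ relevant size, or a padding thereof, would be a small cover), giving exactly the configuration forbidden by (4).

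The crux is $(2) \Rightarrow (4)$. Assume (4) fails, so we have sets $C_\alpha \in \mathcal{C}$ and points $x_\alpha$ for $\alpha < \kappa$ with $x_\alpha \in C_\alpha$ and $x_\alpha \notin C_\beta$ for all $\beta < \alpha$. For $\alpha < \kappa$ define $D_\alpha = \bigcup_{\beta < \alpha} C_\beta \in \mathcal{U}_{<\kappa}(\mathcal{C})$ (a union of $<\!\!\kappa$-many members of $\mathcal{C}$). I claim $\{D_\alpha : \alpha < \kappa\}$ is a strictly $\subseteq$-increasing chain: it is increasing by construction, and it is strict because $x_\alpha \in D_{\alpha+1} \setminus D_\alpha$ — indeed $x_\alpha \in C_\alpha \subseteq D_{\alpha+1}$, while $x_\alpha \notin C_\beta$ for every $\beta < \alpha$ gives $x_\alpha \notin D_\alpha$. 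Thus $\mathcal{U}_{<\kappa}(\mathcal{C})$ contains a $\subseteq$-well-ordered subfamily of size $\kappa$, so it is not $<\!\!\kappa$-Noetherian, i.e. (2) fails. This closes the cycle. I expect the only genuine friction to be bookkeeping around whether $\kappa$ is regular or singular in the $(3)\Rightarrow(1)$ step — the cleanest fix is to note that "$<\!\!\kappa$-Noetherian" only ever needs to be tested against well-ordered chains, and a well-ordered chain of size $\ge\kappa$ contains one of order type exactly $\mathrm{cf}(\kappa)$ cofinally together with one of type $\kappa$; working with the order type $\kappa$ chain and the observation that a $<\!\!\kappa$-sized set of ordinals below $\kappa$ need not be bounded only when $\kappa$ is singular, in which case one instead extracts, before everything, a strictly increasing chain indexed by a regular cardinal $\le \kappa$ still of size $\kappa$ — which is automatic since a well-ordered chain of size $\kappa$ has order type $\ge \kappa$ and hence contains a closed cofinal copy of $\mathrm{cf}(\kappa)$, but actually the definition as stated ("well-ordered by $\subseteq$ has cardinality $<\!\!\kappa$") makes "$\ge\kappa$" and "contains a strictly increasing $\kappa$-sequence" interchangeable after passing to a subfamily of order type $\kappa$. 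I would state this normalization once at the start and then not belabor it.
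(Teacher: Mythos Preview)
Your cycle $(1)\Rightarrow(2)\Rightarrow(4)\Rightarrow(3)\Rightarrow(1)$ reverses the paper's $(1)\Rightarrow(2)\Rightarrow(3)\Rightarrow(4)\Rightarrow(1)$. Your $(1)\Rightarrow(2)$ is the same as the paper's, and your $(2)\Rightarrow(4)$ (via the strictly increasing partial unions $D_\alpha=\bigcup_{\beta<\alpha}C_\beta$) is correct and is essentially the content of the paper's $\neg(1)\Rightarrow\neg(4)$ step.

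There is, however, a genuine gap in your $(4)\Rightarrow(3)$. You fix an enumeration $\mathcal{C}_2=\{C_\alpha:\alpha<\kappa\}$ in advance and then try to pick $x_\alpha\in C_\alpha\setminus\bigcup_{\beta<\alpha}C_\beta$, but nothing prevents, say, $C_1\subseteq C_0$. Your parenthetical justification does not rescue this: $\neg(3)$ only forbids a $<\!\kappa$-sized subfamily from covering a \emph{$\kappa$-sized} subfamily, not a single $C_\alpha$, so ``$\{C_\beta:\beta<\alpha\}$ would be a small cover'' is not the contradiction you need. The repair is to choose both $C_\alpha$ and $x_\alpha$ recursively: fix any $\kappa$-sized $\mathcal{C}_1\subseteq\mathcal{C}_2$; at stage $\alpha$ the family $\{C_\beta:\beta<\alpha\}\subseteq\mathcal{C}_1$ has size $<\!\kappa$, hence by $\neg(3)$ fails to cover $\bigcup\mathcal{C}_1$; pick $x_\alpha$ in the uncovered part and then $C_\alpha\in\mathcal{C}_1$ with $x_\alpha\in C_\alpha$. (The paper argues the reverse implication $\neg(4)\Rightarrow\neg(3)$ instead, by taking the witnessing $\{(C_\alpha,x_\alpha)\}$ and observing that for any $<\!\kappa$-sized index set $S_0$ one can find $\alpha_0>\sup S_0$ with $x_{\alpha_0}\notin\bigcup_{\beta\in S_0}C_\beta$.)

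Your $(3)\Rightarrow(1)$ is correct for regular $\kappa$, and in fact your $C_\alpha$ are automatically distinct (if $\beta<\alpha$ then $C_\beta\subseteq D_{\beta+1}\subseteq D_\alpha$, while $x_\alpha\in C_\alpha\setminus D_\alpha$), so the worry about repeats is unnecessary. The paper takes the shorter route $(2)\Rightarrow(3)$: given $\mathcal{C}_1=\{C_\alpha:\alpha<\kappa\}$, the partial unions $\hat C_\alpha=\bigcup_{\beta\le\alpha}C_\beta$ lie in $\mathcal{U}_{<\kappa}(\mathcal{C})$ and form a weakly increasing chain with $<\!\kappa$ distinct values by (2), hence stabilise at some $\alpha_0$, and $\{C_\beta:\beta\le\alpha_0\}$ is the desired small cover. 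This is cleaner than your $(3)\Rightarrow(1)$, though both routes tacitly use regularity of $\kappa$ at the same point (you to bound $\sup S_0$ below $\kappa$, the paper to conclude the chain stabilises), so neither actually eliminates the bookkeeping you flag.
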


\begin{proof}
It is easy to verify that (1) implies (2).

We show (2) implies (3). Take any  subcollection $\mathcal{C}_2$ of $\mathcal{C}$ of size at least $\kappa$. Take any (exactly) $\kappa$-sized subcollection  $\mathcal{C}_1=\{C_\alpha : \alpha < \kappa\}$ of $\mathcal{C}_2$. For each $\alpha$ in $\kappa$, let $\hat{C}_\alpha = \bigcup \{ C_\beta : \beta \le \alpha\}$. Then each $\hat{C}_\alpha$ is a $<\!\!\kappa$ union of elements of $\mathcal{C}$, and if $\alpha' < \alpha$ then $\hat{C}_{\alpha'} \subseteq \hat{C}_{\alpha}$. So by (2), there must be an $\alpha_0$ such that for all $\alpha \ge \alpha_0$ we have $\hat{C}_{\alpha_0} = \hat{C}_{\alpha}$. Thus $\mathcal{C}_0=\{ C_\beta : \beta \le \alpha_0\}$ is a $<\!\!\kappa$ subcollection of $\mathcal{C}_1$ such that  $\bigcup \mathcal{C}_0 \supseteq \bigcup \mathcal{C}_1$, as required for (3).

Next we show that $\neg$(4) implies $\neg$(3). We know that there is a family of elements of $\mathcal{C}$ and points, $\{ (C_\alpha, x_\alpha) : \alpha \in \kappa\}$ as in the statement of (4). Let $\mathcal{C}_2=\{C_\alpha : \alpha \in \kappa\}$. Take any $\kappa$ sized subcollection $\mathcal{C}_1$ of $\mathcal{C}_2$. Write $\mathcal{C}_1=\{C_\alpha : \alpha \in S_1\}$ where $S_1$ is some $\kappa$ sized subset of $\kappa$. For any $\kappa$ sized subset $S_0$ of $S_1$, pick an $\alpha_0$ in $S_1$ such that $\alpha_0 > \max S_0$. Then $x_{\alpha_0} \notin \bigcup \{ C_\beta : \beta \in S_0\}$. Thus the negation of (3) holds.

Finally we show $\neg$(1) implies $\neg$(4). Suppose we have a $\kappa$ sized subfamily $\mathcal{C}_1$ of $\mathcal{C}$ which is well-ordered by (strict) inclusion. By transfinite induction we can easily find, for each $\alpha < \kappa$, a $C_\alpha$ in $\mathcal{C}_1$ such that $D_\alpha=C_\alpha \setminus \bigcup \{ C_\beta : \beta < \alpha\} \ne \emptyset$. Picking $x_\alpha \in D_\alpha$, for each $\alpha$ in $\kappa$, yields the sets $C_\alpha$ and points $x_\alpha$ required to negate (4).
\end{proof}

\begin{theorem}\label{as_b} Every productively $<\!\!\kappa$-compact space with a $P$-point-$<\!\!\kappa$ almost subbase has a $(P\times \N)$-point additively $<\!\!\kappa$-Noetherian base.
\end{theorem}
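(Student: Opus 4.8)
The plan is to take as the base the family $\mathcal B$ of all finite intersections of members of a subbase built from the almost subbase, to order $\mathcal B$ by $P\times\N$ according to ``index of the underlying sets, number of factors, and size of the levels'', and then to verify the additively $<\!\!\kappa$-Noetherian condition one point at a time via a cluster-point argument, which is where productive $<\!\!\kappa$-compactness enters.

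\textbf{Setting up the base.} First, by Lemma~\ref{f_sep} (and since $\N^{k}\te\N$) I may replace $\alpha$ by an $F$-separating almost subbase at the cost of replacing $P$ by $Q=P\times\N$; a useful by-product is that $\alpha$ then contains the truncations $f_V^{-1}(r,1]$, $r\in\Q\cap[0,1]$, whose small closures are needed later. Let $\mathcal S=\alpha\cup\{X\setminus U_{2n-1}(V):V\in\alpha,\ n\in\N\}$ be the associated subbase, and recall from Lemma~\ref{fV} and the definition of almost subbase that $X\setminus U_{2n-1}(V)=f_V^{-1}[0,1/(2n-1))$ is cozero, that $X\setminus U_{2n}(V)$ is a zero set, and that $X\setminus U_{2n+1}(V)\subseteq X\setminus U_{2n}(V)\subseteq X\setminus U_{2n-1}(V)$ with $\cl{X\setminus U_{2n+1}(V)}\subseteq X\setminus U_{2n}(V)$. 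Take $\mathcal B$ to be the collection of all finite intersections of members of $\mathcal S$, a base for $X$, and for $(q,N)\in Q\times\N$ let $\mathcal B_{q,N}$ consist of those $B\in\mathcal B$ expressible as an intersection of at most $N$ members of $\mathcal S$, each of the form $V$ or $X\setminus U_{2n-1}(V)$ with $V\in\alpha_q$ and $n\le N$. Since any finite subfamily of $\mathcal S$ uses only finitely many underlying sets and indices, $\mathcal B=\bigcup\{\mathcal B_{q,N}:(q,N)\in Q\times\N\}$ is a $(Q\times\N)$-ordering of a base; as $Q\times\N\te P\times\N$, it suffices to show that each $\mathcal B_{q,N}$ is point additively $<\!\!\kappa$-Noetherian.

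\textbf{Reducing a putative failure.} Fix $(q,N)$ and a point $x$, and suppose $(\mathcal B_{q,N})_x$ is not additively $<\!\!\kappa$-Noetherian; by Lemma~\ref{Noeth}(4) there are $B_\alpha\in(\mathcal B_{q,N})_x$ and points $y_\alpha$ ($\alpha<\kappa$) with $y_\alpha\in B_\alpha\setminus\bigcup_{\beta<\alpha}B_\beta$, and we may take $\kappa$ regular (the cases $\aleph_0,\aleph_1$ used later being such). Write $B_\alpha=P_\alpha\cap\bigcap\{X\setminus U_{2n-1}(W):(W,n)\in E_\alpha\}$ with $P_\alpha$ the intersection of the positive factors and $E_\alpha$ a set of at most $N$ pairs, levels $\le N$. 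From $x\in B_\alpha$, every positive factor lies in $(\alpha_q)_x$, and every \emph{near} negative factor — one with $x\in W$ — has $W\in(\alpha_q)_x$; since $\alpha_q$ is point-$<\!\!\kappa$ there are only $<\!\!\kappa$ possibilities for the positive part and for the ``near'' negative intersection, and finitely many for the remaining combinatorial shape, so a pigeonhole over a $\kappa$-sized index set (here regularity of $\kappa$ is used) lets me assume $B_\alpha=B_0\cap G_\alpha$, where $B_0$ is a fixed open set containing $x$ and all the $y_\alpha$, and $G_\alpha=\bigcap_{i=1}^{m}\bigl(X\setminus U_{2n_i-1}(W_i^\alpha)\bigr)$ with each $W_i^\alpha$ \emph{remote} ($x\notin W_i^\alpha$) and the levels $n_i\le N$ depending only on $i$.

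\textbf{The cluster-point argument, and where productivity is used.} The remote pairs $(W_i^\alpha,n_i)$ with $n_i\le N$ can be encoded — via the Mercourakis--Sokolov style representation underlying Theorem~\ref{as_is_embed}(2) — as points of one of the $<\!\!\kappa$-compact (in the Eberlein, Talagrand and Gul'ko cases, compact) ``coordinate'' sets $C_{q,N}$; collecting them gives $\bar a_\alpha$ in a suitable $<\!\!\kappa$-compact auxiliary space $Z$ of remote data. The crucial point is that $X\times Z$ is then $<\!\!\kappa$-compact \emph{because $X$ is productively $<\!\!\kappa$-compact}, so (as $\kappa$ is regular) the net $\alpha\mapsto(y_\alpha,\bar a_\alpha)$ has a cluster point $(z,\bar a)$. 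If $z=x$ then each $B_\beta$ is a neighbourhood of $z$ meeting $\{y_\alpha\}$ cofinally, contradicting $y_\alpha\notin B_\beta$ for $\alpha>\beta$; hence $z\ne x$. For each fixed $\beta$, cofinally many $\alpha$ satisfy $y_\alpha\in B_0\setminus B_\beta$, so $f_{W_i^\beta}(y_\alpha)\ge 1/(2n_i-1)$ for some $i=i(\beta)$ and cofinally many such $\alpha$; by continuity $f_{W_{i(\beta)}^\beta}(z)\ge 1/(2n_{i(\beta)}-1)>0$, whence $W_{i(\beta)}^\beta\in(\alpha_q)_z$. As $|(\alpha_q)_z|<\kappa$ while $\beta$ ranges over a $\kappa$-set, some remote $W^{\ast}$ at some level $n^{\ast}$ recurs cofinally often; then $B_\beta\subseteq X\setminus U_{2n^{\ast}-1}(W^{\ast})$ for these $\beta$, so the corresponding $y_\beta$ lie in $X\setminus U_{2n^{\ast}-1}(W^{\ast})$, whereas $z\in U_{2n^{\ast}-1}(W^{\ast})\subseteq U_{2n^{\ast}}(W^{\ast})$. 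Feeding this into the $F$-separation of $x\ne z$ together with the zero/cozero alternation of the $U$-representations (the even-level sets $U_{2n}$ and the truncations now present in $\alpha$, which convert the merely closed containments into a genuine open separation between $z$ and the $y_\beta$) contradicts $z$ being a cluster point of those $y_\beta$. This closes the plan.

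I expect the main obstacle to be exactly this last step. One must take the cluster point only \emph{after} all the finitely many pigeonholes, which is why the remote-factor data has to be tracked simultaneously with $y_\alpha$ inside a single $<\!\!\kappa$-compact space — and that is precisely what forces the hypothesis ``\emph{productively} $<\!\!\kappa$-compact'' rather than merely ``$<\!\!\kappa$-compact''. And one must upgrade the non-strict inequalities coming from the closed sets $U_{2n-1}(W^{\ast})$ to a genuine topological separation of $z$ from the tail of the $y_\beta$, for which the even-indexed cozero sets $U_{2n}(W^{\ast})$ and the $F$-separating truncations in $\alpha$ must be used carefully. (As in Theorem~\ref{as_is_embed}, when $P$ is not countably directed the factor $\N$ can be absorbed, improving ``$P\times\N$'' to ``$P$''.)
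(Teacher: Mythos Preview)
Your setup --- forming the base $\mathcal B$ of all finite intersections from the subbase $\mathcal S=\alpha\cup\{X\setminus U_{2n-1}(V)\}$ and $(P\times\N)$-ordering it by bounding the number of factors, the level indices, and the $p$ from which the underlying $V$'s are drawn --- is exactly what the paper does. The divergence is entirely in how the point additively $<\!\!\kappa$-Noetherian condition is verified for each piece $\mathcal B_{p,m}$.

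The paper argues via condition~(3) of Lemma~\ref{Noeth}, not condition~(4), and never takes a cluster point. After the pigeonhole to a fixed positive part $\alpha'$ and a fixed number $k$ of negative factors, so that each $B\in\mathcal B_1$ has the shape $B=\bigcap\alpha'\cap\bigcap_{i=1}^k(X\setminus A(B,i))$ with each $A(B,i)$ a closed member of some $U(\alpha_p)$, the paper passes to the finite power $X^k$ --- and this is the sole use of productive $<\!\!\kappa$-compactness. Because every $A(B,i)=U_{2n-1}(V)$ sits inside some $V\in\alpha_p$, the family $\{A(B,i):B\in\mathcal B_1\}$ is point-$<\!\!\kappa$ at every point of $X$ (not just at $x$), and hence so is the family of closed products $\mathcal C=\{\prod_{i=1}^kA(B,i):B\in\mathcal B_1\}$ in $X^k$. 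A $\kappa$-sized point-$<\!\!\kappa$ family has empty intersection, and $<\!\!\kappa$-compactness of $X^k$ then yields a $<\!\!\kappa$-sized $\mathcal C_0\subseteq\mathcal C$ with empty intersection; the corresponding $\mathcal B_0$ witnesses~(3). There is no auxiliary encoding space, no $F$-separation (Lemma~\ref{f_sep} is not invoked), no near/remote dichotomy, no appeal to the embedding of Theorem~\ref{as_is_embed}, and no regularity assumption on $\kappa$.

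Your cluster-point route, by contrast, has two genuine gaps that you yourself flag. First, the auxiliary space $Z$ in which the ``remote data'' $\bar a_\alpha$ is to live is not actually constructed: the sets $C_{q,N}$ of Theorem~\ref{as_is_embed}(2) are discrete subspaces of $X(\Filt)$ and are $<\!\!\kappa$-compact only after adjoining $\ast$, and a cluster coordinate equal to $\ast$ carries no information about any $W^*$. Second, and more seriously, your final contradiction does not close. You obtain $z\in U_{2n^*-1}(W^*)$ while a \emph{cofinal} set of $y_\beta$ lie in the open set $X\setminus U_{2n^*-1}(W^*)$; but $z$ is a cluster point of the \emph{full} net $(y_\alpha)_{\alpha<\kappa}$, not of that cofinal subnet, so an open neighborhood of $z$ missing all those $y_\beta$ can still meet cofinally many $y_\alpha$. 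Upgrading closed levels to open ones via $U_{2n}$ does not address this, because the obstruction is the passage to a subnet, not the strictness of an inequality. The paper's direct product argument in $X^k$ avoids all of this machinery.
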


\begin{proof}
Let $X$ be a productively $<\!\!\kappa$-compact space which has a $P$-point-$<\!\!\kappa$ almost subbase $\alpha$. So $\alpha=\bigcup\{\alpha_p: p\in P\}$ where each $\alpha_p$ is point-$<\!\!\kappa$, $\alpha_p\subseteq \alpha_{p'}$ if $p\leq p'$, and the collection $\mathcal{S}=\alpha \cup \{ X \setminus U_{2n-1}(V) : V \in \alpha, \ n \in \N\}$ is a subbase for $X$. Let $\mathcal{B}$ be the base for $X$ consisting of all finite non-empty intersections of members of $\mathcal{S}$.   Then naturally we write  $\mathcal{B}=\bigcup\{\mathcal{B}_{p,m}: m\in \N \text{ and }p\in P\}$ where $B$ is is $\mathcal{B}_{p,m}$ if and only if $B=\bigcap_{i=1}^{k_1} V_i'\cap \bigcap_{j=1}^{k_2} \left(X\setminus U_{2n_j-1}(V_j)\right)$
 where $k_1, k_2 \in \N$, each
$k_1, k_2, n_j\leq m$, and each $V_i'$ and  $V_j$ is in $\alpha_p$. Clearly  $\mathcal{B}$ is $(P\times \N)$-ordered.

We show that $\mathcal{B}_{p,m}$ is point additively $<\!\!\kappa$-Noetherian.
To this end, fix $x$ in $X$ and any $\ge \kappa$ sized subset $\mathcal{B}'$ of $(\mathcal{B}_{p,m})_x$. Take any subcollection $\mathcal{B}_1$ with exactly $\kappa$ many elements. Since $\alpha_p$ is point-$<\!\!\kappa$,  we can suppose there is a fixed finite subset $\mathcal{\alpha}'$ of $(\alpha_p)_x$ and a fixed $k\in\N$ such that every $B$ in $\mathcal{B}$ has the form
\[ B = \bigcap \alpha' \cap \bigcap_{i=1}^k (X \setminus A(B,i)),\]
where each $A(B,i)$ is a closed member of $U(\alpha_p)$.

Since $X$ is productively $<\!\!\kappa$-compact, the space $X^k$ is $<\!\!\kappa$-compact. We will show there is a $<\!\!\kappa$ sized collection $\mathcal{B}_0$ contained in $\mathcal{B}_1$ such that $\bigcup \mathcal{B}_0 \supseteq \bigcup \mathcal{B}_1$, thereby verifying that $\mathcal{B}_{p,m}$ satisfies claim~(3) of the preceeding lemma.

By  definition of $\mathcal{B}_{p,m}$, the collection $\{A(B, i): B\in \mathcal{B}_1\}$ is point-$<\!\!\kappa$ for each $i=1, 2, \ldots, k$. Then the collection $\mathcal{C}=\{\prod_{i=1}^k A(B, i): B\in\mathcal{B}_1\}$ is a point-$<\!\!\kappa$ family of closed subsets in $X^k$.  Since $\mathcal{B}_1$ has cardinality $\kappa$, the collection $\mathcal{C}$ also has cardinality $\kappa$, hence it must have empty intersection. By $<\!\!\kappa$-compactness of $X^k$, $\mathcal{C}$ can not have the $\kappa$ Intersection Property. So there is a $<\!\!\kappa$ sized subcollection $\mathcal{C}_0$ of $\mathcal{C}$ such that $\bigcap \mathcal{C}_0 = \emptyset$. For each $C\in \mathcal{C}_0$, choose $B_C$ from $\mathcal{B}_1$ such that $C=\prod_{i=1}^k A(B_C, i)$. Then let $\mathcal{B}_0=\{B_C: C\in \mathcal{C}_0\}$. Hence $\mathcal{B}_0$ has cardinality $<\!\!\kappa$.

Take any $y$ in some element of $\mathcal{B}_1$, say $B'= \bigcap \alpha' \cap \bigcap_{i=1}^k (X \setminus A(B',i))$. Then the vector $(x_1, x_2, \ldots, x_k)$ with $x_i=y$ for each $i=1,2, \ldots, k$ is not in $C^{\star}$ for some $C^{\star}\in\mathcal{C}_0$. Hence $y\notin A(B_{C^\star}, i)$ for each $i=1,2,\ldots, k$,  and so we have $y\in B_{C^\star}$. Thus $\bigcup \mathcal{B}_0 \supseteq \bigcup \mathcal{B}_1$, as required.
\end{proof}

\subsection*{Expandable Networks and Point Networks}

Let $X$ be a space. An expandable network for $X$ is a collection $\mathcal{N}$ of pairs of subsets $(N,V)$ of $X$, where $N \subseteq V$ and $V$ is open such that if a point $x$ of $X$ is in an open set $U$ then there is an element $(N,V)$ of $\mathcal{N}$ such that $x \in N \subseteq U$. Clearly if $\mathcal{N}$ is an expandable network for $X$, then $\mathcal{N}' = \{ N : (N,V) \in \mathcal{N}\}$ is a (standard) network for $X$, and if $\mathcal{N}'$ is a network, then $\mathcal{N} = \{(N,X) : N \in \mathcal{N}'\}$ is an expandable network. The interest in expandable networks (as with almost subbases) comes when they are structured as a $P$-$\mathcal{Q}$ family.

Extending our previous notation and definition, let $\mathcal{P}$ be a family of pairs of subsets of a set $X$, then for any $x$, write $(\mathcal{P})_x=\{(P_1,P_2)\in\mathcal{P}: x\in P_2\}$, and we say that $\mathcal{P}$ is point-$<\!\!\kappa$ if for every $x$ in $X$ the set $(\mathcal{P})_x$ has size $<\!\!\kappa$. Dow, Junnila and Pelant \cite{DJP} showed that a compact space is Eberlein compact if and only if it has a $\omega$-point finite expandable network.

Point networks are the natural local version of expandable networks.
A point network (respectively, strong point network) for $X$ is a collection $\mathcal{W} = \{ \mathcal{W}(x) : x \in X\}$ where each $\mathcal{W}(x)$ is a collection of subsets of $X$ containing $x$ such that if for every point $x$  of $X$ contained in an open set $U$  there is an open $V$ containing $x$ and contained in  $U$ such that for every $y$ in $V$ there is a $W \in \mathcal{W}(y)$ such that $x \in W \subseteq U$ (respectively, $x \in W \subseteq V$).
We note that  we can take $U$ and $V$ to be basic. Point networks are also known as `condition (F)', and as the `Collins--Roscoe structuring mechanism' after the authors who introduced them \cite{CR}. The term `point network' was suggested by Gruenhage.

If $\mathcal{Q}$ is a property that can be reasonably applied to a family of subsets of a space then we say a space $X$ has  a $\mathcal{Q}$ (strong) point network if it has a (strong) point network $\mathcal{W}$  such that for each $\mathcal{W}(x)$ the property $\mathcal{Q}$ holds.

Let $\mathcal{W}$ be a (strong) point network  for a space $X$.  We can further structure the (strong) point network $\mathcal{W}$ analogously to a $P$-$\mathcal{Q}$ expandable networks. Formally, let $P$ be a directed set, and $\mathcal{Q}$ be as above. We say that  $\mathcal{W}$ for $X$ is a $P$-$\mathcal{Q}$ (strong) point network if for each  point $x$ in $X$, we can write $\mathcal{W}(x)=\bigcup_{p \in P} \mathcal{W}_p(x)$, where every $\mathcal{W}_p(x)$ has property $\mathcal{Q}$, if $p \le p'$ then $\mathcal{W}_p(x) \subseteq \mathcal{W}_{p'} (x)$, and if some point $x$ is in an open set $U$, then there is an open set $V=V(x,U)$ containing $x$ and contained in $U$ and a $p=p(x,U)$ from $P$ such that if $y \in V$ then there is a $W \in \mathcal{W}_p(y)$ such that $x \in W \subseteq U$ (respectively, $x \in W \subseteq V$).  Note that a `$1$-$\mathcal{Q}$ (strong) point network' is exactly the  same as a `$\mathcal{Q}$ (strong) point network' from above.

\subsection*{Bases give Expandable Networks, give Strong Point Networks}

The proof of the next lemma demonstrating how $P$-$\mathcal{Q}$ bases give $P$-$\mathcal{Q}$ expandable networks can safely be left to the reader.
\begin{lemma}\label{b_en}
Let $\mathcal{B}$ be a base for a space $X$. Then $\mathcal{N}=\{(B,B) : B \in \mathcal{B}\}$ is an expandable  network for $X$.

Further, if $\mathcal{B} = \bigcup_{p \in P} \mathcal{B}_p$, where $\mathcal{B}_p \subseteq \mathcal{B}_{p'}$ if $p \le p'$, and each $\mathcal{B}_p$ has an order or cardinality property $\mathcal{Q}$, then $\mathcal{N}=\bigcup_p \mathcal{N}_p$, $\mathcal{N}_p \subseteq \mathcal{N}_{p'}$ if $p \le p'$, and each $\mathcal{N}_p$ has property $\mathcal{Q}$.

Thus a space with a $P$-$\mathcal{Q}$ base has a $P$-$\mathcal{Q}$ expandable network.
\end{lemma}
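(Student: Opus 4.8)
The plan is to verify the un-decorated statement first and then carry the $P$-$\mathcal{Q}$ structure through the identical construction, since nothing beyond bookkeeping is needed.

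First I would check that $\mathcal{N}=\{(B,B):B\in\mathcal{B}\}$ is an expandable network. Each pair $(B,B)$ is an admissible pair because $B\subseteq B$ and $B$ is open, being a member of the base $\mathcal{B}$. For the network condition, suppose $x\in U$ with $U$ open; since $\mathcal{B}$ is a base there is $B\in\mathcal{B}$ with $x\in B\subseteq U$, and then $(B,B)\in\mathcal{N}$ witnesses $x\in B\subseteq U$, which is exactly what the definition of an expandable network requires. (One could equally invoke the remark already in the text that a base is a network and that $\{(N,X):N\in\mathcal{N}'\}$ is expandable whenever $\mathcal{N}'$ is a network, but taking the second coordinate to be $B$ rather than $X$ is what makes the order transfer work below.)

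Next I would put $\mathcal{N}_p=\{(B,B):B\in\mathcal{B}_p\}$. Then $\mathcal{N}=\bigcup_{p\in P}\mathcal{N}_p$, and since $\mathcal{B}_p\subseteq\mathcal{B}_{p'}$ forces $\mathcal{N}_p\subseteq\mathcal{N}_{p'}$, this is a $P$-ordering of $\mathcal{N}$. The key observation is that the assignment $B\mapsto(B,B)$ is a bijection of $\mathcal{B}_p$ onto $\mathcal{N}_p$ which is an order isomorphism for the (coordinatewise) inclusion order on pairs — here $(B_1,B_1)\subseteq(B_2,B_2)$ reduces to $B_1\subseteq B_2$ — and which carries $(\mathcal{B}_p)_x$ onto $(\mathcal{N}_p)_x$ for every point $x$, because the distinguished open set of the pair $(B,B)$ is $B$ itself, so $(B,B)\in(\mathcal{N}_p)_x$ means precisely $x\in B$. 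Consequently any order or cardinality property $\mathcal{Q}$ that each $\mathcal{B}_p$ enjoys — point-$<\!\!\kappa$, point finiteness, $<\!\!\kappa$-Noetherian, additively $<\!\!\kappa$-Noetherian, or their pointwise variants — is inherited verbatim by each $\mathcal{N}_p$. The final sentence of the lemma is then immediate: applying the decorated construction to a $P$-$\mathcal{Q}$ base yields a $P$-$\mathcal{Q}$ expandable network.

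There is essentially no obstacle here; the one spot warranting a moment's care is reconciling the meaning of an ``order property'' on a family of pairs with its meaning on a family of sets, that is, confirming that the relevant $\subseteq$-comparisons and the operation $\mathcal{P}\mapsto(\mathcal{P})_x$ behave identically under the bijection $B\mapsto(B,B)$. Once that is granted the argument is purely formal, which is why it is left to the reader.
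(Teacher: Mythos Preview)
Your proof is correct and is precisely the routine verification the paper has in mind; the authors explicitly leave this lemma to the reader, and your bookkeeping via the bijection $B\mapsto(B,B)$ is the natural way to carry it out.
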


Now we show that $P$-$\mathcal{Q}$ expandable networks give $P$-$\mathcal{Q}$ strong point networks. It should be evident that a $P$-$\mathcal{Q}$ strong point network is a $P$-$\mathcal{Q}$ point network.
\begin{lemma}\label{en_spn}
Suppose a space $X$ has a $P$-$\mathcal{Q}$ expandable  network $\mathcal{N}$, say $\mathcal{N}=\bigcup_{p \in P} \mathcal{N}_p$, where $\mathcal{N}_p \subseteq \mathcal{N}_{p'}$ if $p \le p'$, and each $\mathcal{N}_p$ has property $\mathcal{Q}$. Then $\mathcal{W}=\{ \mathcal{W}(x) : x \in X\}$, where for each $x$ in the space $\mathcal{W}(x)=\bigcup_p \mathcal{W}_p (x)$ and $\mathcal{W}_p(x) = \{ \{x\} \cup N : (N,V) \in \mathcal{N}_p$ and $x \in V\}$, is a $P$-$\mathcal{Q}$ strong point network for $X$.
\end{lemma}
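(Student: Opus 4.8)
The plan is to check directly, from the definition of $\mathcal{W}$, the three requirements for a $P$-$\mathcal{Q}$ strong point network. Two are bookkeeping: that for each $x$ the families $\mathcal{W}_p(x)$ are $\subseteq$-increasing in $p$ with union $\mathcal{W}(x)$, and that each $\mathcal{W}_p(x)$ has property $\mathcal{Q}$. The third is the strong covering clause: given $x$ in an open $U$, produce an open $V'$ with $x\in V'\subseteq U$ and a \emph{single} $p\in P$ such that every $y\in V'$ has some $W\in\mathcal{W}_p(y)$ with $x\in W\subseteq V'$. The first bookkeeping point is immediate: $\mathcal{N}_p\subseteq\mathcal{N}_{p'}$ for $p\le p'$ gives $\mathcal{W}_p(x)\subseteq\mathcal{W}_{p'}(x)$, the union is $\mathcal{W}(x)$ by construction, and each member $\{x\}\cup N$ of $\mathcal{W}_p(x)$ contains $x$, as a point network requires.

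I would handle the covering clause next, as it is the one genuinely constructive step. Given $x\in U$ open, the expandable-network property of $\mathcal{N}$ yields a pair $(N,V)\in\mathcal{N}$ with $x\in N\subseteq U$; since $\mathcal{N}=\bigcup_p\mathcal{N}_p$, fix $p\in P$ with $(N,V)\in\mathcal{N}_p$, and take this $p$ as $p(x,U)$. The correct neighbourhood is $V'=V\cap U$ --- and the intersection with $U$ really is needed. Then $V'$ is open, and $x\in N\subseteq V$ together with $x\in U$ gives $x\in V'\subseteq U$; and for any $y\in V'$ the set $W=\{y\}\cup N$ lies in $\mathcal{W}_p(y)$ because $y\in V$, contains $x$ because $x\in N$, and satisfies $W\subseteq V'$ because $y\in V'$ and $N\subseteq V\cap U=V'$. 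Note that shrinking to $V\cap U$ rather than to $V$ is precisely what delivers $W\subseteq V'$ instead of merely $W\subseteq U$, i.e. the \emph{strong} conclusion rather than the plain point-network one.

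The step I expect to be the only real nuisance is showing each $\mathcal{W}_p(x)$ inherits property $\mathcal{Q}$ from $\mathcal{N}_p$. The organising observation is that the map $(N,V)\mapsto\{x\}\cup N$ sends the subfamily $(\mathcal{N}_p)_x=\{(N,V)\in\mathcal{N}_p:x\in V\}$ \emph{onto} $\mathcal{W}_p(x)$, so $|\mathcal{W}_p(x)|\le|(\mathcal{N}_p)_x|$, and, since $x$ is fixed, a union of members of $\mathcal{W}_p(x)$ is simply $\{x\}$ adjoined to the corresponding union of the sets $N$ appearing in $(\mathcal{N}_p)_x$. Consequently the families $\mathcal{W}_p(x)$ and $\{N:(N,V)\in(\mathcal{N}_p)_x\}$ agree up to adjoining one fixed point to each member, and this operation is cardinality-non-increasing and preserves strict $\subseteq$-increase among (unions of) members. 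Hence whichever order or cardinality property $\mathcal{Q}$ is in play --- a bound like point-$<\!\!\kappa$ or point-finiteness, or a Noetherian-type condition such as being (additively) $<\!\!\kappa$-Noetherian --- passes from $\mathcal{N}_p$ (where, for a family of pairs, it is read off the open expansion sets $V$) to $\mathcal{W}_p(x)$. The only delicate part is this last translation between the two ways of indexing the property; once it is in place, the three requirements are met and $\mathcal{W}$ is a $P$-$\mathcal{Q}$ strong point network for $X$ (hence also a $P$-$\mathcal{Q}$ point network).
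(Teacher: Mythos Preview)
Your argument is correct and matches the paper's proof essentially verbatim: the same choice $V(x,U)=V\cap U$ and $W=\{y\}\cup N$, with the same verification that $W\subseteq V\cap U$ because $N\subseteq V$ and $N\subseteq U$. You in fact go slightly beyond the paper by spelling out why the property $\mathcal{Q}$ transfers from $(\mathcal{N}_p)_x$ to $\mathcal{W}_p(x)$, which the paper leaves implicit.
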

\begin{proof} Fix $x$ in $X$ and $p \le p'$ from $P$. We verify $\mathcal{W}_p(x) \subseteq \mathcal{W}_{p'}(x)$. Take any $W$ from $\mathcal{W}_p(x)$. Then $W=\{x\} \cup N$ for some $(N,V)$ in $\mathcal{N}_p$ such that $x \in V$. Since $(N,V) \in \mathcal{N}_p$ and $\mathcal{N}_p \subseteq \mathcal{N}_{p'}$ (and $x$ is still in $V$!), we see that $W=\{x\} \cup N$ is in $\mathcal{W}_{p'}(x)$.

Now take any open set $U$ containing a point $x$. Since $\mathcal{N}=\bigcup \{\mathcal{N}_p : p \in P\}$ is an expandable point network, there is a $p=p(x,U)$ such that $x \in N \subseteq U$ for some $(N,V')$ in $\mathcal{N}_p$. Let $V(x,U)=V' \cap U$, and note $x\in V \subseteq U$. Take any $y$ in $V$. Then, as $V \subseteq V'$, by definition, $W=\{y\} \cup N$ is a member of $\mathcal{W}_p(y)$, and since $y\in V$, $x \in N \subseteq U$ and $N \subseteq V'$, we clearly have  $x \in W \subseteq V$.
\end{proof}

Combining the previous two lemmas we know if a space $X$ has a $P$-$\mathcal{Q}$ base $\mathcal{B}=\bigcup \{ \mathcal{B}_p : p \in P\}$ then it has a $P$-$\mathcal{Q}$ strong network. Indeed, we can take $\mathcal{W}=\{\mathcal{W}(x) : x \in X\}$ where $\mathcal{W}(x) = \bigcup_{p \in P} \mathcal{W}_p(x)$ and $\mathcal{W}_p(x)=(\mathcal{B}_p)_x$.

\subsection*{Stability of Point Networks}

Let $\mathcal{Q}$ be a property that can be reasonably applied to families of subsets of a topological space. We say that $\mathcal{Q}$ is hereditary if whenever $A$ is a subspace of $X$ and $\mathcal{C}$ is a family of subsets of $X$ satisfying $\mathcal{Q}$, then their traces onto $A$, $\mathcal{C}_A = \{C \cap A : C \in \mathcal{C}\}$ is a family of subsets of $A$ also satisfying $\mathcal{Q}$. We say that $\mathcal{Q}$ is (finitely) productive if whenever $\mathcal{C}$ is a family of subsets of $X$ satisfying $\mathcal{Q}$ and $\mathcal{C}'$ is a family of subsets of $Y$ satisfying $\mathcal{Q}$, then the collection of subsets of $X\times Y$ given by $\{ C \times C' : C \in \mathcal{C}, C' \in \mathcal{C}'\}$ satisfies $\mathcal{Q}$. Finally we say $\mathcal{Q}$ is preserved by images if whenever $\mathcal{C}$ is a collection of subsets of a space $X$ which satisfies $\mathcal{Q}$, and $f$ is a continuous map of $X$ into a space $Y$, then $f(\mathcal{C})=\{f(C) : C \in \mathcal{C}\}$ has $\mathcal{Q}$ in $Y$.

The next proposition is straight forward. Note for claim (2) that for any directed set $P$ we have $P \tq P \times P$, which is why the natural $(P\times P)$-$\mathcal{Q}$ (strong) point network for $X \times Y$ is in fact a $P$-$\mathcal{Q}$ (strong) point network.
\begin{prop}\label{stab}

(1) Let $\mathcal{Q}$ be hereditary. If a space $X$ has a $P$-$\mathcal{Q}$ (strong) point network then so does every subspace of $X$.

(2) Let $\mathcal{Q}$ be productive. If spaces $X$ and $Y$ have a $P$-$\mathcal{Q}$ (strong) point network, then  $X\times Y$ also has a $P$-$\mathcal{Q}$ (strong) point network.
\end{prop}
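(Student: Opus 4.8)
The plan is to exhibit the natural candidate networks in each case and then check the three conditions separately: monotonicity in $P$, property $\mathcal{Q}$ on each layer, and the defining (strong) point network condition. In both parts the first two are immediate from the hypotheses on $\mathcal{Q}$, and the real content — such as it is — lies in transferring the defining condition.

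For (1), suppose $X$ has a $P$-$\mathcal{Q}$ (strong) point network $\mathcal{W}=\{\mathcal{W}(x):x\in X\}$ with $\mathcal{W}(x)=\bigcup_{p\in P}\mathcal{W}_p(x)$, and let $A$ be a subspace of $X$. For $a\in A$ I would set $\mathcal{W}^A_p(a)=\{W\cap A:W\in\mathcal{W}_p(a)\}$ and $\mathcal{W}^A(a)=\bigcup_{p\in P}\mathcal{W}^A_p(a)$. Monotonicity $\mathcal{W}^A_p(a)\subseteq\mathcal{W}^A_{p'}(a)$ for $p\le p'$ is inherited from $\mathcal{W}$; each $\mathcal{W}^A_p(a)$ has $\mathcal{Q}$ because $\mathcal{Q}$ is hereditary, applied to the family $\mathcal{W}_p(a)$ of subsets of $X$ and the subspace $A$; and each $W\cap A$ contains $a$ since $a\in W$ and $a\in A$. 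For the defining condition, let $a$ lie in a relatively open set $U\cap A$ with $U$ open in $X$; applying the point network of $X$ to $a\in U$ gives an open $V=V(a,U)$ with $a\in V\subseteq U$ and a $p=p(a,U)\in P$ such that for every $y\in V$ there is a $W\in\mathcal{W}_p(y)$ with $a\in W\subseteq U$ (resp. $a\in W\subseteq V$). Then $V\cap A$ is a relatively open neighborhood of $a$ inside $U\cap A$, and for every $y\in V\cap A$ the set $W\cap A$ lies in $\mathcal{W}^A_p(y)$ with $a\in W\cap A\subseteq U\cap A$ (resp. $\subseteq V\cap A$), which is exactly the condition for $A$.

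For (2), suppose $X$ and $Y$ carry $P$-$\mathcal{Q}$ (strong) point networks $\mathcal{W}^X$ and $\mathcal{W}^Y$. For $(x,y)\in X\times Y$ I would set $\mathcal{W}_p(x,y)=\{W_1\times W_2:W_1\in\mathcal{W}^X_p(x),\ W_2\in\mathcal{W}^Y_p(y)\}$ and $\mathcal{W}(x,y)=\bigcup_{p\in P}\mathcal{W}_p(x,y)$; this indexes by $P$ rather than $P\times P$ via the diagonal $p\mapsto(p,p)$, which is a Tukey quotient $P\to P\times P$ since $P$ is directed (so $P\tq P\times P$). Monotonicity is clear, each layer has $\mathcal{Q}$ because $\mathcal{Q}$ is productive, and $(x,y)\in W_1\times W_2$. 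For the defining condition, let $(x,y)$ lie in an open $O$; since we may take $O$ basic, write $O=U\times U'$ with $x\in U$ open in $X$ and $y\in U'$ open in $Y$. Apply the point network of $X$ to $x\in U$ to get $V$, $x\in V\subseteq U$, and $p\in P$; apply the point network of $Y$ to $y\in U'$ to get $V'$, $y\in V'\subseteq U'$, and $q\in P$; and choose $r\ge p,q$ in $P$. Then $V\times V'$ is an open neighborhood of $(x,y)$ inside $O$, and for any $(x',y')\in V\times V'$ there are $W_1\in\mathcal{W}^X_p(x')\subseteq\mathcal{W}^X_r(x')$ and $W_2\in\mathcal{W}^Y_q(y')\subseteq\mathcal{W}^Y_r(y')$ with $x\in W_1\subseteq U$ and $y\in W_2\subseteq U'$ (resp. $W_1\subseteq V$, $W_2\subseteq V'$); hence $W_1\times W_2\in\mathcal{W}_r(x',y')$ and $(x,y)\in W_1\times W_2\subseteq U\times U'=O$ (resp. $\subseteq V\times V'$), as required.

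I do not expect a genuine obstacle; the argument is bookkeeping. The two points that need a little care both occur in (2): the reindexing from $P\times P$ down to $P$, which is handled by the observation recorded just before the proposition that $P\tq P\times P$ for every directed $P$ and requires only that one pass to a common upper bound $r$ of the two indices coming from the two coordinate spaces before asserting $W_1\times W_2$ lies in a single layer; and the reduction to a basic open box $U\times U'$, which is legitimate by the remark after the definition of point network that $U$ and $V$ may be taken basic.
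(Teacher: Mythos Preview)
Your proposal is correct and matches the paper's intended approach exactly: the paper does not write out a proof but simply declares the proposition ``straight forward'' and records the one non-trivial point, namely that $P\tq P\times P$ for any directed $P$, which is precisely your reindexing step in part~(2). Your argument fills in the routine details the paper omits, including the reduction to basic open boxes (sanctioned by the remark after the definition of point network) and the passage to a common upper bound~$r$.
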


Invariance under maps requires a little more work.
\begin{prop}\label{perfect_im} Let $\kappa$ be an infinite cardinal. Let $f:X\rightarrow Y$ be a closed surjection with $<\!\!\kappa$ compact fibers. Let $P$ be a $<\!\!\kappa$ directed set, and $\mathcal{Q}$ is a property preserved by taking images.

Then $Y$ has a $P$-$\mathcal{Q}$ point network provided that $X$ has a $P$-$\mathcal{Q}$ point network.
\end{prop}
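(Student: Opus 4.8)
The plan is to transport the given point network across $f$ after fixing one point in each fibre. Fix a $P$-$\mathcal{Q}$ point network $\mathcal{W}=\{\mathcal{W}(x):x\in X\}$ for $X$ with $\mathcal{W}(x)=\bigcup_{p\in P}\mathcal{W}_p(x)$ as in the definition, and, using Choice and surjectivity of $f$, fix for each $y\in Y$ a point $x(y)\in f^{-1}(y)$. For $p\in P$ and $y\in Y$ set $\mathcal{V}_p(y)=f(\mathcal{W}_p(x(y)))=\{f(W):W\in\mathcal{W}_p(x(y))\}$ and $\mathcal{V}(y)=\bigcup_{p\in P}\mathcal{V}_p(y)$. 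Then every member of $\mathcal{V}_p(y)$ contains $y$, since every member of $\mathcal{W}_p(x(y))$ contains $x(y)$; $\mathcal{V}_p(y)\subseteq\mathcal{V}_{p'}(y)$ whenever $p\le p'$; and each $\mathcal{V}_p(y)$ has property $\mathcal{Q}$ because $\mathcal{W}_p(x(y))$ does and $\mathcal{Q}$ is preserved by (continuous) images. So it only remains to verify the structuring condition of a point network.

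So suppose a point $y\in Y$ lies in an open set $U\subseteq Y$. Then $f^{-1}(U)$ is open in $X$ and contains the $<\!\!\kappa$-compact fibre $f^{-1}(y)$. For each $x\in f^{-1}(y)$, applying the point network of $X$ to $x\in f^{-1}(U)$ yields an open $V(x)\ni x$ with $V(x)\subseteq f^{-1}(U)$ and a $p(x)\in P$ such that every $z\in V(x)$ admits some $W\in\mathcal{W}_{p(x)}(z)$ with $x\in W\subseteq f^{-1}(U)$. Since $f^{-1}(y)$ is $<\!\!\kappa$-compact, the cover $\{V(x):x\in f^{-1}(y)\}$ has a subcover $\{V(x_i):i\in I\}$ with $|I|<\kappa$; set $O=\bigcup_{i\in I}V(x_i)$, an open set containing $f^{-1}(y)$. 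Since $P$ is $<\!\!\kappa$-directed, $\{p(x_i):i\in I\}$ has an upper bound $p\in P$. Because $f$ is closed, $V=U\cap\bigl(Y\setminus f(X\setminus O)\bigr)$ is open, and one checks $y\in V\subseteq U$ and $f^{-1}(V)\subseteq O$. I claim this $V$ and this $p$ witness the condition for $(y,U)$.

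To see this, take any $y'\in V$. Then $x(y')\in f^{-1}(y')\subseteq f^{-1}(V)\subseteq O$, so $x(y')\in V(x_i)$ for some $i\in I$; hence there is $W\in\mathcal{W}_{p(x_i)}(x(y'))$ with $x_i\in W\subseteq f^{-1}(U)$. As $p(x_i)\le p$ we get $W\in\mathcal{W}_p(x(y'))$, so $W':=f(W)\in\mathcal{V}_p(y')$. Finally $x_i\in f^{-1}(y)$ gives $y=f(x_i)\in f(W)=W'$, and $W\subseteq f^{-1}(U)$ gives $W'\subseteq U$. Thus $y\in W'\subseteq U$ with $W'\in\mathcal{V}_p(y')$, which is exactly what the structuring condition requires, so $\mathcal{V}=\{\mathcal{V}(y):y\in Y\}$ is a $P$-$\mathcal{Q}$ point network for $Y$.

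The step I expect to be delicate is the production of the single bound $p$: this is precisely where one must use that the fibres are $<\!\!\kappa$-compact (to thin $\{V(x)\}$ down to a $<\!\!\kappa$-sized subfamily) together with $P$ being $<\!\!\kappa$-directed (to bound the corresponding $<\!\!\kappa$-many $p(x_i)$), while closedness of $f$ supplies the tube-type inclusion $f^{-1}(V)\subseteq O$. The other point worth flagging is the device of selecting one point $x(y)$ per fibre: this keeps each $\mathcal{V}_p(y)$ a single $f$-image and hence automatically of type $\mathcal{Q}$. Defining instead $\mathcal{V}_p(y)=\bigcup\{f(\mathcal{W}_p(x)):x\in f^{-1}(y)\}$ would force $\mathcal{Q}$ to be stable under (potentially large) unions, which is not among the hypotheses; the selection avoids this entirely.
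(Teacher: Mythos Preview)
Your proof is correct and follows essentially the same route as the paper's: select one point $x(y)$ per fibre, push $\mathcal{W}_p(x(y))$ forward via $f$, and for a given $y\in U$ use $<\!\!\kappa$-compactness of $f^{-1}(y)$ to extract a $<\!\!\kappa$-sized subcover, $<\!\!\kappa$-directedness of $P$ to bound the associated indices, and closedness of $f$ for the tube $f^{-1}(V)\subseteq O$. Your write-up is in fact a bit more explicit than the paper's (you spell out $V=U\cap(Y\setminus f(X\setminus O))$ and explain why the one-point-per-fibre device is needed to keep $\mathcal{Q}$), but the argument is the same.
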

\begin{proof}Let $\mathcal{W}(x)=\bigcup_{p} \mathcal{W}_p(x)$ be
a $P$-$\mathcal{Q}$ point network of $X$.
For each $y\in Y$, pick $x_y\in f^{-1}(y)$. Then for each $p$ in $P$ define
$\mathcal{W}_p(y)=\{f(W): W\in \mathcal{W}_p(x_y)\}$, and note it has property $\mathcal{Q}$. Set $\mathcal{W}(y)=\bigcup_p \mathcal{W}_p(y)$ and $\mathcal{W}_Y=\{\mathcal{W}(y) : y \in Y\}$.

We verify that $\mathcal{W}_Y$ is a $P$-$\mathcal{Q}$ point network of $Y$.
Take $y\in U$ where $U$ is an open subset in $Y$. By
the definition of $P$-$\mathcal{Q}$ point network, for any $x\in
f^{-1}(y)$, there exist $V(x, f^{-1}(U))$ and $p(x, f^{-1}(U))$ such
that for any $\hat{x} \in V(x, f^{-1}(U))$, there exists $W \in
\mathcal{W}_{p(x, f^{-1}(U))}(\hat{x})$ such that $x\in W \subseteq
U$. By hypothesis, $f^{-1}(y)$ is $<\!\!\kappa$ compact. Hence, there is a $\tau < \kappa$ such that for each $\alpha < \tau$ there exists
$x_\alpha \in f^{-1}(y)$ such that $f^{-1}(y)\subseteq
\bigcup_{\alpha < \tau} V(x_\alpha, f^{-1}(U))$. Since, by hypothesis, $P$ is $<\!\!\kappa$ directed and $\tau < \kappa$, we can pick $\hat{p}=\hat{p}(y,
U)$ satisfy $ \hat{p}\geq p(x_\alpha, f^{-1}(U)): \alpha < \tau$. As $f$ is a closed
mapping, we can pick $\hat{V}=V(y, U)$ such that $f^{-1}(y)\subseteq
f^{-1}(\hat{V})\subseteq \bigcup_{\alpha < \tau} V(x_\alpha, f^{-1}(U))$.

Take any $\hat{y}\in\hat{V}$. Then $x_{\hat{y}}\in
f^{-1}(\hat{V})\subseteq \bigcup_{\alpha < \tau} V(x_\alpha, f^{-1}(U))$. Hence
$x_{\hat{y}}\in V(x_\alpha, f^{-1}(U))$ for some $\alpha$. Therefore, there
exists $W\in \mathcal{W}_{p(x_\alpha, f^{-1}(U))}(x_{\hat{y}})$ such that
$x_\alpha\in W\subseteq f^{-1}(U)$. So we have $W\in
\mathcal{W}_{\hat{p}}(x_{\hat{y}})$ and $y=f(x_\alpha)\in f(W)\subseteq
U$ where $f(W)\in \mathcal{W}_{\hat{p}}(\hat{y})$.
\end{proof}

\begin{lemma}\label{n_stab} For any cardinal $\kappa$:

(1) the property $\mathcal{Q}=$ `$<\!\!\kappa$' (has size strictly less than $\kappa$) is  hereditary, productive and preserved by images,

(2) the property $\mathcal{Q}=$ `additively $<\!\!\kappa$-Noetherian' is hereditary and preserved by images, while

(3) the property `additively Noetherian' is productive.
\end{lemma}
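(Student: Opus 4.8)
===

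The plan is to prove Lemma~\ref{n_stab} by handling each of the three clauses separately, since each asks something different about the property $\mathcal{Q}$ under the three operations (heredity, productivity, image-preservation). Throughout I work with the characterizations already available: for clause (2) I will lean heavily on Lemma~\ref{Noeth}(4), the ``no long increasing tower of (set, point) pairs'' formulation, which is the most robust characterization to transport along maps and restrictions.

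\textbf{Clause (1).} Heredity is immediate: if $\mathcal{C}$ has size $<\!\!\kappa$ then so does $\mathcal{C}_A=\{C\cap A : C\in\mathcal{C}\}$, being the image of $\mathcal{C}$ under $C\mapsto C\cap A$. Productivity: $\{C\times C' : C\in\mathcal{C}, C'\in\mathcal{C}'\}$ has size at most $|\mathcal{C}|\cdot|\mathcal{C}'|<\!\!\kappa$ since $\kappa$ is infinite (so closed under products of two smaller cardinals). Preservation by images: $f(\mathcal{C})$ has size at most $|\mathcal{C}|<\!\!\kappa$. All three are one line.

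\textbf{Clause (2).} For heredity, suppose $\mathcal{C}$ is additively $<\!\!\kappa$-Noetherian on $X$ and $A\subseteq X$; I want $\mathcal{C}_A$ additively $<\!\!\kappa$-Noetherian on $A$, and I verify the negation of Lemma~\ref{Noeth}(4). If there were pairs $(C_\alpha\cap A, x_\alpha)$, $\alpha<\kappa$, with $x_\alpha\in A$, $x_\alpha\in C_\alpha\cap A$ but $x_\alpha\notin C_\beta\cap A$ for $\beta<\alpha$, then since each $x_\alpha\in A$ the pairs $(C_\alpha, x_\alpha)$ witness the failure of (4) for $\mathcal{C}$ itself --- contradiction. (One must be slightly careful that unions are respected; but (4) is stated for the family itself, and the condition $x_\alpha\in C_\alpha\setminus\bigcup_{\beta<\alpha}C_\beta$ passes from traces back to $X$ verbatim because $x_\alpha\in A$.) For preservation by images, let $f:X\to Y$ be continuous and suppose $f(\mathcal{C})$ were not additively $<\!\!\kappa$-Noetherian on $Y$, witnessed by pairs $(f(C_\alpha), y_\alpha)$, $\alpha<\kappa$. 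Choose $x_\alpha\in C_\alpha$ with $f(x_\alpha)=y_\alpha$ (possible since $y_\alpha\in f(C_\alpha)$). Then $x_\alpha\in C_\alpha$, and if $\beta<\alpha$ then $x_\alpha\notin C_\beta$: otherwise $y_\alpha=f(x_\alpha)\in f(C_\beta)$, contradicting the choice of the witnessing family. So the $(C_\alpha, x_\alpha)$ negate (4) for $\mathcal{C}$, a contradiction. I expect the image-preservation argument to be the one place where the choice of the right characterization matters --- it works cleanly with (4) but would be awkward with ``short well-ordered subchains,'' since $f$ need not preserve proper inclusions.

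\textbf{Clause (3).} For productivity of ``additively Noetherian'' (the case $\kappa=\aleph_0$, so ``every well-ordered-by-$\subseteq$ subfamily of unions is finite''), suppose $\mathcal{C}$ on $X$ and $\mathcal{C}'$ on $Y$ are additively Noetherian but the product family $\mathcal{D}=\{C\times C' : C\in\mathcal{C}, C'\in\mathcal{C}'\}$ is not; by Lemma~\ref{Noeth}(4) (with $\kappa=\aleph_0$) there are $(C_n\times C_n', (x_n,y_n))$ for all $n<\omega$ with $(x_n,y_n)\in C_n\times C_n'$ but $(x_n,y_n)\notin C_m\times C_m'$ for $m<n$. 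The last condition says: for each $m<n$, either $x_n\notin C_m$ or $y_n\notin C_m'$. By the infinite pigeonhole / Ramsey argument on the pairs $\{m<n\}$, pass to an infinite subset $S\subseteq\omega$ on which one alternative is uniform: either for all $m<n$ in $S$ we have $x_n\notin C_m$ (while always $x_n\in C_n$), giving an infinite tower violating (4) for $\mathcal{C}$; or symmetrically for $\mathcal{C}'$. Either way we contradict the hypothesis. The main obstacle --- and the only genuinely non-routine point in the whole lemma --- is exactly this: the ``not in the product'' condition is a disjunction, and one needs a Ramsey-type coloring (color the pair $\{m<n\}$ by which of $x_n\notin C_m$, $y_n\notin C_m'$ holds, using the infinite Ramsey theorem for $2$-colorings of $[\omega]^2$) to extract a monochromatic infinite set and thereby a genuine tower on one side. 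I would state that step explicitly and leave the bookkeeping to the reader.
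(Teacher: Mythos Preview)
Your proof is correct. For clauses (1) and (2) you take essentially the same approach as the paper: the paper declares (1) and the heredity part of (2) immediate, and for preservation of additively $<\!\!\kappa$-Noetherian under images it invokes Lemma~\ref{Noeth}(4) exactly as you do, only more tersely (``Applying clause (4) of Lemma~\ref{Noeth} it is simple to see that if $f(\mathcal{C})$ is not additively $<\!\!\kappa$-Noetherian then neither is $\mathcal{C}$'').

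The genuine difference is clause (3). The paper does not prove it at all; it simply cites Nyikos \cite{Ny}. Your Ramsey argument (color the pair $\{m<n\}$ by which coordinate witnesses $(x_n,y_n)\notin C_m\times C'_m$, extract an infinite monochromatic set, and obtain a tower violating Lemma~\ref{Noeth}(4) on one factor) is correct and self-contained, and is indeed in the spirit of Nyikos's original argument. What your approach buys is that the reader need not chase a reference for what is, as you observe, the only non-routine point in the lemma; what the paper's approach buys is brevity and proper attribution. Your remark that this Ramsey step is ``the only genuinely non-routine point'' is apt and worth keeping.
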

\begin{proof}
Claim (1) is immediate, as is the fact that additively $<\!\!\kappa$-Noetherian is hereditary. Claim (3) was established by Nyikos in \cite{Ny}. We indicate why  additively $<\!\!\kappa$-Noetherian is preserved by images.

Fix spaces $X$ and $Y$, a collection $\mathcal{C}$ of subsets of $X$, and a map $f$ of $X$ into $Y$. Applying clause (4) of Lemma~\ref{Noeth} it is simple to see that if $f(\mathcal{C})$ is not additively $<\!\!\kappa$-Noetherian then neither is $\mathcal{C}$.
\end{proof}

Recall that in this paper all partial orders $P$ are directed (in other words, $<\!\!\aleph_0$ directed), and note that the trivial partial order $\mathbf{1}$ is (trivially!) $<\!\!\kappa$ directed for all $\kappa$. Thus we deduce:
\begin{cor}\label{perfect_i} \

(1) \ If $f: X \to Y$ is a closed surjection and $X$ has a $\mathcal{Q}$ point network then so does $Y$, whenever $\mathcal{Q}$ is closed under images.

(2) \ If $f: X \to Y$ is a perfect surjection and $X$ has a $P$-additively $<\!\!\kappa$ Noetherian point network then so does $Y$.

(3) \ If $f: X \to Y$ is a perfect surjection and $X$ has a $P$-finite point network then so does $Y$.
\end{cor}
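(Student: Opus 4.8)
The plan is to read off all three parts directly from Proposition~\ref{perfect_im} and Lemma~\ref{n_stab}, using the observation that every directed set (in the convention of this paper) is $<\!\!\aleph_0$-directed, and that the trivial order $\mathbf{1}$ is $<\!\!\kappa$-directed for every $\kappa$. The only genuine content is matching the hypotheses of Proposition~\ref{perfect_im} --- a closed surjection with $<\!\!\kappa$-compact fibres, a $<\!\!\kappa$-directed index set, and a property $\mathcal{Q}$ preserved by images --- to the situation at hand in each case.

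For part (1), I would take $\kappa = \aleph_1$, so that ``$<\!\!\kappa$-compact fibres'' just means ``Lindel\"of fibres'' --- but in fact a closed surjection has no fibre condition imposed here, so instead I would take $\kappa$ large enough (say, larger than the weight, or any cardinal bounding the fibres --- more cleanly, observe that a closed \emph{surjection} automatically has fibres that are closed, hence we simply choose $\kappa$ above the supremum of $|f^{-1}(y)|$, or note the argument of Proposition~\ref{perfect_im} goes through whenever the fibres are $<\!\!\kappa$-compact and we are free to enlarge $\kappa$). With $P = \mathbf{1}$, which is $<\!\!\kappa$-directed, and $\mathcal{Q}$ closed under images, Proposition~\ref{perfect_im} gives that $Y$ has a $\mathbf{1}$-$\mathcal{Q}$ point network, which by the remark at the end of the definition of $P$-$\mathcal{Q}$ point networks is precisely a $\mathcal{Q}$ point network. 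For part (2), a perfect map has compact ($= <\!\!\aleph_0$-compact) fibres, so take $\kappa = \aleph_0$; then $P$ is $<\!\!\aleph_0$-directed by the standing convention, and $\mathcal{Q} = $ ``additively $<\!\!\kappa$-Noetherian'' is preserved by images by Lemma~\ref{n_stab}(2), so Proposition~\ref{perfect_im} applies verbatim. Part (3) is the special case of (2) with $\kappa = \aleph_0$ again: ``$P$-finite point network'' means ``$P$-$\mathcal{Q}$ point network'' with $\mathcal{Q} = $ ``finite'' $= $ ``$<\!\!\aleph_0$'', which is preserved by images by Lemma~\ref{n_stab}(1), and perfect maps have finite... rather, compact, hence $<\!\!\aleph_0$-compact fibres, so Proposition~\ref{perfect_im} gives the conclusion.

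The only place where care is needed --- and what I expect to be the main (though minor) obstacle --- is part (1): Proposition~\ref{perfect_im} is stated with an explicit cardinal $\kappa$ and demands $<\!\!\kappa$-compact fibres, whereas part (1) asserts the result for an arbitrary closed surjection with no fibre hypothesis. The resolution is that $\mathbf{1}$ is $<\!\!\kappa$-directed for \emph{all} $\kappa$ simultaneously, so one may simply choose $\kappa$ to exceed the size of every fibre (every set is $<\!\!\kappa$-compact once $\kappa$ is large enough), and then invoke Proposition~\ref{perfect_im} with that $\kappa$ and $P = \mathbf{1}$. I would state this explicitly in one sentence and then dispatch (2) and (3) as the instances $\kappa = \aleph_0$, $P$ arbitrary, citing Lemma~\ref{n_stab} for the image-preservation of the relevant $\mathcal{Q}$.
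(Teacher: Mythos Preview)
Your proposal is correct and matches the paper's approach exactly: the paper's entire argument is the sentence preceding the corollary, namely that every directed set is $<\!\!\aleph_0$-directed and $\mathbf{1}$ is $<\!\!\kappa$-directed for all $\kappa$, after which Proposition~\ref{perfect_im} together with Lemma~\ref{n_stab} gives all three parts. Your handling of part~(1) --- choosing $\kappa$ large enough that every fibre is trivially $<\!\!\kappa$-compact --- is the right way to fill in what the paper leaves implicit; just be careful in part~(2) to distinguish the $\kappa=\aleph_0$ fed into Proposition~\ref{perfect_im} from the (arbitrary) $\kappa$ appearing in the property $\mathcal{Q}=$ ``additively $<\!\!\kappa$-Noetherian''.
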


\subsection*{Covering Properties From Point Networks}

A space is said to be $P$-meta $<\!\!\kappa$ compact if and only if every open cover has a $P$-point $<\!\!\kappa$ open refinement. Let us abbreviate `$P$-meta $<\!\!\aleph_0$ compact' to `$P$-metacompact'. Note that a space is metacompact if and only if it is $1$-metacompact, $\sigma$-metacompact if and only if it is  $\N$-metacompact, and is metaLindelof if and only if it is $1$-meta $<\!\!\aleph_1$ compact.

\begin{theorem}
If a space has a $P$-additively $<\!\!\kappa$ Noetherian point network then it   is   $P$-meta $<\!\!\kappa$ compact.
\end{theorem}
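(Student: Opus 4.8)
Let $X$ have a $P$-additively $<\!\!\kappa$-Noetherian point network $\mathcal{W}=\{\mathcal{W}(x):x\in X\}$, so each $\mathcal{W}(x)=\bigcup_{p\in P}\mathcal{W}_p(x)$ with every $\mathcal{W}_p(x)$ additively $<\!\!\kappa$-Noetherian, and for each $x$ in an open $U$ there are $V(x,U)$ and $p(x,U)$ as in the definition. The plan is to take an arbitrary open cover $\mathcal{U}$ of $X$, and for each $x$ fix $U_x\in\mathcal{U}$ with $x\in U_x$, then fix $V_x=V(x,U_x)$ and $p_x=p(x,U_x)$. For each $p\in P$ I would form the candidate refinement $\mathcal{R}_p=\{\,\operatorname{int}(W) : W\in\mathcal{W}_p(y)\text{ for some }y,\ \text{and }W\subseteq U_z\text{ for some }z\text{ with }y\in V_z\ \text{and }p_z\le p\,\}$ — more precisely, for each $z$ with $p_z\le p$ and each $y\in V_z$ pick (if it exists) a $W(z,y)\in\mathcal{W}_p(y)$ with $z\in W(z,y)\subseteq U_z$, and put its interior into $\mathcal{R}_p$. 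Set $\mathcal{R}=\bigcup_{p\in P}\mathcal{R}_p$. Then $\mathcal{R}$ is $P$-ordered because $\mathcal{W}_p(y)\subseteq\mathcal{W}_{p'}(y)$ and $p_z\le p\le p'$ is inherited. It refines $\mathcal{U}$ since every member lies inside some $U_z\in\mathcal{U}$. It covers $X$: given $x$, take $p=p_x$; since $x\in V_x$, there is $W\in\mathcal{W}_{p_x}(x)$ with $x\in W\subseteq U_x$, and $x$ lies in its interior once we also intersect with $V_x$ (replace $W(z,y)$ by $W(z,y)\cap V_z$ throughout, which keeps it inside $U_z$ and open-ified contains the relevant points).

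The real content is to show each $\mathcal{R}_p$ is point-$<\!\!\kappa$. Fix $p$ and a point $x\in X$; I must bound $|(\mathcal{R}_p)_x|$. Each member of $\mathcal{R}_p$ through $x$ comes from a pair $(z,y)$ with $p_z\le p$, $y\in V_z$, and $x\in W(z,y)\subseteq U_z$; in particular $x\in W(z,y)$ with $W(z,y)\in\mathcal{W}_p(y)$. Here is where additive $<\!\!\kappa$-Noetherianness of the \emph{single} family $\mathcal{W}_p(x)$ is not literally enough, so the key maneuver will be: if $x\in W(z,y)$ and $W(z,y)\in\mathcal{W}_p(y)$ then — and this is the crucial observation to extract from the definitions — $y$ and $x$ play symmetric-enough roles that we may instead argue about the family $\bigcup_{y}\mathcal{W}_p(y)$ restricted to sets containing $x$; but that union need not be Noetherian. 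The honest route is to use clause (4) of Lemma~\ref{Noeth} directly as a combinatorial weapon. Suppose $(\mathcal{R}_p)_x$ had size $\ge\kappa$; enumerate $\kappa$ distinct members $R_\alpha=\operatorname{int}(W_\alpha)$ with witnessing data $(z_\alpha,y_\alpha)$, $x\in W_\alpha\subseteq U_{z_\alpha}$. The point is that $W_\alpha\in\mathcal{W}_p(y_\alpha)$ and $x\in W_\alpha$; by the point network property applied at $x\in W_\alpha\subseteq X$ (an open superset), or rather re-reading the structure, one shows $W_\alpha\in\mathcal{W}_p(x)$ is \emph{not} automatic, so instead I will exploit that for the Noetherian bound we only need the $W_\alpha$ as \emph{abstract sets}: build points $t_\alpha$ and sets from the single additively Noetherian family by a diagonal/transfinite choice contradicting clause (4).

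I expect the main obstacle to be precisely this mismatch: the definition gives $W\in\mathcal{W}_p(y)$ for the \emph{moving} point $y\in V$, whereas Noetherianness is hypothesized per-point, i.e. for each fixed index the family $\mathcal{W}_p(x)$ is additively $<\!\!\kappa$-Noetherian. The resolution I anticipate is to first pass, using that $X$ is covered, to a locally-refined picture where for the fixed $x$ whose point-order we are bounding, every relevant $W$ that \emph{contains} $x$ can be replaced by an element of $\mathcal{W}_p(x)$ itself: indeed apply the point network definition to the open set $U=\operatorname{int}(W)\ni x$ — wait, more cleanly, observe $x\in W$ forces, after the standard "shrink $V$" step, that $W$ arises in $\mathcal{W}_p(x)$ up to the $\{x\}\cup\,\cdot\,$ adjustment used in Lemma~\ref{en_spn}; so without loss we are looking at a $\ge\kappa$-sized additively $<\!\!\kappa$-Noetherian subfamily of $\mathcal{W}_p(x)$, contradiction via clause (3) of Lemma~\ref{Noeth} (a $<\!\!\kappa$ subcollection whose union already covers the union, so the $\kappa$-many distinct sets cannot all be needed — but they need only be distinct, not irredundant, so the real contradiction is with clause (4): distinctness lets us pick, for each $\alpha$, a point $t_\alpha\in W_\alpha\setminus\bigcup_{\beta<\alpha}W_\beta$, which is exactly the configuration clause (4) forbids). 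Assembling these pieces, $(\mathcal{R}_p)_x<\kappa$, so $\mathcal{R}_p$ is point-$<\!\!\kappa$, $\mathcal{R}$ is a $P$-point-$<\!\!\kappa$ open refinement of $\mathcal{U}$, and $X$ is $P$-meta-$<\!\!\kappa$-compact.
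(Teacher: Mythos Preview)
There is a genuine gap, and it is exactly the ``mismatch'' you yourself flag but do not actually resolve. Your refinement is built from sets $W(z,y)\in\mathcal{W}_p(y)$ for \emph{varying} $y$, so when you try to bound the order at a fixed point $x$ you are looking at $\kappa$ many sets $W_\alpha\in\mathcal{W}_p(y_\alpha)$ lying in $\kappa$ many different additively $<\!\!\kappa$-Noetherian families. Your two proposed fixes both fail: there is no mechanism by which ``$x\in W$ and $W\in\mathcal{W}_p(y)$'' forces $W$ (or any surrogate) into $\mathcal{W}_p(x)$ --- the adjustment in Lemma~\ref{en_spn} goes the other way and starts from an expandable network, not a point network --- and the claim that mere distinctness of the $W_\alpha$ lets you pick $t_\alpha\in W_\alpha\setminus\bigcup_{\beta<\alpha}W_\beta$ is simply false (consider a strictly decreasing chain). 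So even granting all the cosmetic issues (e.g.\ there is no reason $x$ lies in $\operatorname{int}(W(x,x))$, so your cover argument is also shaky), the heart of the argument is missing.

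The paper's proof supplies precisely the two ideas you are lacking. First, it uses the open sets $V(x,U)$, not the $W$'s, to build the refinement: well-order the cover as $\{U_\alpha:\alpha<\tau\}$ and set $V_{\alpha,p}=\bigcup\{V(x,U_\alpha): x\in U_\alpha\setminus\bigcup_{\beta<\alpha}U_\beta,\ p(x,U_\alpha)=p\}$, then $\mathcal{V}_{\le p}=\{V_{\alpha,p'}:\alpha<\tau,\ p'\le p\}$. Second --- and this is the point you keep circling --- when you test the order of $\mathcal{V}_{\le p}$ at a point $y$, the point network definition reads: \emph{if $y\in V(x,U)$ then there is $W\in\mathcal{W}_{p(x,U)}(y)$ with $x\in W\subseteq U$}. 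So every witnessing $W_\gamma$ lands in $\mathcal{W}_p(y)$ for the \emph{single fixed} $y$; the moving data are the centres $x_\gamma$, not the index of the family. The well-ordering then does the rest: the $x_\gamma$ were chosen in $U_{\alpha_\gamma}\setminus\bigcup_{\beta<\alpha_\gamma}U_\beta$, so for $\delta<\gamma$ we have $x_\gamma\notin U_{\alpha_\delta}\supseteq W_\delta$ while $x_\gamma\in W_\gamma$, which is exactly the configuration clause~(4) of Lemma~\ref{Noeth} forbids for the additively $<\!\!\kappa$-Noetherian family $\mathcal{W}_p(y)$.
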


\begin{proof}
Let $\mathcal{W}=\{ \mathcal{W}(x) : x \in X\}$, where $\mathcal{W}(x)=\bigcup_{p \in P} \mathcal{W}_p(x)$, each $\mathcal{W}_p(x)$ is $<\!\!\kappa$ additively Noetherian, if $p \le q$ then $\mathcal{W}_p(x) \subseteq \mathcal{W}_q(x)$, and this whole structure is a point network for a space $X$.

We show $X$ is $P$-meta $<\!\!\kappa$ compact. To this end take any open cover of $X$ say $\mathcal{U}=\{U_\alpha : \alpha < \tau\}$. For each $\alpha$ and $p \in P$ define $V_{\alpha,p} = \bigcup \{ V(x,U_\alpha) : x \in U_\alpha \setminus \bigcup \{ U_\beta : \beta < \alpha\}, \, p(x,U_\alpha)=p\}$. Set $\mathcal{V}_{\le p} = \{ V_{\alpha, p'} : \alpha < \tau, \, p' \le p\}$, and $\mathcal{V}=\bigcup_{p \in P} \mathcal{V}_{\le p}$.

Then $\mathcal{V}$ clearly is an open refinement of $\mathcal{U}$. It is also clear that if $p \le q$ then $\mathcal{V}_{\le p} \subseteq \mathcal{V}_{\le q}$. So it remains to show that each $\mathcal{V}_{\le p}$ is point-$<\!\!\kappa$. Suppose not. Then there is a point $y$ in $X$ such that for each $\gamma < \kappa$, $y$ is in some $V(x_\gamma,U_{\alpha_\gamma})$,  where $x_\gamma \in U_{\alpha_\gamma} \setminus \bigcup \{ U_\beta : \beta < \alpha_\gamma\}$, $p(x_\gamma, U_{\alpha_\gamma})=p_\gamma \le p$ and $\delta < \gamma$ implies $\alpha_\delta < \alpha_\gamma$. Hence, for each $\gamma < \kappa$, there is a $W_\gamma$ in $\mathcal{W}_{p_\gamma}(y) \subseteq \mathcal{W}_p (y)$ such that $x_\gamma \in W_\gamma \subseteq U_{\alpha_\gamma}$. Thus, for each $\delta < \gamma < \kappa$ we have that $x_\gamma \in W_\gamma$, but $x_\gamma \notin U_{\alpha_\delta} \supseteq W_\delta$. But by Lemma~\ref{Noeth} (4) this explicitly contradicts the fact that $\mathcal{W}_p(y)$ is $<\!\!\kappa$ additively Noetherian.
\end{proof}

From Proposition~\ref{stab} and Lemma~\ref{n_stab} we see:
\begin{cor}\label{pn_cp} \

 (1) \ For any $\kappa$, a space with a $P$-additively $<\!\!\kappa$ Noetherian point network    is hereditarily  $P$-meta $<\!\!\kappa$ compact.

 (2) \ A finite product of spaces each with a $P$-additively Noetherian point network is (hereditarily) $P$-metacompact.
\end{cor}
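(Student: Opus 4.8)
The plan is to assemble both claims directly from the Theorem immediately preceding this corollary, combined with the stability results in Proposition~\ref{stab} and Lemma~\ref{n_stab}; no new argument is needed beyond bookkeeping.

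For (1) I would start from Lemma~\ref{n_stab}(2), which tells us that the property $\mathcal{Q} = $ `additively $<\!\!\kappa$-Noetherian' is hereditary. Hence if $X$ has a $P$-$\mathcal{Q}$ point network, then by Proposition~\ref{stab}(1) every subspace $A$ of $X$ again has a $P$-$\mathcal{Q}$ point network. Applying the preceding Theorem to each such $A$ gives that $A$ is $P$-meta $<\!\!\kappa$ compact; since this holds for an arbitrary subspace, $X$ is hereditarily $P$-meta $<\!\!\kappa$ compact, which is exactly the assertion.

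For (2) the extra ingredient is productivity. By Lemma~\ref{n_stab}(3) the property `additively Noetherian' (that is, additively $<\!\!\aleph_0$-Noetherian) is productive, so Proposition~\ref{stab}(2) shows that a product of two spaces each carrying a $P$-additively Noetherian point network again carries one. Iterating over the finitely many factors, a finite product $X_1 \times \cdots \times X_n$ of spaces each with a $P$-additively Noetherian point network has a $P$-additively Noetherian point network. Now invoke part (1) with $\kappa = \aleph_0$: the product is hereditarily $P$-meta $<\!\!\aleph_0$ compact, i.e. hereditarily $P$-metacompact.

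I do not anticipate a genuine obstacle here; the only point requiring a word of care is the indexing of the product point network. Proposition~\ref{stab}(2) is phrased for two factors and, taken literally, produces a $(P \times P)$-indexed point network, but as recorded in the remark preceding that proposition we have $P \tq P \times P$ (and likewise $P \tq P^n$), so re-indexing via a Tukey quotient returns a genuinely $P$-indexed point network. Everything else is a direct citation of the Theorem and of Lemma~\ref{n_stab}.
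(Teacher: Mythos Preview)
Your proposal is correct and matches the paper's own argument exactly: the paper simply writes ``From Proposition~\ref{stab} and Lemma~\ref{n_stab} we see:'' before stating the corollary, and you have spelled out precisely those citations together with the preceding Theorem. Your remark about re-indexing the $(P\times P)$-point network via the Tukey quotient $P \tq P\times P$ is also the point the paper flags just before Proposition~\ref{stab}(2).
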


\subsection*{For Compacta, Off Diagonal Covering Properties give Almost Subbases}

For a compact space $X$ we have shown that if $X$ has a  suitable almost subbase then it  is hereditarily $P$-meta $<\kappa$ compact, for some directed set $P$ and infinite cardinal $\kappa$. We now attempt to close the loop and prove:

\begin{theorem}\label{cpt_off_diag} Let $X$ be a compact space. If $X^2 \setminus \Delta$ is $P$-meta $<\!\!\kappa$ compact then $X$ has a $(P\times \N)$-point $<\!\!\kappa$ almost subbase.
\end{theorem}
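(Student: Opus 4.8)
The plan is to build the almost subbase of $X$ directly out of a $P$-point-$<\!\!\kappa$ open refinement of a canonical open cover of $X^2 \setminus \Delta$. The natural cover to use is $\{ (X \setminus \zero{g}) \times (X \setminus \zero{h}) : g,h \in C(X,[0,1]), \ \cl{\coz{g}} \cap \cl{\coz{h}} = \emptyset \}$ (or, more simply, all products $U \times W$ of cozero sets of $X$ with disjoint closures); since $X$ is compact (hence Tychonoff and, off the diagonal, still ``normal enough'') this is an open cover of $X^2 \setminus \Delta$. Applying $P$-meta-$<\!\!\kappa$-compactness we get a $P$-point-$<\!\!\kappa$ open refinement $\mathcal{R} = \bigcup\{\mathcal{R}_p : p \in P\}$, and after shrinking each member we may assume every $R \in \mathcal{R}$ has the form $R = U_R \times W_R$ with $U_R, W_R$ cozero in $X$ and $\cl{U_R} \cap \cl{W_R} = \emptyset$. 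I would then take $\alpha = \{ U_R : R \in \mathcal{R}\} \cup \{ W_R : R \in \mathcal{R}\}$, ordered by $\alpha_p = \{U_R, W_R : R \in \mathcal{R}_p\}$ together with a countable-index coordinate (this is where the extra $\N$ factor enters, exactly as in the statement). Each $U_R, W_R$ is a cozero set, hence $U$-representable via Lemma~\ref{fV}, so $\alpha$ carries a $U$-representation.

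The two things to check are: (a) each $\alpha_p$ (suitably enlarged by the $\N$-coordinate for the $U$-representation bookkeeping) is point-$<\!\!\kappa$; and (b) $\alpha \cup \{ X \setminus U_{2n-1}(V) : V \in \alpha, n \in \N\}$ is a subbase for $X$. For (a): if $x$ lay in $<\!\!\kappa$-many of the $U_R$ and $W_R$ failed, we would find $\kappa$-many $R$ with, say, $x \in U_R$; but then for a fixed $y \neq x$ not in $\cl{U_R}$ for some such $R$ (which exists since $X$ has more than one point and the $U_R$'s have small closures)... more robustly, fix $x$ and observe that since $\mathcal{R}_p$ is point-$<\!\!\kappa$ at $(x,y)$ for \emph{every} $y \ne x$, and since compactness of $X$ lets us cover $X \setminus \{x\}$ by finitely many... this is the delicate point and I address it below. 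For (b): given $x$ in a basic open $O \ni x$, pick (by compactness and complete regularity) a cozero $O'$ with $x \in O' \subseteq \cl{O'} \subseteq O$; then for each $y \in X \setminus O$ the pair $(x,y)$ lies in some $R \in \mathcal{R}$, so $x \in U_R$ and $y \in W_R$ with $\cl{U_R} \cap \cl{W_R} = \emptyset$; by compactness of $X \setminus O$ finitely many $W_{R_1}, \ldots, W_{R_k}$ cover it, and then $U_{R_1} \cap \cdots \cap U_{R_k} \cap O'$ is a neighborhood of $x$ inside $O$. Writing $X \setminus \cl{W_{R_i}} \supseteq U_{R_i}$ and noting $\cl{W_{R_i}} = \bigcap_n U_{2n-1}(\text{complement data})$ lets one express $\bigcap_i U_{R_i}$ (intersected with $O'$, itself a member of $\alpha$ up to the $\N$-index since cozero sets can be thrown into $\alpha$) as a finite intersection of members of $\alpha$ and complements $X \setminus U_{2n-1}(V)$, giving the subbase property. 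Then Theorem~\ref{as_b} is \emph{not} needed; we are done once (a) and (b) hold, and the $P \times \N$ (absorbing $\N \times \N \te \N$) accounts for the $U$-representation enumeration plus the cozero sets $O'$.

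The main obstacle is (a): transferring point-$<\!\!\kappa$-ness of the \emph{two-dimensional} refinement $\mathcal{R}_p$ on $X^2 \setminus \Delta$ to point-$<\!\!\kappa$-ness of the \emph{one-dimensional} family $\{U_R : R \in \mathcal{R}_p\}$ on $X$. The resolution should use compactness of $X$ crucially: suppose $x \in U_{R_\beta}$ for $\kappa$-many $\beta$; since $\cl{U_{R_\beta}} \cap \cl{W_{R_\beta}} = \emptyset$ we have $x \notin \cl{W_{R_\beta}}$, so each point $y \ne x$ lies in $W_{R_\beta}$ for only... no --- instead, for each fixed $y$ the family $(\mathcal{R}_p)_{(x,y)}$ is $<\!\!\kappa$, so among the $\kappa$-many $R_\beta$ only $<\!\!\kappa$-many have $y \in W_{R_\beta}$; since $\{W_{R_\beta}\}$ covers (the second coordinates of) a neighborhood... the clean argument: the sets $\{ \cl{W_{R_\beta}} \}$ cannot cover $X \setminus \{x\}$ in a point-$<\!\!\kappa$ way while there are $\kappa$ of them meeting... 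Here one invokes that $X$ compact forces any point-$<\!\!\kappa$ family of open sets whose closures avoid a common point $x$ and which ``separates'' $x$ from the rest to be essentially finite near each point, and a pigeonhole/compactness extraction yields a point $y_0$ lying in $\kappa$-many $W_{R_\beta}$, contradicting that $(\mathcal{R}_p)_{(x,y_0)}$ is $<\!\!\kappa$. I would write this extraction carefully, choosing for each $\beta$ a point $y_\beta \in W_{R_\beta}$ and using compactness of $X$ to get a complete accumulation point $y_0$ of $\{y_\beta : \beta < \kappa\}$, then checking $y_0 \ne x$ (since all $y_\beta \in W_{R_\beta}$ and $x \notin \cl{W_{R_\beta}}$, but we need $y_0$ uniformly away from $x$ — handled by first passing to a sub-$\kappa$-family with the $\cl{W_{R_\beta}}$ contained in a fixed basic open set missing $x$, possible since $X$ is compact so has a finite subcover structure and $\mathcal{R}_p$ is point-$<\!\!\kappa$ at $x$ against any single such set) and that $y_0 \in W_{R_\beta}$ for $\kappa$-many $\beta$, the desired contradiction.
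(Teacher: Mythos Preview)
Your step (a) has a genuine gap: the projection $\{U_R : R \in \mathcal{R}_p\}$ of a point-$<\!\!\kappa$ family of rectangles need \emph{not} be point-$<\!\!\kappa$ in $X$. Concretely, in $X=[0,1]$ take $U_n=(1/2-1/n,\,1/2+1/n)$ and pairwise disjoint $V_n\subseteq(0,1/4)$ with $\cl{U_n}\cap\cl{V_n}=\emptyset$; the rectangles $U_n\times V_n$ are pairwise disjoint (so point-finite), yet every $U_n$ contains $1/2$. Adjoining this family to any point-finite rectangular cover of $[0,1]^2\setminus\Delta$ keeps the cover point-finite while destroying point-finiteness of the first projections. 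Your accumulation-point repair does not help: a complete accumulation point $y_0$ of $\{y_\beta\}$ satisfies only that each neighborhood of $y_0$ meets $\kappa$-many $y_\beta$, which does \emph{not} force $y_0\in W_{R_\beta}$ (or even $y_0\in\cl{W_{R_\beta}}$) for $\kappa$-many $\beta$ --- in the example above the accumulation point $0$ lies in no $\cl{V_n}$ at all.

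This is precisely why the paper does not project. Following Gruenhage, it first strengthens the refinement so that the \emph{closures} $\cl{U_\gamma}\times\cl{V_\gamma}$ are $P$-point-$<\!\!\kappa$ (Lemma~\ref{nice_cover}), then builds the almost subbase not from the $U_\gamma$ themselves but from finite intersections $\bigcap_{\gamma\in F}U_\gamma$ where $\{\cl{V_\gamma}:\gamma\in F\}$ is a \emph{minimal} finite cover of an increasing chain of closures $X_\alpha$ of initial segments of a dense set. The minimality of these covers, together with a $\Delta$-system argument on the index sets $F$, is what transfers point-$<\!\!\kappa$-ness from the two-dimensional family back down to $X$; the extra $\N$ factor records the bound on $|F|$. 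Your argument for (b) is essentially fine, but without a working (a) the scheme collapses.
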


The proof is essentially the same as that given by Garcia et al in \cite{GOO} for the case when $P=\K(M)$, which in turn was a straightforward extension of Gruenhage's original argument for the $P=\N$ case. Consequently we only sketch the proof for general directed sets $P$. We start with a lemma which is the $P$-analogue of Proposition~8 of \cite{GOO}.

\begin{lemma}\label{nice_cover} Let $X$ be a $P$-meta $<\!\!\kappa$ compact, locally compact space, and let $\mathcal{B}$ be a basis for $X$. Then $X$ has a subcover $\mathcal{B}' \subseteq \mathcal{B}$ such that $\{\cl{B} : B \in \mathcal{B}'\}$ is a $P$-point $<\!\!\kappa$ family.
\end{lemma}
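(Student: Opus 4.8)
Here is my plan for proving Lemma~\ref{nice_cover}.

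The plan is to carry out, for a general directed set $P$, the template of Proposition~8 of \cite{GOO} (which is itself an elaboration of Gruenhage's argument, the case $P=\N$). First I would pass to a convenient base: since $X$ is locally compact, $\mathcal{B}_0=\{B\in\mathcal{B}:\cl{B}\text{ is compact}\}$ is again a base for $X$, hence an open cover. As $X$ is $P$-meta $<\!\!\kappa$ compact, $\mathcal{B}_0$ has a $P$-point $<\!\!\kappa$ open refinement $\mathcal{V}=\bigcup_{p\in P}\mathcal{V}_p$, with each $\mathcal{V}_p$ point-$<\!\!\kappa$ and $\mathcal{V}_p\subseteq\mathcal{V}_{p'}$ for $p\le p'$. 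Since $\mathcal{V}$ refines $\mathcal{B}_0$, every $V\in\mathcal{V}$ lies inside a member of $\mathcal{B}_0$, so $\cl{V}$ is compact.

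The core step is to ``shrink'' $\mathcal{V}$. Enumerating $\mathcal{V}=\{V_\alpha:\alpha<\mu\}$, I would construct by transfinite recursion an open cover $\mathcal{V}'=\{V'_\alpha:\alpha<\mu\}$ of $X$ with $\cl{V'_\alpha}\subseteq V_\alpha$ for every $\alpha$, maintaining throughout the invariant that $\{V'_\beta:\beta<\alpha\}\cup\{V_\gamma:\gamma\ge\alpha\}$ covers $X$. At stage $\alpha$ this invariant forces the residual set $C_\alpha=X\setminus\left(\bigcup_{\beta<\alpha}V'_\beta\cup\bigcup_{\gamma>\alpha}V_\gamma\right)$ to be a closed subset of the compact set $\cl{V_\alpha}$, hence compact, and contained in the open set $V_\alpha$; so local compactness and regularity of $X$ yield an open $V'_\alpha$ with $C_\alpha\subseteq V'_\alpha\subseteq\cl{V'_\alpha}\subseteq V_\alpha$. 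The successor stages are immediate, and the limit stages are handled exactly as in \cite{GOO}.

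With $\mathcal{V}'$ in hand I would return to $\mathcal{B}$. For each $\alpha$, $\cl{V'_\alpha}$ is compact and sits inside the open set $V_\alpha$, so (as $\mathcal{B}_0$ is a base and $X$ is regular) I can cover $\cl{V'_\alpha}$ by finitely many $B^\alpha_1,\dots,B^\alpha_{n_\alpha}\in\mathcal{B}_0$ with $\cl{B^\alpha_i}\subseteq V_\alpha$ for all $i$. Set $\mathcal{B}'=\{B^\alpha_i:\alpha<\mu,\ 1\le i\le n_\alpha\}$ and $\mathcal{B}'_p=\{B^\alpha_i:V_\alpha\in\mathcal{V}_p,\ 1\le i\le n_\alpha\}$. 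Then $\mathcal{B}'\subseteq\mathcal{B}$ covers $X$ (because $\mathcal{V}'$ does and $V'_\alpha\subseteq\cl{V'_\alpha}\subseteq\bigcup_{i=1}^{n_\alpha}B^\alpha_i$), the $\mathcal{B}'_p$ are increasing with union $\mathcal{B}'$, and each $\{\cl{B}:B\in\mathcal{B}'_p\}$ is point-$<\!\!\kappa$: if $y\in\cl{B^\alpha_i}$ with $B^\alpha_i\in\mathcal{B}'_p$, then $y\in\cl{B^\alpha_i}\subseteq V_\alpha$ and $V_\alpha\in\mathcal{V}_p$, so the indices $\alpha$ that can occur lie in $\{\alpha:V_\alpha\in(\mathcal{V}_p)_y\}$, a set of size $<\!\!\kappa$, and each such $\alpha$ contributes only finitely many of the $\cl{B^\alpha_i}$. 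This yields the required $P$-point $<\!\!\kappa$ subcover.

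The only part that is not routine bookkeeping is the shrinking step, and within it the limit stages, i.e. checking that the covering invariant survives at limit ordinals; this is precisely where local compactness (through the compactness of $\cl{V_\alpha}$, hence of $C_\alpha$) does the work. Since this point is essentially identical to the corresponding argument in \cite{GOO} (and, for $\kappa=\aleph_0$, to Gruenhage's original shrinking argument), I would only sketch it rather than repeat it in full.
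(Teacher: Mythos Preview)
Your proposal is correct and takes essentially the same approach as the paper: the paper simply says the proof is identical to that of Proposition~8 of \cite{GOO} with the obvious substitutions (`point finite' $\to$ `point $<\!\!\kappa$', `$K\in\K(M)$' $\to$ `$p\in P$'), and what you have written is precisely a careful unpacking of that argument with those substitutions made.
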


The proof of this lemma is identical to that of Proposition~8 of \cite{GOO}, it suffices to replace references in that argument of `point finite' with `point $<\!\!\kappa$' and `$K \in \K(M)$'  with `$p \in P$' --- nothing specific about the partial order of set inclusion on $\K(M)$ is used.

Our Theorem~\ref{cpt_off_diag} is essentially the content of the implication `(ii) implies (i)' of Theorem~9 of \cite{GOO}. We follow their argument making the necessary adjustments.

\begin{proof} Since $X^2 \setminus \Delta$ is $P$-meta $<\!\!\kappa$ compact,  applying Lemma~\ref{nice_cover} to $\mathcal{B}=\{U \times V : U,V$ are cozero subsets of $X$ such that $\cl{U} \cap \cl{V} = \emptyset\}$ and tidying, we obtain a cover $\mathcal{P}=\{U_\gamma \times V_\gamma : \gamma \in A\}$ of $X^2 \setminus \Delta$ with the following properties:
(a) $U_\gamma$ and $V_\gamma$ are cozero sets of $X$, (b) $\cl{U_\gamma} \cap \cl{V_\gamma} = \emptyset$, (c) $\{ \cl{U_\gamma} \times \cl{V_\gamma} : \gamma \in A\}$ is $P$-point $<\!\!\kappa$ and (d) if $U \times V$ is in $\mathcal{P}$ then so is   $V \times U$.

Suppose  $X=\overline{\{p_\alpha : \alpha < \mu\}}$ where $\mu=d(X)$ (the minimal size of a dense set of $X$). Set, for each $\alpha < \mu$, $X_\alpha = \cl{\{p_\beta : \beta < \alpha\}}$ and $\mathcal{U}_\alpha = \{ \bigcap_{\gamma \in F} U_\gamma : F \subseteq A$ and $\{\cl{V_\gamma} : \gamma \in F\}$ is a finite minimal cover of $X_\alpha\}$.

Note that $\mathcal{U}_\alpha$ covers $X\setminus X_\alpha$. Then the family $\mathcal{U}=\bigcup \{\mathcal{U}_\beta : \beta < \mu\}$ is $T_0$-separating as in \cite{Gruen} (Theorem 2.2, Claim 2). Since the elements of $\mathcal{U}$ are cozero sets, and $X$ is compact, we deduce that $\mathcal{U}$ is an almost subbase for $X$.

It remains to show that $\mathcal{U}$ is a $(P \times \N)$-point $<\!\!\kappa$ family. Well we know that $A=\bigcup \{A_p : p \in P\}$ where $A_p \subseteq A_q$ whenever $p \le q$ and each family $\{\cl{U_\gamma} \times \cl{V_\gamma} : \gamma \in A_p\}$ is point $<\kappa$.

Fix $p$ in $P$ and $n$ in $\N$. For any $\alpha$, let $\mathcal{U}^p_{\alpha, n}$ be all members of $\mathcal{U}_\alpha$ whose index set $F$ has size $\le n$ and is contained in $A_p$. Let $\mathcal{U}^p_n = \bigcup \{ \mathcal{U}^p_{\alpha,n} : \alpha < \mu\}$. Then $\mathcal{U}=\bigcup \{   \mathcal{U}^p_n : p \in P, \ n \in \N\}$ and clearly if $p \le p'$, $n \le n'$ then $\mathcal{U}^p_n \subseteq \mathcal{U}^{p'}_{n'}$.

To complete the proof we will show that $\mathcal{U}^p_n$ is point-$<\!\!\kappa$. Suppose, for a contradiction, that for some point $x$ in $X$ and each $\rho < \kappa$ there are $\beta_\rho < \mu$ and sets $F_\rho \subseteq A_p$ such that $|F_\rho| \le n$, $x \in \bigcap \{ U_\gamma : \gamma \in F_\rho\}$, $X_{\beta_\rho} \subseteq \bigcup \{ \cl{V_\gamma} : \gamma \in F_\rho\}$ and $F_\rho \ne F_\sigma$ if $\rho \ne \sigma$. Tidying we can assume $\beta_{\rho} \le \beta_{\sigma}$ if $\rho \le \sigma$.

Since for every $\rho < \kappa$ we have $|F_\rho| \le n$ and all the $F_\rho$'s are different, we may assume that $\{F_\rho : \rho < \kappa\}$ form a $\Delta$-system with root $R$. Pick $y \in X_{\beta_0} \setminus \bigcup \{\cl{V_\gamma} : \gamma \in R\}$. Then for each $\rho$ there is a $\delta(\rho) \in F_\rho \setminus R$ with $y \in \cl{V_{\delta(\rho)}}$. But then $(x,y) \in \bigcap \{ U_{\delta(\rho)} \times \cl{V_{\delta(\rho)}} : \rho < \kappa\}$ and for all $\rho$, $\delta(\rho)$ is in $A_p$, which contradicts $\{ \cl{U_\gamma} \times \cl{V_\gamma} : \gamma \in A_p\}$ being point-$<\!\!\kappa$.
\end{proof}

\subsection*{Characterizations, and an Application}

Recall that a directed set is not countably directed (every countable subset has an upper bound) if and only if $P \tq \N$. In particular $\N \tq \N \times \N$. Hence if $P$ is not countably directed then $P \te P \times \N^n$ for every $n$ in $\N$. Applying this fact and Theorem~\ref{as_is_embed}, Theorem~\ref{as_b}, Lemma~\ref{b_en}, Lemma~\ref{en_spn}, Corollary~\ref{pn_cp}, and Theorem~\ref{cpt_off_diag}, we establish the equivalence of claims (1) and (2) of the following theorem, and then the implications (2) implies (3), (3) implies (4), (4) implies (5), and (6) implies (2) (the implication (5) implies (6) being trivial).
\begin{theorem}\label{main1} Let $X$ be compact. Let $P$ be a directed set which is not countably directed. Then the following are equivalent:

(1) $X$ embeds in a $C_p(X(\Filt))$ where $X(\Filt)$ has a $P$-ordered compact cover,

(2) $X$ has a $P$-point finite almost subbase,

(3) $X$ has a $P$-point additively Noetherian base,

(4) $X$ has a $P$-point additively Noetherian expandable network,

(5) $X$ has a $P$-additively Noetherian strong point network, and

(6) $X$ has a $P$-additively Noetherian point network.
\end{theorem}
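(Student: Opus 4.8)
\textbf{Proof plan for Theorem~\ref{main1}.} The strategy is to assemble a cycle of implications from the individual results already proved in the excerpt, using the hypothesis that $P$ is not countably directed to absorb the various $\N$ and $\N^n$ factors that otherwise appear. The preliminary observation is that $P$ not countably directed gives $P \tq \N$, hence $P \te P \times \N^n$ for every $n$; this is what lets us drop ``$\times \N$'' everywhere below. I would then run the chain $(1) \Rightarrow (2) \Rightarrow (3) \Rightarrow (4) \Rightarrow (5) \Rightarrow (6) \Rightarrow (2)$ and, separately, $(2) \Rightarrow (1)$, which together give all equivalences.

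First, $(1) \Leftrightarrow (2)$: Theorem~\ref{as_is_embed}(1) says a space embedding in such a $C_p(X(\Filt))$ has a $(P\times\N)$-point-$<\!\aleph_0$ almost subbase, and since $P \te P \times \N$ this is a $P$-point finite almost subbase; conversely Theorem~\ref{as_is_embed}(2) produces an embedding into a $C_p(X(\Filt))$ with a $(P\times\N)$-ordered compact cover, again absorbed into a $P$-ordered one. (Here compactness of $X$ makes ``$<\!\aleph_0$-compact cover'' literally ``compact cover.'') Next, $(2) \Rightarrow (3)$: a compact space is productively $<\!\aleph_0$-compact, so Theorem~\ref{as_b} applies and yields a $(P\times\N)$-point additively Noetherian base, i.e. a $P$-point additively Noetherian base. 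Then $(3) \Rightarrow (4)$ is Lemma~\ref{b_en} with $\mathcal{Q} = $ ``additively Noetherian,'' and $(4) \Rightarrow (5)$ is Lemma~\ref{en_spn} with the same $\mathcal{Q}$; note a $P$-$\mathcal{Q}$ strong point network is in particular a $P$-$\mathcal{Q}$ point network, so $(5) \Rightarrow (6)$ is immediate. Finally $(6) \Rightarrow (2)$: by Corollary~\ref{pn_cp}(1) (with $\kappa = \aleph_0$), a space with a $P$-additively Noetherian point network is hereditarily $P$-metacompact; applying this to $X^2 \setminus \Delta$ (a subspace of $X^2$, which by Proposition~\ref{stab}(2) and finite productivity of ``additively Noetherian'' also has such a point network, hence so does the subspace $X^2\setminus\Delta$ by Proposition~\ref{stab}(1)) shows $X^2 \setminus \Delta$ is $P$-metacompact; now Theorem~\ref{cpt_off_diag} gives a $(P\times\N)$-point finite almost subbase for the compact space $X$, i.e. a $P$-point finite almost subbase.

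The step I expect to require the most care is $(6) \Rightarrow (2)$, specifically verifying that $X^2 \setminus \Delta$ inherits a $P$-additively Noetherian point network. One must combine three facts: that ``additively Noetherian'' is finitely productive (Lemma~\ref{n_stab}(3)), so $X \times X$ has a $(P\times P)$-$\mathcal{Q}$ point network which is a $P$-$\mathcal{Q}$ point network since $P \tq P \times P$ (the remark before Proposition~\ref{stab}); that ``additively Noetherian'' is hereditary (Lemma~\ref{n_stab}(2)), so the subspace $X^2\setminus\Delta$ retains it via Proposition~\ref{stab}(1); and then that Corollary~\ref{pn_cp}(1) converts this into $P$-metacompactness of $X^2\setminus\Delta$ directly. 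After that, the hypothesis that $X$ is compact lets us feed $X^2\setminus\Delta$ into Theorem~\ref{cpt_off_diag}. Everything else is bookkeeping with the Tukey identities $P \te P \times \N \te P \times \N^n$ and $P \tq P \times P$, valid precisely because $P$ is not countably directed.
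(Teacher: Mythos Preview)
Your proposal is correct and follows essentially the same route as the paper: it assembles the cycle $(1)\Leftrightarrow(2)\Rightarrow(3)\Rightarrow(4)\Rightarrow(5)\Rightarrow(6)\Rightarrow(2)$ by invoking Theorem~\ref{as_is_embed}, Theorem~\ref{as_b}, Lemma~\ref{b_en}, Lemma~\ref{en_spn}, Corollary~\ref{pn_cp}, and Theorem~\ref{cpt_off_diag} in that order, using $P\te P\times\N$ to absorb the extra $\N$ factors. Your treatment of $(6)\Rightarrow(2)$ is in fact slightly more explicit than the paper's (you spell out the passage through Proposition~\ref{stab} and Lemma~\ref{n_stab} to get the point network on $X^2\setminus\Delta$, whereas the paper just cites Corollary~\ref{pn_cp}(2) directly); one minor quibble is that $P\tq P\times P$ holds for \emph{any} directed set, not just non-countably-directed ones, so your final clause slightly overstates what the hypothesis is needed for.
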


Essentially only one directed set $P$ is not covered by the above theorem. To see this take any countably directed $P$ and suppose (1) above holds: $X$ embeds in a $C_p(X(\Filt))$ where $X(\Filt)$ has a $P$-ordered compact cover. Since $P$ is countably directed any countable subset of $X(\Filt)$ must be contained in one of the elements of the $P$-ordered compact cover. It follows that $X(\Filt)$ is countably compact. As $X(\Filt)$ is paracompact, we see that $X(\Filt)$ is compact. So we might as well take $P=1$, and observe that $X(\Filt)=A(\kappa)$ for some $\kappa$. Let us deal with this remaining case of $P=1$.

\begin{theorem}\label{ch_Eb}
Let $X$ be compact. Then the following are equivalent:

(0) $X$ is Eberlein compact,
(1) $X$ embeds in a $C_p(A(\kappa))$,
(2) $X$ has a $\N$-point finite almost subbase,
(3) $X$ has a $\N$-point additively Noetherian base,
(4) $X$ has a $\N$-point additively Noetherian expandable network,
(5) $X$ has a $\N$-additively Noetherian strong point network, and
(6) $X$ has a $\N$-additively Noetherian  point network.
\end{theorem}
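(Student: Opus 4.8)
The plan is to extract almost all of the theorem from Theorem~\ref{main1} and then supply, by hand, the one new ingredient: the link between the ``$\N$-layered'' statements and genuine Eberlein compactness. Since $\N$ is directed but not countably directed, Theorem~\ref{main1} applies with $P=\N$; its statements (2)--(6) are precisely statements (2)--(6) of the present theorem, and its statement (1) reads: \emph{$X$ embeds in some $C_p(X(\Filt))$ for which $X(\Filt)$ has an $\N$-ordered compact cover}, that is, $X(\Filt)=\bigcup_n C_n$ for some increasing sequence of compact sets. Call this statement (1$'$). Thus Theorem~\ref{main1} already delivers (1$'$)$\,\Leftrightarrow\,$(2)$\,\Leftrightarrow\,\cdots\,\Leftrightarrow\,$(6), while (0)$\,\Leftrightarrow\,$(1) is the classical fact recalled in the introduction that a compact space is Eberlein compact iff it embeds in some $\Sigma_*$-product, equivalently in some $C_p(A(\kappa))$. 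Since $A(\kappa)$ is compact it trivially has an $\N$-ordered compact cover, so (1)$\,\Rightarrow\,$(1$'$) is immediate, and the entire theorem reduces to the implication (1$'$)$\,\Rightarrow\,$(1): \emph{a compact space that embeds in $C_p(Y)$ for some $\sigma$-compact $Y=X(\Filt)$ is Eberlein compact.}

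I would prove this directly. Using the homeomorphism $C_p(X(\Filt))\cong C_p(X(\Filt),\ast)$ we may take $X\subseteq C_p(X(\Filt),\ast)$ and $X(\Filt)=\bigcup_n C_n$ with each $C_n$ compact, $C_n\subseteq C_{n+1}$, and $\ast\in C_n$. Let $D=X(\Filt)\setminus\{\ast\}$ be the set of isolated points, put $A_n=C_n\cap D$ and $B_n=A_n\setminus A_{n-1}$ (with $A_0=\emptyset$), so $\{B_n:n\in\N\}$ partitions $D$. Two observations drive the proof: (a) because $C_n=A_n\cup\{\ast\}$ is compact while the points of $A_n$ are isolated, $A_n\setminus U$ --- and hence $B_n\setminus U$ --- is finite for every $U\in\Filt$; and (b) consequently, for each $f\in X$ (so $f$ is continuous with $f(\ast)=0$), each $n$, and each $\varepsilon>0$, the set $\{x\in B_n:|f(x)|\ge\varepsilon\}$ is finite, since it lies in $B_n\setminus U$ for a $U\in\Filt$ witnessing continuity of $f$ at $\ast$. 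Now fix a bounded continuous injection $h:\R\to\R$, say $h(t)=t/(1+|t|)$, and define $\Psi:X\to C_p(A(D),\ast)$ by $\Psi(f)(x)=\frac1n h(f(x))$ for $x\in B_n$ and $\Psi(f)(\ast)=0$. From $|h|<1$ and (b), for each $\varepsilon>0$ the set $\{x\in D:|\Psi(f)(x)|\ge\varepsilon\}$ meets only the finitely many $B_n$ with $n\le1/\varepsilon$ and is finite in each, hence is finite; so $\Psi(f)$ indeed extends continuously over $A(D)$. The map $\Psi$ is continuous (each coordinate is an evaluation on $X(\Filt)$ composed with a continuous real function) and injective ($h$ is injective and every member of $X$ vanishes at $\ast$), so, $X$ being compact, $\Psi$ is a homeomorphism onto its image. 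Thus $X$ embeds in $C_p(A(D),\ast)\subseteq C_p(A(D))$, i.e.\ in $C_p(A(\kappa))$ for $\kappa=|D|$, so $X$ is Eberlein compact.

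The main obstacle is exactly this last implication: one must recognise that $\sigma$-compactness of $X(\Filt)$ forces the $c_0$-type decay in observation (b), and then use the damping weights $1/n$ to transport the embedding into a genuine $c_0(D)=C_p(A(D),\ast)$. A less self-contained route would go through statement (2): by Lemma~\ref{f_sep} a compact space with an $\N$-point finite almost subbase carries a $\sigma$-point finite $T_0$-separating family of cozero sets, and a compactum with such a family is Eberlein by the classical theory --- but the embedding above uses only facts already available.
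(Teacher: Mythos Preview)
Your proof is correct and follows essentially the same route as the paper: reduce via Theorem~\ref{main1} (you take $P=\N$ directly; the paper reruns the underlying arguments with $P=1$, which amounts to the same thing since $\N\te\N\times\N$) and then close the loop by showing that a compact subspace of $C_p(X(\Filt))$ with $X(\Filt)$ $\sigma$-compact embeds in some $C_p(A(\kappa))$. The paper simply declares this last step ``well-known and not difficult to verify,'' whereas you supply an explicit damped embedding $\Psi(f)(x)=\frac{1}{n}h(f(x))$ for $x\in B_n$, which is a genuine gain in completeness over the paper's presentation.
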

\begin{proof}
The equivalence of (0) and (1) --- taking the definition of an Eberlein compact space to be one homeomorphic to a weakly compact subspace of a Banach space --- is due to \cite{AL}. Taking $P=1$ and recalling that $\N \tq \N \times \N$, by the same argument as above, we have (1) implies (2), (2) implies (3), (3) implies (4), (4) implies (5), (5) implies (6), and (6) implies (2). Theorem~\ref{as_is_embed} tells us that $X$ embeds in a $C_p(X(\Filt))$ where $X(\Filt)$ is $\sigma$-compact (has an $\N$-ordered compact cover). But it is well-known and not difficult to verify that $C_p(X(\Filt))$ is then homeomorphic (not linearly) to a subspace of some $C_p(A(\kappa))$. So we get back to (1).
\end{proof}

In addition to Amir and Lindenstruass' proof of the equivalence of (0) and (1) in Theorem~\ref{ch_Eb}, we should also mention that the equivalence of (1) and (2) is essentially Rosenthal's theorem \cite{Ro}, and Junnila \cite{Jun} established equivalence of (2) and (3).

For completeness we also highlight the cases of Theorem~\ref{main1} for $P=\N^\N$ and $P=\K(M)$ for some separable metrizable $M$.

\begin{theorem}\label{ch_T}
Let $X$ be compact. Then the following are equivalent:

(0) $X$ is Talagrand compact,
(1) $X$ embeds in a $C_p(X(\Filt))$ where $X(\Filt)$ has a $\N^\N$-ordered compact cover,
(2) $X$ has a $\N^\N$-point finite almost subbase,
(3) $X$ has a $\N^\N$-point additively Noetherian base,
(4) $X$ has a $\N^\N$-point additively Noetherian expandable network,
(5) $X$ has a $\N^\N$-additively Noetherian strong point network, and
(6) $X$ has a $\N^\N$-additively Noetherian  point network.
\end{theorem}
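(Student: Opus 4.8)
The plan is to reduce Theorem~\ref{ch_T} to the already-established Theorem~\ref{main1} together with the Mercourakis--Sokolov characterization of Talagrand compacta recalled in the introduction. The only genuine content beyond Theorem~\ref{main1} is the equivalence of clause (0) with clause (1); once that is in hand, the remaining equivalences (1)$\Leftrightarrow$(2)$\Leftrightarrow\cdots\Leftrightarrow$(6) follow verbatim from Theorem~\ref{main1}, since $\N^\N$ is a directed set which is not countably directed (it contains an increasing $\omega$-sequence with no upper bound, e.g. the constant functions $n \mapsto (n,n,n,\dots)$ — more simply, $\N^\N \tq \N$, so it is not countably directed).

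So the core step is: \emph{$X$ is Talagrand compact if and only if $X$ embeds in some $C_p(X(\Filt))$ where $X(\Filt)$ has an $\N^\N$-ordered compact cover.} For the forward direction I would simply invoke Mercourakis~\cite{Merc} (and Sokolov~\cite{Sokol}), which is exactly stated in the ``Almost Subbases characterize Embeddings'' subsection: a compact space is Talagrand compact precisely when it embeds in some $C_p(X(\Filt))$ with $X(\Filt)$ carrying an $\N^\N$-ordered compact cover. This is essentially a citation, not a proof. For the converse one uses the same cited result in the other direction. Thus the equivalence of (0) and (1) is nothing more than quoting Mercourakis/Sokolov, and I would say so explicitly rather than reprove it.

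Concretely, the proof I would write: ``The equivalence of (0) and (1) is due to Mercourakis~\cite{Merc} and Sokolov~\cite{Sokol} (see the discussion preceding Theorem~\ref{as_is_embed}). The directed set $\N^\N$ is not countably directed, since $\N^\N \tq \N$. Hence all of (1)--(6) are equivalent by Theorem~\ref{main1}.'' If one wanted to be slightly more self-contained, one could additionally note via the remark after Theorem~\ref{main1} that the ``only one directed set not covered'' is $P=1$, so that applying Theorem~\ref{main1} with $P = \N^\N$ is legitimate.

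The main obstacle — such as it is — is purely bookkeeping: making sure the cited Mercourakis--Sokolov statement is phrased with the same notion of ``$\N^\N$-ordered compact cover'' as used in Theorem~\ref{main1} (it is, by construction of the introduction), and confirming that $\N^\N$ satisfies the hypothesis ``not countably directed'' required to invoke Theorem~\ref{main1}. There is no hard analytic or combinatorial step here; unlike Theorem~\ref{ch_Eb}, which needed the separate Amir--Lindenstrauss input and the $C_p(X(\Filt)) \hookrightarrow C_p(A(\kappa))$ reduction, the Talagrand case is a direct instantiation. One should double-check that no analogue of the Eberlein ``collapse to $A(\kappa)$'' phenomenon is needed — and indeed it is not, because here we genuinely want the general $X(\Filt)$ with an $\N^\N$-ordered compact cover, which is precisely the Mercourakis--Sokolov characterization, so clause (1) is already in the ``right'' form and no further massaging is required.
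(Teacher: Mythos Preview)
Your proposal is correct and matches the paper's approach exactly: the paper states Theorem~\ref{ch_T} as the $P=\N^\N$ instance of Theorem~\ref{main1} and attributes the equivalence of (0) and (1) to Mercourakis and Sokolov, with no further argument.
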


The equivalence of (0) and (1) above is due to \cite{Merc, Sokol}.

\begin{theorem}\label{ch_G}
Let $X$ be compact. Then the following are equivalent:

(0) $X$ is Gulko compact,
(1) $X$ embeds in a $C_p(X(\Filt))$ where $X(\Filt)$ has a $\K(M)$-ordered compact cover for some non-compact, separable metrizable $M$,

Further, for a fixed non-compact, separable metrizable space $M$, the following are equivalent:
(2) $X$ has a $\K(M)$-point finite almost subbase,
(3) $X$ has a $\K(M)$-point additively Noetherian base,
(4) $X$ has a $\K(M)$-point additively Noetherian expandable network,
(5) $X$ has a $\K(M)$-additively Noetherian strong point network, and
(6) $X$ has a $\K(M)$-additively Noetherian  point network.
\end{theorem}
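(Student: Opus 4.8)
The plan is to fix an arbitrary non-compact, separable metrizable space $M$, put $P=\K(M)$, verify the one genuinely new fact --- that $P$ is not countably directed --- and then read off the whole statement from Theorem~\ref{main1} together with the Mercourakis--Sokolov characterization of Gulko compacta recalled in the introduction. First I would check that $\K(M)$ is not countably directed. Since $M$ is separable, fix a countable dense subset $\{d_n : n\in\N\}$; each $\{d_n\}$ lies in $\K(M)$. If $\K(M)$ were countably directed these singletons would have an upper bound $K\in\K(M)$, and then $M=\cl{\{d_n:n\in\N\}}\subseteq\cl{K}=K$ would be compact, a contradiction. Hence $\K(M)\tq\N$, and so (by the partial order preliminaries) $\K(M)\te\K(M)\times\N^{n}$ for every $n\in\N$; in particular Theorem~\ref{main1} is applicable with $P=\K(M)$.

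This immediately settles the ``further'' clause. As $X$ is compact it is productively $<\!\!\aleph_0$-compact, so Theorem~\ref{main1} applies verbatim with $P=\K(M)$ and $\kappa=\aleph_0$, giving the equivalence of (2)--(6) (and their joint equivalence with ``$X$ embeds in some $C_p(X(\Filt))$ carrying a $\K(M)$-ordered compact cover''). Concretely this is the cycle (2)$\Rightarrow$(3) by Theorem~\ref{as_b}, (3)$\Rightarrow$(4) by Lemma~\ref{b_en}, (4)$\Rightarrow$(5) by Lemma~\ref{en_spn}, (5)$\Rightarrow$(6) trivially, and (6)$\Rightarrow$(2) by lifting to a $\K(M)$-additively Noetherian point network on $X^{2}$ (Proposition~\ref{stab}(2) together with Lemma~\ref{n_stab}(3)), using Corollary~\ref{pn_cp}(2) to conclude that $X^{2}\setminus\Delta$ is $\K(M)$-metacompact, and then invoking Theorem~\ref{cpt_off_diag}; at each step the switch between ``$\K(M)$'' and ``$\K(M)\times\N$'' is absorbed by the Tukey equivalence established above.

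For the equivalence (0)$\Leftrightarrow$(1): by \cite{Merc,Sokol} a compact space is Gulko compact exactly when it embeds in some $C_p(X(\Filt))$ where $X(\Filt)$ carries a $\K(M)$-ordered compact cover for \emph{some} separable metrizable $M$. If the witnessing $M$ is compact then $\K(M)\te 1$, so $X(\Filt)$ is compact, hence homeomorphic to some $A(\kappa)$, so $X$ is Eberlein; but an Eberlein compactum embeds (Theorem~\ref{ch_Eb}, via Theorem~\ref{as_is_embed}) in a $C_p(X(\Filt)')$ with $X(\Filt)'$ $\sigma$-compact, i.e.\ with an $\N$-ordered --- equivalently, a $\K(\N)$-ordered --- compact cover, and $\N$ is non-compact. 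Thus the witnessing $M$ may always be taken non-compact, which is precisely (1); combined with the previous paragraph this also identifies (1) with ``(2) holds for some non-compact, separable metrizable $M$''.

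I do not expect any real obstacle: the only new ingredient is the one-line fact that $\K(M)$ is not countably directed, which is exactly what licenses the appeal to Theorem~\ref{main1}, and everything else is a direct reuse of the general machinery built in the earlier sections. The only mild care needed is the bookkeeping between the ``some $M$'' quantifier in~(1) and the ``fixed $M$'' of~(2)--(6), and the degenerate compact-$M$ case in the Mercourakis--Sokolov statement, both disposed of in the preceding paragraph.
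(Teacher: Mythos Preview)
Your proposal is correct and matches the paper's approach exactly: the paper simply notes that (0)$\Leftrightarrow$(1) is the Mercourakis--Sokolov theorem, and that $\K(M)$ is countably directed if and only if $M$ is compact, so Theorem~\ref{main1} applies with $P=\K(M)$ to yield the equivalence of (2)--(6). Your write-up supplies the easy details (the countable-dense-set argument for non-countable-directedness, and the degenerate compact-$M$ case) that the paper leaves implicit, but the strategy is the same.
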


The equivalence of (0) and (1) above is due to \cite{Merc, Sokol}. We observe that for a separable metrizable space $M$, the ordered set $\K(M)$ is countably directed if and only if $M$ is compact (in which case $\K(M)\te 1$).

\medskip

The equivalence of condition (6) with the others in the theorems above has a pleasant application. Recall that $P$-additively Noetherian networks are preserved by perfect images (Corollary~\ref{perfect_i}) to deduce:

\begin{theorem}\label{main_app}
Let $X$ be compact. Let $P$ be a directed set which is either not countably directed or $P\te 1$. If $X$ satisfies any of the equivalent conditions of Theorem~\ref{main1}, and $Y$ is the continuous image of $X$ then $Y$ satisfies the same conditions.
\end{theorem}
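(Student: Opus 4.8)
The plan is to reduce everything to condition (6) --- the existence of a $P$-additively Noetherian point network --- which among the equivalent conditions of Theorem~\ref{main1} is the one behaving well under continuous images of compacta. First I would observe that the hypothesis on $P$ (either not countably directed, or $P \te 1$) is exactly what is needed so that Theorem~\ref{main1} applies (in the not-countably-directed case) or so that $P=1$ effectively holds (in the $P\te 1$ case, noting that all the conditions are Tukey-invariant in $P$, and that a $1$-additively Noetherian point network is just an additively Noetherian point network). In either case we may assume $X$ has a $P$-additively Noetherian point network, i.e. condition (6) holds for $X$.

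Next, since $X$ is compact and $Y$ is a continuous image of $X$ under some continuous surjection $f: X \to Y$, and $Y$ is Hausdorff (all spaces are Tychonoff), the map $f$ is automatically closed with compact --- hence certainly perfect --- fibers. Now I would invoke Corollary~\ref{perfect_i}(2): a perfect surjective image of a space with a $P$-additively $<\!\!\kappa$ Noetherian point network again has one, here with $\kappa = \aleph_0$. (This rests on Proposition~\ref{perfect_im}, whose hypotheses are met: $f$ is a closed surjection with $<\!\!\aleph_0$-compact, i.e. compact, fibers; $P$ is $<\!\!\aleph_0$-directed by our standing convention; and by Lemma~\ref{n_stab}(2) the property ``additively Noetherian'' is preserved by images.) Therefore $Y$ has a $P$-additively Noetherian point network, so $Y$ satisfies condition (6) of Theorem~\ref{main1}.

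Finally, I would apply Theorem~\ref{main1} in the reverse direction: since $Y$ is compact and satisfies condition (6), it satisfies all of the equivalent conditions (1)--(6). In the not-countably-directed case this is immediate from Theorem~\ref{main1}; in the $P\te 1$ case one instead appeals to Theorem~\ref{ch_Eb} (with $\N \te 1 \times \N$), which gives the same equivalences for $P=1$. This completes the argument.

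I do not anticipate a serious obstacle here --- the theorem is essentially a packaging of Corollary~\ref{perfect_i}(2) together with the equivalences already proved. The only point requiring a little care is the bookkeeping around the two cases for $P$ (ensuring that ``$P \te 1$'' really does let us use the $P=1$ instance, via Tukey-invariance of all the conditions and of additive Noetherianness), and the trivial but worth-stating observation that a continuous surjection from a compact space onto a Hausdorff space is perfect, which is what licenses the use of Proposition~\ref{perfect_im} and hence Corollary~\ref{perfect_i}(2).
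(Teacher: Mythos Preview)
Your proposal is correct and matches the paper's approach exactly: the paper's proof consists of the single sentence ``Recall that $P$-additively Noetherian [point] networks are preserved by perfect images (Corollary~\ref{perfect_i}),'' and your argument is precisely the unpacking of this --- reduce to condition~(6), observe that a continuous surjection between compact Hausdorff spaces is perfect, apply Corollary~\ref{perfect_i}(2), and then invoke the equivalences (Theorem~\ref{main1} or Theorem~\ref{ch_Eb} as appropriate). Your additional care in handling the $P \te 1$ case via Tukey-invariance and Theorem~\ref{ch_Eb} is exactly right and is implicit in the paper's setup.
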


\begin{cor}
The continuous image of an Eberlein (respectively, Talagrand or Gulko) compact space is again Eberlein (respectively, Talagrand or Gulko).
\end{cor}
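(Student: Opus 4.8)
The plan is to read the corollary off Theorem~\ref{main_app} together with the point-network characterizations of Eberlein, Talagrand and Gulko compacta supplied by Theorems~\ref{ch_Eb}, \ref{ch_T} and \ref{ch_G}. The one preliminary observation needed is that a continuous surjection $f\colon X\to Y$ between compact (Tychonoff, hence Hausdorff) spaces is automatically perfect: it is closed and has compact fibres. So the hypothesis that $Y$ is a continuous image of the compact space $X$ is the same as saying there is a perfect surjection of $X$ onto $Y$, which is exactly the situation to which Corollary~\ref{perfect_i}(2) (equivalently, the relevant instance of Theorem~\ref{main_app}) applies.

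For the Eberlein case I would argue as follows. By Theorem~\ref{ch_Eb}, $X$ being Eberlein compact is equivalent to $X$ carrying an $\N$-additively Noetherian point network (condition~(6) there). Since $\N$ is not countably directed, Theorem~\ref{main_app} --- whose content in this case is precisely Corollary~\ref{perfect_i}(2) with $\kappa=\aleph_0$, using that ``additively Noetherian'' is preserved by images (Lemma~\ref{n_stab}(2)) --- yields that $Y$ again has an $\N$-additively Noetherian point network. The implication $(6)\Rightarrow(0)$ of Theorem~\ref{ch_Eb} then gives that $Y$ is Eberlein compact. The Talagrand case is verbatim the same with $\N$ replaced by $\N^\N$ (also not countably directed) and Theorem~\ref{ch_T} in place of Theorem~\ref{ch_Eb}.

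The Gulko case requires one small bit of bookkeeping. By Theorem~\ref{ch_G}, $X$ is Gulko compact iff there exists a non-compact separable metrizable space $M$ --- and for such $M$ the set $\K(M)$ is not countably directed --- for which $X$ has a $\K(M)$-additively Noetherian point network. Fixing such an $M$, Corollary~\ref{perfect_i}(2) transfers this $\K(M)$-additively Noetherian point network from $X$ to $Y$, \emph{with the same witnessing space $M$}, and Theorem~\ref{ch_G} then declares $Y$ Gulko. I do not expect any genuine obstacle here: the substantive work --- the equivalence of the point-network condition~(6) with actually being Eberlein, Talagrand or Gulko, and the stability of additively Noetherian point networks under perfect maps --- has already been carried out, and the only care needed is to invoke the characterizations in their point-network form (rather than, say, the almost-subbase or base form, which are not visibly image-stable) and, in the Gulko case, to keep the separable metrizable space $M$ fixed throughout the transfer.
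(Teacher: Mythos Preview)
Your proposal is correct and follows essentially the same route as the paper: the corollary is deduced directly from Theorem~\ref{main_app} (itself resting on Corollary~\ref{perfect_i}(2) and Lemma~\ref{n_stab}) together with the point-network characterizations of Theorems~\ref{ch_Eb}, \ref{ch_T} and~\ref{ch_G}. You have simply made explicit a few details the paper leaves implicit --- that continuous surjections between compacta are perfect, and that in the Gulko case the witnessing space $M$ is held fixed during the transfer.
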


We note there is nothing new about this result, but emphasize the uniform nature of the proofs for Eberlein, Talagrand and Gulko compacta, and mention again the simplicity of the proof in contrast to Rudin's argument for Eberlein compacta.

\subsection*{A Corson Counter-Example}

All of the ingredients used in the proof of Theorem~\ref{main1} remain true when we replace `compact' with `$<\!\!\kappa$-compact' and `finite' with `$<\!\!\kappa$'. Except one. In Lemma~\ref{n_stab} (3) we only assert the productivity of `additively Noetherian' and \emph{not} the productivity of `additively $<\!\!\kappa$ Noetherian' for uncountable $\kappa$.

We give here an example of a compact space which shows that no analogue of Theorems~\ref{ch_Eb}, \ref{ch_T} or~\ref{ch_G} holds for Corson compact spaces. Noting that a compact space $X$ is Corson compact if and only if it embeds in some $C_p(L(\kappa))$, and $L(\kappa)$ has a $1$-ordered cover by $<\!\!\aleph_1$-compact sets, we see that the weakest conjecture is that a compact space is Corson if and only if it has a point additively $\aleph_0$-Noetherian base.

\begin{exam}
Let $X$ be the Double Arrow space. Then $X$ is compact, has a (point) additively $\aleph_0$-Noetherian base but not Corson compact.
\end{exam}
\begin{proof}
The Double Arrow space $X$ is hereditarily separable but non-metrizable, so far from Corson compact. It is also hereditarily Lindelof. Let $\mathcal{B}$ be the collection of all open subsets of $X$. It is a base for $X$. The hereditary Lindelof property immediately shows that condition (3) of Lemma~\ref{Noeth} holds for $\mathcal{B}$ with $\kappa=\aleph_1$, so it is additively $\aleph_0$-Noetherian, and {\it a fortiori} $\mathcal{B}$ is point additively $\aleph_0$-Noetherian.
\end{proof}

\subsection*{Why Calibre $(\omega_1,\omega)$ is Critical}

Theorem~\ref{main1} applies to \emph{all} directed sets $P$ which are not countably directed (and Theorem~\ref{ch_Eb} essentially covers all the remaining directed sets). A natural question is to ask for which $P$ is Theorem~\ref{main1} `interesting'? It is striking that this admittedly vague question has a clear answer: Theorem~\ref{main1} is interesting if and only if $P$ has calibre $(\omega_1,\omega)$.

A directed set $P$ has calibre $(\kappa^+, \kappa)$ if for every $\kappa^+$ sized subcollection $A$ of $P$ there is a $\kappa$ sized subset $A_0$ of $A$ which has an upper bound in $P$. We note that $\K(M)$ (and hence $\N^\N \te \K(\N^\N)$) has calibre $(\omega_1, \omega)$.

Informally, the following theorem says that in the case when $P$ does not have calibre $(\omega_1,\omega)$ then far too many spaces satisfy the conditions of Theorem~\ref{main1} for it to be of interest. But --- provided we agree that Corson compacta are `nice', and surely we do --- the  next result also says that when $P$ does have calibre $(\omega_1,\omega)$ then compact spaces satisfying the conditions of Theorem~\ref{main1} are `nice'.

\begin{theorem}\label{cal_crit}
Let $P$ be a directed set.

(1) If $P$ is not calibre $(\omega_1,\omega)$ then \emph{every} space of weight $\le \omega_1$ has a $P$-point finite base (and hence a $P$-point finite almost subbase, $P$-point additively Noetherian base/expandable network, and a $P$-additively Noetherian (strong) point network).

(2) If $P$ has calibre $(\omega_1,\omega)$ and $X$ is a compact space with a $P$-finite additively Noetherian point network (or a $P$-point finite almost subbase et cetera) then $X$ is Corson compact.
\end{theorem}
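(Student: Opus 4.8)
\textbf{Proof plan for Theorem~\ref{cal_crit}.}

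The plan is to treat the two parts by opposite strategies: part~(1) is a direct construction exploiting the failure of calibre $(\omega_1,\omega)$, while part~(2) is a contrapositive/counting argument showing that a non-Corson compact space forces an $\omega_1$-sequence witnessing a violation of the additively Noetherian condition, which a calibre-$(\omega_1,\omega)$ set cannot absorb.

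For part~(1), suppose $P$ fails calibre $(\omega_1,\omega)$, so there is an $\omega_1$-sized subset $\{p_\alpha : \alpha < \omega_1\}$ of $P$ such that no countably infinite (equivalently, no infinite) subcollection has an upper bound; by directedness we may assume the $p_\alpha$ are pairwise incompatible in a strong sense, or at least that any subset with an upper bound is finite. Let $X$ have weight $\le \omega_1$ and fix a base $\{B_\alpha : \alpha < \omega_1\}$. The idea is to set $\mathcal{B}_p = \{B_\alpha : p_\alpha \le p\}$ (together with, say, $X$ itself so each piece is non-empty and the union is all of $\mathcal{B}$). Since for any $p$ the set $\{\alpha : p_\alpha \le p\}$ is finite, each $\mathcal{B}_p$ is finite, hence trivially point-finite; monotonicity in $p$ is clear; and $\bigcup_p \mathcal{B}_p$ is the whole base because each $p_\alpha$ lies below some $p \in P$ by directedness. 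This gives a $P$-point-finite base, and the parenthetical consequences follow from the implications already assembled for Theorem~\ref{main1} (Lemma~\ref{b_en}, Lemma~\ref{en_spn}, and the observation that a finite family is additively Noetherian).

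For part~(2), assume $P$ has calibre $(\omega_1,\omega)$ and $X$ is compact with a $P$-additively Noetherian point network but $X$ is \emph{not} Corson compact. By the Alster--Gul'ko--Michael-type characterization, a compact space is Corson iff it is, say, $\omega_1$-monolithic and has a point-countable $T_0$-separating cover by cozero (equivalently open $F_\sigma$) sets, or more usefully here: $X$ non-Corson yields an uncountable free sequence or an uncountable discrete-in-itself subset, from which one extracts points $x_\alpha$ ($\alpha < \omega_1$) and open sets such that the behavior needed in clause~(4) of Lemma~\ref{Noeth} is forced. Concretely, I would run the point-network machinery: since $X$ has a $P$-additively Noetherian point network, by the theorem in the section ``Covering Properties From Point Networks'' $X$ is (hereditarily) $P$-metacompact in the $<\!\aleph_0$ sense, and chasing the construction one gets, for a suitable open cover of the non-Corson witness, sets $W_\alpha \in \mathcal{W}_{p_\alpha}(y)$ and points $x_\alpha$ with $x_\alpha \in W_\alpha$ but $x_\alpha \notin W_\beta$ for $\beta < \alpha$, over $\alpha < \omega_1$. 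By calibre $(\omega_1,\omega)$ there is an infinite $S \subseteq \omega_1$ and $p \in P$ with $p_\alpha \le p$ for all $\alpha \in S$, so all the corresponding $W_\alpha$ lie in the single family $\mathcal{W}_p(y)$; but the relation $x_\alpha \in W_\alpha$, $x_\alpha \notin W_\beta$ along $S$ (which is order-isomorphic to $\omega$, or we retain an $\omega$-chain) contradicts $\mathcal{W}_p(y)$ being additively $<\!\aleph_0$-Noetherian via Lemma~\ref{Noeth}(4).

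The main obstacle is the bookkeeping in part~(2): one must pick the right ``non-Corson witness'' (a concrete topological feature of a compact non-Corson space — e.g. a copy of $[0,1]$ embedded via an uncountable ``Luzin-type'' family, or the standard fact that a non-Corson compact $X$ maps onto $[0,\omega_1]$ or contains an uncountable subset with no countable network) and feed it into the point-network refinement so that the $\omega_1$-many pairs $(W_\alpha, x_\alpha)$ genuinely satisfy the ``$x_\alpha \notin W_\beta$ for $\beta<\alpha$'' condition rather than merely a point-$\kappa$ failure for uncountable $\kappa$. The delicate point, flagged already in the subsection ``A Corson Counter-Example'', is that additively $<\!\aleph_0$-Noetherian is the right hypothesis (productivity, hence hereditary $P$-metacompactness of $X^2\setminus\Delta$, is available only here), so the argument must be arranged to use the \emph{countable} Noetherian condition and the \emph{countable} subset extracted from calibre $(\omega_1,\omega)$ in tandem; once that alignment is set up correctly, the contradiction is immediate from Lemma~\ref{Noeth}(4).
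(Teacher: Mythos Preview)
Your argument for part~(1) is correct and is essentially the paper's proof unfolded: the paper observes that $P$ fails calibre $(\omega_1,\omega)$ iff $P \tq [\omega_1]^{<\omega}$, and then notes that any base $\{B_\alpha:\alpha<\omega_1\}$ is trivially $[\omega_1]^{<\omega}$-finite via $\mathcal{B}_F=\{B_\alpha:\alpha\in F\}$; your map $p\mapsto\{\alpha:p_\alpha\le p\}$ is exactly the Tukey quotient witnessing this.

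Part~(2), however, has a genuine gap. Your plan is to extract from a compact non-Corson $X$ an $\omega_1$-sequence of points and sets inside a single $\mathcal{W}(y)$ witnessing the failure of the additively Noetherian condition, and you suggest as the ``non-Corson witness'' an uncountable free sequence or an uncountable discrete-in-itself subset of $X$. But such witnesses need not exist: the Double Arrow space (already highlighted in the paper as the Corson counter-example) is compact, non-Corson, hereditarily separable, and has countable tightness, so it contains neither an uncountable discrete subspace nor an uncountable free sequence. There is no internal $\omega_1$-configuration in $X$ alone that you can point to in general; the non-Corson behaviour is only visible \emph{in the square}.

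The paper's proof does exactly what you gesture at in your last paragraph but then back away from. It passes to $Z=X^2$, which by productivity of ``additively Noetherian'' (Lemma~\ref{n_stab}(3)) again has a $P$-additively Noetherian point network. Then, rather than chasing an explicit $\omega_1$-sequence, it proves the short Lemma~\ref{use_calibre}: if $P$ has calibre $(\omega_1,\omega)$ and each $\mathcal{C}_p$ is additively Noetherian, then $\bigcup_p\mathcal{C}_p$ is additively $\aleph_0$-Noetherian. Applied to each $\mathcal{W}(z)$, this gives $Z$ a \emph{$1$-additively $\aleph_0$-Noetherian} point network, hence $Z=X^2$ is hereditarily metaLindel\"of by Corollary~\ref{pn_cp}, and one finishes with the classical fact that a compact $X$ is Corson iff $X^2$ is hereditarily metaLindel\"of. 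Your calibre step (pass to an infinite bounded subset, contradict Noetherian via Lemma~\ref{Noeth}(4)) is precisely the content of Lemma~\ref{use_calibre}; what is missing from your outline is the commitment to work in $X^2$ and the use of the $X^2$-metaLindel\"of characterization of Corson, which is what makes the $\omega_1$-sequence actually materialize.
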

\begin{proof}[Proof (1)]
Take any space $X$ with weight no more than $\omega_1$. Fix a base $\mathcal{B}$ for $X$ where $|\mathcal{B}| \le \omega_1$. List $\mathcal{B}$, with repeats if necessary, as $\mathcal{B}=\{B_\alpha : \alpha < \omega_1\}$. Then writing $\mathcal{B} = \bigcup \{\mathcal{B}_F : F \in [\omega_1]^{<\omega}\}$ where $\mathcal{B}_F = \{ B_\alpha : \alpha \in F\}$ demonstrates that $X$ has a $[\omega_1]^{<\omega}$-(point) finite base.

But recall that a directed set $P$ does not have calibre $(\omega_1,\omega)$ if and only if $P \tq [\omega_1]^{<\omega}$. Hence if $P$ does not have calibre $(\omega_1,\omega)$ then every space of weight $\le \omega_1$ has a $P$-point finite base, as claimed.
\end{proof}

To prove the second claim we will use the following lemma of independent interest.
\begin{lemma}\label{use_calibre} Let $P$ be a directed set with calibre $(\kappa^+, \kappa)$, and let $\mathcal{C}=\{ \mathcal{C}_p : p \in P\}$ be a $P$-ordered collection.

(a) If each $\mathcal{C}_p$ has size $<\!\!\kappa$, then $\bigcup \mathcal{C}$ has size $\le\! \kappa$.

(b) If each $\mathcal{C}_p$ is additively $<\!\!\kappa$ Noetherian, then $\bigcup \mathcal{C}$ is additively $\le\! \kappa$ Noetherian.
\end{lemma}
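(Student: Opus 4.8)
\textbf{Proof proposal for Lemma~\ref{use_calibre}.}

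The plan is to prove (a) and (b) by the same basic strategy: assume the conclusion fails, extract a $\kappa^+$-sized ``witness'' to the failure, use the calibre $(\kappa^+,\kappa)$ hypothesis to find a $\kappa$-sized subfamily of $P$-indices with an upper bound $p^\ast$, and then conclude that $\mathcal{C}_{p^\ast}$ already fails the relevant property, contradicting the hypothesis on each piece. The $P$-orderedness (monotonicity $\mathcal{C}_p \subseteq \mathcal{C}_q$ for $p \le q$) is exactly what lets the upper bound collect the witnessing elements into a single $\mathcal{C}_{p^\ast}$.

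For (a): suppose $\bigcup \mathcal{C}$ has size $\ge \kappa^+$, so we may pick distinct elements $\{C_\alpha : \alpha < \kappa^+\}$ from $\bigcup \mathcal{C}$, and for each $\alpha$ choose $p_\alpha \in P$ with $C_\alpha \in \mathcal{C}_{p_\alpha}$. By calibre $(\kappa^+,\kappa)$ applied to $\{p_\alpha : \alpha < \kappa^+\}$ there is a $\kappa$-sized $S \subseteq \kappa^+$ such that $\{p_\alpha : \alpha \in S\}$ has an upper bound $p^\ast$. Then for each $\alpha \in S$, $C_\alpha \in \mathcal{C}_{p_\alpha} \subseteq \mathcal{C}_{p^\ast}$, so $\mathcal{C}_{p^\ast}$ contains $\kappa$ many distinct sets, contradicting $|\mathcal{C}_{p^\ast}| < \kappa$. (A mild subtlety: if $\{p_\alpha\}$ has size $<\kappa^+$ after removing repeats, then many $\alpha$ share one index $p$, and already $|\mathcal{C}_p| \ge \kappa$ — so that case is immediate; one should dispatch it first.)

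For (b): suppose $\bigcup \mathcal{C}$ is not additively $\le\!\kappa$ Noetherian, i.e.\ not additively $<\!\!\kappa^+$ Noetherian. By Lemma~\ref{Noeth}(4) there are sets $C_\alpha \in \bigcup\mathcal{C}$ and points $x_\alpha$, for $\alpha < \kappa^+$, with $x_\alpha \in C_\alpha \setminus C_\beta$ for all $\beta < \alpha < \kappa^+$. Choose $p_\alpha \in P$ with $C_\alpha \in \mathcal{C}_{p_\alpha}$; by calibre $(\kappa^+,\kappa)$ pick $S \in [\kappa^+]^{\kappa}$ with $\{p_\alpha : \alpha \in S\}$ bounded above by $p^\ast$. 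Relabelling $S$ in increasing order as $\{\alpha_\gamma : \gamma < \kappa\}$, the sets $C_{\alpha_\gamma}$ all lie in $\mathcal{C}_{p^\ast}$ and the points $x_{\alpha_\gamma}$ still satisfy $x_{\alpha_\gamma} \in C_{\alpha_\gamma} \setminus C_{\alpha_\delta}$ for $\delta < \gamma < \kappa$ (since the witnessing condition in (4) is preserved under passing to a subfamily indexed by a subset, in the induced order). By Lemma~\ref{Noeth}(4) applied with $\kappa$ in place of $\kappa^+$, this shows $\mathcal{C}_{p^\ast}$ is not additively $<\!\!\kappa$ Noetherian, a contradiction.

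The main obstacle is really just bookkeeping: making sure that when we pass from the $\kappa^+$-indexed witness to its $\kappa$-sized subfamily with a common upper bound, the structural property being witnessed genuinely descends to the subfamily. For (a) this is trivial (distinctness is inherited), and for (b) it is exactly the observation — already used implicitly in the proof of Lemma~\ref{Noeth} that $\neg$(4) implies $\neg$(3) — that the condition in clause (4) is about pairs $\beta < \alpha$ and so restricts cleanly to any subset of the index set with its inherited well-order. No genuinely new idea is needed beyond this; the content is entirely in the calibre hypothesis doing the work of ``finding a single $p^\ast$ that sees enough of the bad configuration.''
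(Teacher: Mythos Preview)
Your proposal is correct and follows essentially the same approach as the paper's proof: both parts argue by contradiction, extract a $\kappa^+$-indexed witness, apply calibre $(\kappa^+,\kappa)$ to bound a $\kappa$-sized subfamily by some $p^\ast$, and derive a contradiction in $\mathcal{C}_{p^\ast}$ (via Lemma~\ref{Noeth}(4) for part (b)). The only cosmetic difference is in (a): the paper builds the $C_\alpha$ and $p_\alpha$ by transfinite induction so that $C_\alpha \notin \mathcal{C}_{p_\beta}$ for $\beta<\alpha$, which forces the $p_\alpha$ to be distinct up front, whereas you pick arbitrary $p_\alpha$ and dispatch the ``too few distinct $p_\alpha$'' case separately by pigeonhole---both handle the same issue and neither is materially simpler.
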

\begin{proof}
First suppose, for a contradiction, that $\bigcup \mathcal{C}$ has size $>\kappa$ but each $\mathcal{C}_p$ has size $<\!\!\kappa$. Then by transfinite induction we can find, for every $\alpha < \kappa^+$, points $x_\alpha$ and $p_\alpha$ from $P$ such that $C_\alpha \in \mathcal{C}_{p_\alpha}$ but $C_\alpha \notin \mathcal{C}_{p_\beta}$ for any $\beta < \alpha$. By  calibre $(\kappa^+, \kappa)$ applied to $\{p_\alpha : \alpha < \kappa^+\}$, there is a $\kappa$ sized subset $S_0$ of $\kappa^+$ and a $p_0$ in $P$ such that $p_0$ is an upper bound of $\{p_\alpha : \alpha \in S_0\}$. But now we see that for every $\alpha$ in $S_0$, the element $C_\alpha$ is in $C_{p_\alpha}$ which is a subset of $\mathcal{C}_{p_0}$, and so $\mathcal{C}_{p_0}$ must have size at least $\kappa$, contradicting our assumption on the size of the $\mathcal{C}_p$'s.

To prove (b), suppose, for a contradiction, that $\bigcup \mathcal{C}$ is not additively $\le\! \kappa$ Noetherian. By lemma \ref{Noeth} (4), we can find, for all $\alpha<\kappa^+$, sets $C_\alpha$ from $\bigcup \mathcal{C}$ and $x_\alpha$ such that for all $\beta<\alpha<\kappa^+$ the point $x_\alpha$ is in $C_\alpha$ but not in $C_\beta$. For each $\alpha<\kappa^+$, we choose $p_\alpha\in P$ such that $C_{\alpha}\in\mathcal{C}_{p_\alpha}$. Similarly, by  calibre $(\kappa^+, \kappa)$ applied to $\{p_\alpha : \alpha < \kappa^+\}$, there is a $\kappa$ sized subset $S_0$ of $\kappa^+$ and a $p_0$ in $P$ such that $p_0$ is an upper bound of $\{p_\alpha : \alpha \in S_0\}$. But now we see that for every $\alpha$ in $S_0$, the element $x_\alpha$ is in $C_{p_\alpha}$ which is a subset of $C_{p_0}$, and also, for all $\alpha, \beta\in S_0$ with $\beta<\alpha$, the point $x_\alpha$ is in $C_\alpha$ but not in $C_\beta$. Applying lemma \ref{Noeth} (4) again,  $C_{p_0}$ is not additively $<\kappa$ Noetherian, contradicting our assumption on the $\mathcal{C}_p$'s.
\end{proof}

\begin{proof}[Proof (2)] Suppose $P$ has calibre $(\omega_1,\omega)$, and  $X$ is a  compact space with a $P$-additively Noetherian point network. Then $Z=X^2$ also has a $P$-additively Noetherian point network, say $\mathcal{W}=\{\mathcal{W}(z):z\in Z\}$ where $\mathcal{W}(z) = \bigcup \{\mathcal{W}_p(z) : p \in P\}$. Then for each $z$ in $Z$ we see $\mathcal{W}(z)$ is a $P$-ordered collection of additively Noetherian sets, and so --- by Lemma~\ref{use_calibre} --- is $\aleph_0$ additively Noetherian. Hence $Z=X^2$ is hereditarily metaLindelof. And a compact space is Corson compact if and only if its square is hereditarily metaLindelof.
\end{proof}

\section*{Constructing Small Expandable Networks, and Point Networks}

\subsection*{The Problem}
Our key theorem (Theorem~\ref{main1}) states that for a compact space $X$ and directed set $P \tq \N$, the space  $X$ has a $P$-point finite almost subbase if and only if it has a $P$-point  additively Noetherian base (or expandable network), if and only if it has a $P$-additively Noetherian (strong) point network. There is an asymmetry here between the situation with almost subbases compared with bases, expandable networks and (strong) point networks. While finite families are additively Noetherian, the converse is not true. The natural problem is whether every compact space with a $P$-point finite almost subbase has a $P$-point finite base or expandable network, and a $P$-finite (strong) point network.

The problem has a negative solution for the `base' version. Any space with a $\N$-point finite base (or even point countable base) is clearly first countable. But $A(\omega_1)$, has a $\N$-point finite almost subbase but is not first countable.

In this section we give solutions to the remaining parts of the problem which while incomplete cover the most important cases (when $P=\N$, $\N^\N$ or $\K(M)$).

We first show that provided $P$ has calibre $(\kappa^+,\kappa)$ then a space has a $P$-point $<\!\!\kappa$  expandable network if and only if it has a $P$-point $<\!\!\kappa$  strong point network. So only two questions remain.

For general directed sets $P$ it is not clear to the authors that having a $P$-point finite almost subbase implies the existence of a
$P$-point finite expandable network (or even a point network). But the partial orders motivating us, namely $\N$, $\N^\N$ and $\K(M)$, are not only partial orders but also have natural topologies (discrete, product and Vietoris, respectively) which interact nicely with their orders.
So we briefly move away from point networks structured by a partial order and look at point networks structured by a topological space. We then discuss topological directed sets, and their connection with topological point networks. These preliminaries combine in the proof of the main result of this section, Theorem~\ref{main2}, which says that for suitable topological directed sets $P$ (including our motivating examples), every space with a $P$-point finite almost subbase has a $(P\times \N)$-point finite expandable network.

\subsection*{Strong Point Networks give Expandable Networks}

\begin{theorem}\label{exp_is_st_pt_net}  Let $X$ be a space. Let $P$ be a directed set with calibre $(\kappa^+, \kappa)$. Then the following are equivalent:

(1) $X$ has a $P$-point-$<\!\!\kappa$ expandable network, and

(2) $X$ has a $P$-$<\!\!\kappa$ strong point network.
\end{theorem}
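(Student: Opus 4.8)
The plan is to prove the two implications separately; only the passage from the strong point network back to an expandable network will use the calibre hypothesis.

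\emph{From (1) to (2).} I would feed the $P$-point-$<\!\kappa$ expandable network straight into Lemma~\ref{en_spn}, taking the property $\mathcal{Q}$ there to be ``has cardinality $<\!\kappa$''. If $\mathcal{N}=\bigcup_p\mathcal{N}_p$ is point-$<\!\kappa$ at each level, the strong point network that lemma manufactures has $\mathcal{W}_p(x)=\{\{x\}\cup N:(N,V)\in\mathcal{N}_p,\ x\in V\}$, and hence $|\mathcal{W}_p(x)|\le|(\mathcal{N}_p)_x|<\kappa$. So (1) implies (2) with no hypothesis on $P$ whatsoever.

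\emph{From (2) to (1).} Start from a $P$-$<\!\kappa$ strong point network $\{\mathcal{W}_p(x):x\in X\}$ with $\mathcal{W}(x)=\bigcup_p\mathcal{W}_p(x)$. For every point $x$ and every basic open $U\ni x$ fix, by the definition, a basic open $V(x,U)$ with $x\in V(x,U)\subseteq U$, an index $p(x,U)\in P$, and for each $y\in V(x,U)$ a set $W(x,U,y)\in\mathcal{W}_{p(x,U)}(y)$ with $x\in W(x,U,y)\subseteq V(x,U)$. I would assemble an expandable network $\mathcal{N}$ from pairs of the shape $\bigl(W(x,U,y),V(x,U)\bigr)$ --- or, more likely, a carefully restricted subselection of these, controlling which witnessing sets and which open coordinates are admitted --- ordered by putting a pair into $\mathcal{N}_p$ when it arises from data with $p(x,U)\le p$. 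That $\mathcal{N}$ is then an expandable network, $P$-ordered with $\mathcal{N}_p\subseteq\mathcal{N}_{p'}$ whenever $p\le p'$, should be routine: for $x\in U$ the choice $y=x$ already gives a pair $\bigl(W(x,U,x),V(x,U)\bigr)\in\mathcal{N}$ with $x\in W(x,U,x)\subseteq V(x,U)\subseteq U$, which is what the network property asks for.

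The real work --- and the only place calibre $(\kappa^+,\kappa)$ enters --- is verifying that each $\mathcal{N}_p$ is point-$<\!\kappa$. Fix $z\in X$. A pair $(N,V)\in\mathcal{N}_p$ with $z\in V$ has $V=V(x,U)$ for some admitted data with $z\in V(x,U)$ and $p(x,U)\le p$, and applying the strong point network property at $(x,U)$ to the point $z$ itself exhibits a member of $\mathcal{W}_{p(x,U)}(z)\subseteq\mathcal{W}_p(z)$ containing $x$ and contained in $V(x,U)$. Since $|\mathcal{W}_p(z)|<\kappa$, the idea is that if $(\mathcal{N}_p)_z$ were large then a pigeonhole together with the calibre of $P$ --- applied to the $P$-indices carried by a bad family at $z$, producing a large subfamily with a common upper bound $q$ --- would collapse the data into the single collection $\mathcal{W}_q(z)$ of size $<\kappa$, a contradiction; this is the mechanism already seen in Lemma~\ref{use_calibre} and clause (4) of Lemma~\ref{Noeth}. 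I expect the genuinely delicate step to be choosing the admitted pairs and their open coordinates tightly enough that ``$z$ lies in the open coordinate of the pair'' is in fact governed by $\mathcal{W}_q(z)$; getting that bookkeeping right, rather than any conceptual hurdle, is the main obstacle.
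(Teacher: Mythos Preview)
Your direction for (1)$\Rightarrow$(2) matches the paper exactly. For (2)$\Rightarrow$(1), however, there is a genuine gap, and it lies precisely in the step you dismiss as ``bookkeeping''.

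Once you fix $p$ and a point $z$, every pair $(N,V)\in(\mathcal{N}_p)_z$ already has its index $p(x,U)\le p$, so the witness you extract at $z$ lies in $\mathcal{W}_p(z)$ without any appeal to calibre. The obstruction is not collapsing $P$-indices to a common bound---they are bounded by $p$ from the outset---but that the assignment $(N,V)\mapsto W\in\mathcal{W}_p(z)$ has no reason to be injective. Distinct data $(x_1,U_1)$ and $(x_2,U_2)$ with $z\in V(x_1,U_1)\cap V(x_2,U_2)$ may well produce the same witness in $\mathcal{W}_p(z)$ while the associated pairs in $\mathcal{N}_p$ differ. Your pigeonhole therefore yields nothing, and calibre is irrelevant at this spot.

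The paper's ``careful subselection'' is not bookkeeping but a substantive transfinite recursion over $\alpha<\kappa^+$. At stage $\alpha$ one records, for each $x$, the closed set $C_\alpha^x$ generated by previously chosen points whose $V$'s contain $x$, and then selects a \emph{maximal} family $\mathcal{A}_\alpha$ of pairs $(x,V(x,X\setminus C_\alpha^x))$ subject to the antichain rule that any two pairs $(x_1,V_1),(x_2,V_2)\in\mathcal{A}_\alpha$ satisfy $x_1\notin V_2$ or $x_2\notin V_1$. This antichain condition within a stage, together with the separation between stages enforced by the sets $C_\alpha^x$, is exactly what makes the map $(x',V)\mapsto W_{(x',V)}\in\mathcal{W}_p(z)$ injective and hence gives point-$<\!\!\kappa$ with no use of calibre. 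The calibre hypothesis enters in a completely different place: to verify that $\mathcal{N}$ is a network one must show every $x$ eventually lies in $X_\alpha=\{x:x\in C_\alpha^x\}$ for some $\alpha<\kappa^+$; if not, maximality produces for every $\alpha<\kappa^+$ a pair $(x',V)\in\mathcal{A}_\alpha$ with $x\in V$, and since each $(\mathcal{A}_{\le p})_x$ has size $<\!\!\kappa$, Lemma~\ref{use_calibre} (this is where calibre $(\kappa^+,\kappa)$ is used) bounds the total by $\kappa$, a contradiction.
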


\begin{proof} (1) implies (2) is an instance of Lemma~\ref{en_spn} (and does not require the calibre restriction on $P$).

We prove (2) implies (1). By hypothesis $X$ has a $P$-$<\!\!\kappa$ strong point network $\mathcal{W}=\{\mathcal{W}(x) : x \in X\}$, so we can write each $\mathcal{W}(x)=\bigcup_{p \in P} \mathcal{W}_p(x)$, where every $\mathcal{W}_p$ has size $<\!\!\kappa$, if $p \le p'$ then $\mathcal{W}_p(x) \subseteq \mathcal{W}_{p'}(x)$, and if a point $x$ is in an open set $U$, then there is an open set $V=V(x,U)$ containing $x$ and contained in $U$, and a  $p= p(x,U)$ in $P$ such that whenever $y \in V$ then there is a $W$ in $\mathcal{W}_p(y)$ with $x \in W \subseteq V$.

We construct by transfinite induction for all $\alpha \in \kappa^+$,  $p$ in $P$ and points $x$ of $X$:  collections $\mathcal{A}_{\alpha}$ and  $\mathcal{A}_{\alpha, \le p}$ of pairs of points and open neighborhoods, a subset $X_\alpha$ of $X$ and a closed subset $C_{\alpha}^x$ of $X$ as follows. Let $C_{\alpha}^x=\overline{ \{y : \exists (y,V) \in \bigcup_{\beta < \alpha} \mathcal{A}_{\beta}, x \in V\}}$, $X_\alpha = \{ x : x \in C_{\alpha}^x\}$, $\mathcal{A}_{\alpha}$  is a maximal subcollection of $\{(x,V(x,X \setminus C_{\alpha}^x)) : x \notin X_{\alpha}\}$ such that if $(x_1,V_1)$ and $(x_2,V_2)$ are in the collection then either $x_1 \notin V_2$ or $x_2 \notin V_1$, and $\mathcal{A}_{\alpha, \le p} = \{(x,V) \in \mathcal{A}_{\alpha} : p(x,X \setminus C_{\alpha}^x) \le p\}$.
Let $\mathcal{A}_{\le p} = \bigcup_{\alpha} \mathcal{A}_{\alpha, \le p}$ and $\mathcal{A}=\bigcup_p \mathcal{A}_{\le p}$.

Now set $\mathcal{N}_p = \{ (V \cap W, V) : \exists (x',V) \in \mathcal{A}_{\le p} \text{ and } W \in \mathcal{W}_p (x')\}$ and let $\mathcal{N} = \bigcup_p \mathcal{N}_p$. Clearly, if $p \le q$ then $\mathcal{N}_p \subseteq \mathcal{N}_q$. We will show that $\mathcal{N}$ is an expandable network, and each $\mathcal{N}_p$ is point-$<\!\!\kappa$.

Fix $p$. Observe that $\mathcal{N}_p$ is point-$<\!\!\kappa$ provided $\mathcal{A}_{\le p}$ is point-$<\!\!\kappa$, in the sense that for any point $y$ the set $(\mathcal{A}_{\le p})_y:=\{ (x',V) \in \mathcal{A}_{\le p} : y \in V\}$ has size $<\!\!\kappa$. To establish $\mathcal{A}_{\le p}$  point-$<\!\!\kappa$ we define an injection from $(\mathcal{A}_{\le p})_y$ into $\mathcal{W}_p(y)$, which we know has size $<\!\!\kappa$. Take any $(x',V)$ in $(\mathcal{A}_{\le p})_y$. So $y \in V$, and by definition of a $P$-ordered strong point network there is a $W_{(x',V)}$ in $\mathcal{W}_p(y)$ such that $x' \in W_{(x',V)} \subseteq V$.

Let us show that $(x',V) \mapsto W_{(x',V)}$ is the desired injection.
Well suppose $(x_1,V_1)$ and $(x_2,V_2)$ are in $(\mathcal{A}_{\le p})_y$. Let $W_i=W_{(x_i,V_i)}$ for $i=1,2$. Two cases arise. Suppose first both pairs are in some $\mathcal{A}_{\alpha, \le p}$. Without loss of generality we can suppose $x_1 \notin V_2$.  Since $x_1 \notin V_2$, we see $W_1 \not\subseteq W_2$, and in particular $W_1 \ne W_2$. Now suppose $(x_1,V_1) \in \mathcal{A}_{\alpha, \le p}$ and $(x_2,V_2) \in  \mathcal{A}_{\beta, \le p}$ where $\alpha < \beta$.  If $W_1=W_2$ then $x_2 \in V_1$ and $x_1 \in V_2$. But this leads to a contradiction because $x_2 \in V_1 = V(x_1, X \setminus C_{\alpha}^{x_1})$ and $C_{\beta}^{x_2} \supseteq \{ z : \exists (z,V) \in \mathcal{A}_{\alpha}$ and $x_2 \in V\} \ni x_1$, so $x_1 \notin V(x_2, X \setminus C_{\beta}^{x_2})=V_2$. Thus $W_1 \ne W_2$ in the second case as well as the first, and our map is indeed injective.

It remains to show that $\mathcal{N}$ is an expandable network. To this end take any point $x$ in an open set $U$. We will show that $x$ is in some $X_\alpha$. If so then, by definition of $X_\alpha$ and $C_{\alpha}^x$, the neighborhood $V(x,U)$ of $x$ must meet $\{x' : \exists (x',V) \in \bigcup_{\beta < \alpha} \mathcal{A}_{\beta}, x \in V\}$, and there is a point $x'$ in $V(x,U)$, element  $p_0$, and $(x',V)$ in $\mathcal{A}_{\alpha, \le p_0}$ with $x \in V$. Let $p$ be an upper bound of $p_0$ and  $p(x,U)$. Then $(x',V)$ is in $\mathcal{A}_{\le p}$ and there is a $W$ in $\mathcal{W}_p (x')$ such that $x \in W \subseteq U$. Now we have that $x \in W \cap V \subseteq U$ and $(W \cap V,V)$ is in $\mathcal{N}_p$, as desired.

Suppose, for a contradiction, that for all  $\alpha < \kappa^+$ the point $x$ is not in $X_\alpha$. Then for all $\alpha$ the point $x$ is not in $C_{\alpha}^x$. As $\mathcal{A}_{\alpha}$ is a maximal subcollection of $\{ (x',V(x',X \setminus C_{\alpha}^{x'}) : x' \notin C_{\alpha}^{x'}\}$, there must be a pair $(x',V)$ in $\mathcal{A}_{\alpha}$ such that $x \in V$. However $\mathcal{A}=\bigcup_p \mathcal{A}_{\le p}$, and for each $p$ in $P$ the set $(\mathcal{A}_{\le p})_x$ has size $<\!\!\kappa$, so recalling that $P$ has calibre $(\kappa^+,\kappa)$, applying Lemma~\ref{use_calibre} we see that there can only be $\le\!\kappa$ many $\mathcal{A}_\alpha$ containing a pair $(x',V)$ such that $x \in V$, contradiction.
\end{proof}

\begin{cor} \
\begin{description}

\item[($P=\N, \kappa=\aleph_0$)] A space has a $\sigma$-point finite expandable network if and only if it has a $\sigma$-finite strong point network.

\item[($P=\K(M), \kappa=\aleph_0$)] A space has a $\K(M)$-point finite expandable network if and only if it has a $\K(M)$-finite strong point network.

\item[($P=\N^\N, \kappa=\aleph_0$)] A space has a $\N^\N$-point finite expandable network if and only if it has a $\N^\N$-finite strong point network.
\item[($P=1, \kappa=\aleph_1$)] A space has a point countable expandable network if and only if it has a countable	 strong point network.
\end{description}
\end{cor}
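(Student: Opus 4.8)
The plan is to read off each of the four equivalences as a special case of Theorem~\ref{exp_is_st_pt_net}, so that the only things requiring attention are the verification of the calibre hypothesis for the relevant directed set $P$ and a translation of terminology; there is no genuinely new argument to make.

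First I would set up the dictionary. Taking the property $\mathcal{Q}$ to be ``$<\!\!\kappa$'' (size strictly less than $\kappa$), clause~(2) of Theorem~\ref{exp_is_st_pt_net} says ``$X$ has a $P$-$<\!\!\kappa$ strong point network'', which for $\kappa=\aleph_0$ is a $P$-finite strong point network and for $\kappa=\aleph_1$ is a $P$-countable strong point network; clause~(1) similarly becomes a $P$-point finite, respectively $P$-point countable, expandable network. When $P$ is the trivial one-element poset $1$, the prefix ``$1$-'' is vacuous: a $1$-$\mathcal{Q}$ (strong) point network is just a $\mathcal{Q}$ (strong) point network and a $1$-point-$\mathcal{Q}$ expandable network is just a point-$\mathcal{Q}$ expandable network, as noted after the definition of $P$-$\mathcal{Q}$ point networks. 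Finally, $\sigma$-point finite means $\N$-point finite and $\sigma$-finite means $\N$-finite, by the convention fixed earlier. Under these identifications the four bullets are exactly Theorem~\ref{exp_is_st_pt_net} for the pairs $(P,\kappa)=(\N,\aleph_0)$, $(\K(M),\aleph_0)$, $(\N^\N,\aleph_0)$ and $(1,\aleph_1)$ respectively.

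Second I would check that each of these $P$'s has calibre $(\kappa^+,\kappa)$ for the stated $\kappa$: for $P=\N$ and $\kappa=\aleph_0$ this is vacuous, since $\N$ is countable and so has no subset of size $\kappa^+=\aleph_1$; for $P=\K(M)$ and $P=\N^\N\te\K(\N^\N)$ with $\kappa=\aleph_0$ the calibre $(\omega_1,\omega)$ is already recorded in the paper; and for $P=1$ and $\kappa=\aleph_1$ the trivial poset has calibre $(\lambda^+,\lambda)$ for every $\lambda$, since its unique element bounds every subset, so in particular it has calibre $(\aleph_2,\aleph_1)$. Invoking Theorem~\ref{exp_is_st_pt_net} four times then gives the corollary. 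There is no real obstacle here; the only place where one must be slightly careful is the $P=1$, $\kappa=\aleph_1$ bullet, where one checks that ``countable strong point network'' is the property ``$<\!\!\aleph_1$'' (rather than ``$\le\!\aleph_1$'') and that ``point countable expandable network'' likewise means ``point-$<\!\!\aleph_1$'', so that the translation of clauses~(1) and~(2) matches the stated wording exactly. (I note also that only the implication (2)$\Rightarrow$(1) of Theorem~\ref{exp_is_st_pt_net} uses the calibre hypothesis, so the reverse implication in each bullet is unconditional.)
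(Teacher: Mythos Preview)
Your proposal is correct and matches the paper's approach: the corollary is stated without proof immediately after Theorem~\ref{exp_is_st_pt_net}, so it is intended to be read off exactly as you describe, by checking the calibre $(\kappa^+,\kappa)$ hypothesis in each of the four cases. Your verifications of calibre and your terminological translations are all accurate.
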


\subsection*{Topological Point Networks}

\begin{defn}
Let $X$ be a space. Given a space $Z$ and  property $\mathcal{Q}$, we say $\mathcal{W}=\{\mathcal{W}(x) : x \in X\}$ is a $Z$-$\mathcal{Q}$ (strong) point network (in the topological sense) if for each $x$ in $X$ we can write $\mathcal{W}(x)=\bigcup_{z \in Z} \mathcal{W}_z(x)$,  and:

(1) if $x$ is in open $U$, then there is an open $V=V(x,U)$ containing $x$ and contained in $U$, and a $z=z(x,U)$ in $Z$ such that if $y \in V$ then there is a $W$ in $\mathcal{W}_z(y)$ such that $x \in W \subseteq U$ (respectively, $W \subseteq V$ for the strong version),

(2) for every $x$ and $z$ there is an open $T$ around $z$ such that $\mathcal{W}_T (x)= \bigcup \{ \mathcal{W}_{z'} (x) : z' \in T\}$ has property $\mathcal{Q}$.
\end{defn}

\begin{lemma}\label{top_2_ord}
Let a space $X$ have a $Z$-finite (strong) point network where $Z$ is a space with $P$-ordered compact cover. Then $X$ has a $P$-finite (strong) point network.
\end{lemma}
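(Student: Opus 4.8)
The plan is to take a $Z$-finite (strong) point network on $X$ and convert the topological indexing by $Z$ into an order indexing by $P$, using the hypothesis that $Z$ has a $P$-ordered compact cover $\{K_p : p \in P\}$ (with $K_p \subseteq K_{p'}$ whenever $p \le p'$). The naive idea is to set $\mathcal{W}_p(x) = \bigcup \{ \mathcal{W}_z(x) : z \in K_p\}$; the monotonicity in $p$ is then automatic from $K_p \subseteq K_{p'}$. The two things to check are (i) that each $\mathcal{W}_p(x)$ is finite, and (ii) that the defining covering property of a $P$-(strong) point network holds.

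For finiteness, fix $x$ and $p$. By clause (2) of the definition of a $Z$-finite point network, every $z \in Z$ has an open neighbourhood $T_z$ such that $\mathcal{W}_{T_z}(x) = \bigcup\{\mathcal{W}_{z'}(x) : z' \in T_z\}$ is finite. The sets $\{T_z : z \in K_p\}$ form an open cover of the compact set $K_p$, so finitely many of them, say $T_{z_1}, \dots, T_{z_m}$, cover $K_p$. Then $\mathcal{W}_p(x) = \bigcup\{\mathcal{W}_z(x) : z \in K_p\} \subseteq \bigcup_{i=1}^m \mathcal{W}_{T_{z_i}}(x)$ is a finite union of finite sets, hence finite. (This is exactly the point where compactness of the $K_p$'s, rather than just their being a cover, is used.)

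For the covering property, suppose $x$ is in an open set $U$. Apply clause (1) of the $Z$-network definition to get an open $V = V(x,U)$ with $x \in V \subseteq U$ and a point $z = z(x,U) \in Z$ such that for every $y \in V$ there is a $W \in \mathcal{W}_z(y)$ with $x \in W \subseteq U$ (respectively $\subseteq V$). Since $\{K_p\}$ covers $Z$, choose $p = p(x,U) \in P$ with $z \in K_p$. Then for every $y \in V$ the witnessing $W$ lies in $\mathcal{W}_z(y) \subseteq \mathcal{W}_p(y)$, so the same $V$ and this $p$ witness the $P$-(strong) point network condition. Monotonicity ($p \le p'$ implies $\mathcal{W}_p(x) \subseteq \mathcal{W}_{p'}(x)$) follows since $K_p \subseteq K_{p'}$ gives $\mathcal{W}_p(x) \subseteq \mathcal{W}_{p'}(x)$ directly from the definition.

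There is essentially no obstacle here; the only subtlety is the order of quantifiers in clause (2) — the neighbourhood $T_z$ may depend on $x$ — but this causes no trouble because finiteness of $\mathcal{W}_p(x)$ is proved separately for each fixed $x$, so we are free to use the cover of $K_p$ by the $x$-dependent neighbourhoods $T_z$. The same argument handles the strong and non-strong versions simultaneously, with the bracketed alternative $W \subseteq V$ carried through verbatim.
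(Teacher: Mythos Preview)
Your proposal is correct and follows essentially the same approach as the paper: define $\hat{\mathcal{W}}_p(x)=\mathcal{W}_{K_p}(x)$, use compactness of $K_p$ together with the local finiteness neighbourhoods $T_z$ from clause~(2) to get each $\hat{\mathcal{W}}_p(x)$ finite, and then observe that the point-network condition and monotonicity are immediate. Your write-up is in fact more explicit than the paper's, which leaves the covering property and monotonicity as ``easy to check''; your remark about the $x$-dependence of $T_z$ is also apt.
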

\begin{proof}
Let $\mathcal{W}=\{\mathcal{W}(x) : x \in X\}$ where $\mathcal{W}(x)=\bigcup_{z \in Z} \mathcal{W}_z(x)$ be as in the definition of a
 $Z$-finite (strong) point network, and let $Z=\bigcup_{p \in P} K_p$ be a $P$-ordered compact cover of $Z$.

Fix $x$ in $X$. For each $z$ in $Z$ fix the open set $T_z$ given by condition (1). Fix  $p$ in $P$. As $K_p$ is compact some finite subcollection of  $\{T_z : z \in Z\}$ covers $K_p$. By condition (2) we see that $\mathcal{W}_{K_p}(x)$ is finite (as a subset of a finite union of finite sets).

Let $\hat{\mathcal{W}}=\{ \hat{\mathcal{W}} (x) : x \in X\}$ where  $\hat{\mathcal{W}} (x)=\bigcup_p \hat{\mathcal{W}}_p (x)$ and $\hat{\mathcal{W}}_p(x) = \mathcal{W}_{K_p}(x)$. We have just shown that each $\hat{\mathcal{W}}_p (x)$ is finite, and then it is easy to check $\hat{\mathcal{W}}$ is the desired  $P$-finite (strong) point network.
\end{proof}

\subsection*{Topological Directed Sets}
Topological point networks are relevant here because the directed sets we are most interested in carry natural topologies which interact with the order. We call a directed set with a topology a \emph{topological directed set}. Any directed set becomes topological with the discrete topology. More usefully, $\N^\N$ with the product topology and any $\K(M)$ with the Vietoris topology (where $M$ is separable metrizable) are separable metrizable topological directed sets in which all down sets are compact and every convergent sequence has an upper bound.

In the next two lemmas we see how the topology of a topological directed set can influence the order, and vice versa.
\begin{lemma}\label{get_cal}
Let $P$ be a topological directed sets which is Frechet-Urysohn, has countable extent, and every convergent sequence has an upper bound. Then $P$ has calibre $(\omega_1,\omega)$.
\end{lemma}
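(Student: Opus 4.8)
The plan is to argue by contradiction: suppose $P$ is Frechet--Urysohn, has countable extent, every convergent sequence has an upper bound, but $P$ fails calibre $(\omega_1,\omega)$. Then there is an $\omega_1$-sized subset $A=\{p_\alpha : \alpha < \omega_1\}$ of $P$ no countably infinite subset of which has an upper bound in $P$. In particular (discarding finitely many points and relabelling) we may assume $A$ itself has no upper bound; more importantly, no convergent sequence drawn from $A$ (with infinitely many distinct terms) can have an upper bound. Combined with the hypothesis ``every convergent sequence has an upper bound'', this forces every convergent sequence with terms in $A$ to be eventually constant, i.e.\ $A$ has \emph{no} nontrivial convergent sequences lying inside it. The strategy then is to leverage the Frechet--Urysohn property to show this makes $A$ a closed discrete subspace of $P$, which contradicts countable extent.

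The first step is to show $A$ is discrete in the subspace topology. Take $p_\alpha \in A$; if $p_\alpha$ were not isolated in $A$, then since $P$ is Frechet--Urysohn there would be a sequence in $A \setminus \{p_\alpha\}$ converging to $p_\alpha$. We can extract a subsequence with infinitely many distinct terms (if the original sequence took only finitely many values, a constant subsequence equal to some $p_\beta \ne p_\alpha$ would converge to $p_\alpha$, which in a $T_1$ or Hausdorff space is impossible; the paper works with Tychonoff spaces so this is fine). This gives a convergent sequence from $A$ with infinitely many distinct terms, hence with an upper bound in $P$ by hypothesis --- but then those infinitely many distinct $p_\beta$'s, a countable subset of $A$, have an upper bound, contradicting the choice of $A$. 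So every point of $A$ is isolated in $A$, i.e.\ $A$ is discrete.

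The second step upgrades ``discrete'' to ``closed discrete''. Suppose $A$ is not closed, so there is $q \in \overline{A} \setminus A$. By Frechet--Urysohn there is a sequence in $A$ converging to $q$; again extract a subsequence with infinitely many distinct terms (a constant subsequence would have its value equal to $q \notin A$, impossible). This convergent sequence from $A$ has an upper bound in $P$, once more contradicting the defining property of $A$. Hence $A$ is closed and discrete, an uncountable closed discrete subspace of $P$, contradicting countable extent. This contradiction establishes that $P$ has calibre $(\omega_1,\omega)$.

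The main obstacle --- really the only delicate point --- is making sure that ``no countable subset of $A$ has an upper bound'' genuinely rules out \emph{every} relevant convergent sequence, and that one can always pass from an arbitrary convergent sequence in $A$ to one with infinitely many distinct terms whenever the limit behaves badly. The first is handled by noting that the failure of calibre $(\omega_1,\omega)$ can be arranged so that \emph{no} infinite subset of the witnessing $\omega_1$-set has an upper bound (if some infinite subset did, replace $A$ by a smaller uncountable set avoiding that behaviour by a routine pruning argument, or simply observe that an upper bound for an infinite subset is an upper bound for a countably infinite subset). The second is the standard observation that in a Hausdorff space a nontrivial sequence converging to a point outside the closure of its own distinct-value set, or to a point not equal to any repeated value, must contain infinitely many distinct terms. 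Once these two bookkeeping points are in place, the Frechet--Urysohn hypothesis does all the work and countable extent delivers the contradiction.
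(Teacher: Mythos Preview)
Your proof is correct and uses essentially the same idea as the paper, just framed contrapositively and with more bookkeeping. The paper argues directly in four lines: given uncountable $S\subseteq P$, countable extent forces $S$ not to be closed discrete, so some $p$ lies in $\overline{S\setminus\{p\}}$; Frechet--Urysohn yields a sequence in $S\setminus\{p\}$ converging to $p$, which (being non-eventually-constant in a Hausdorff space) has infinitely many distinct terms and, by hypothesis, an upper bound. Your separate treatment of ``discrete'' and ``closed'', and the discussion of pruning $A$, are unnecessary --- the failure of calibre $(\omega_1,\omega)$ already gives an uncountable $A$ with no countably infinite bounded subset, and that is all you need.
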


\begin{proof}
Take any uncountable subset $S$ of $P$. Since $P$ is has countable extent, $S$ is not a closed discrete subset, so there is a $p$ in $P$ which is in the closure of $S \setminus \{p\}$. As $P$ is Frechet-Urysohn, there is a sequence on $S\setminus \{p\}$ converging to $p$. By hypothesis on $P$, this {\em infinite} sequence has an upper bound. Thus some infinite subset of $S$ has an upper bound, as required for   calibre $(\omega_1, \omega)$ to hold.
\end{proof}

\begin{lemma}\label{base_finite} Let $P$ be a topological directed set which is first countable, and every convergent sequence has an upper bound. Let $\mathcal{C}$ be a $P$-finite collection of subsets of a space $X$. Then for any $p\in P$ there exists an open neighborhood $T$ of $p$ such that $\bigcup \{\mathcal{C}_q: q\in T\}$ is finite.
\end{lemma}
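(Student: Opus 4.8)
The plan is to argue by contradiction, using first countability to produce a suitable sequence and then the upper-bound hypothesis on $P$ to force an infinite member $\mathcal{C}_r$ of the ordering. Recall that `$\mathcal{C}$ is $P$-finite' means we may write $\mathcal{C}=\bigcup\{\mathcal{C}_q:q\in P\}$ with $\mathcal{C}_q\subseteq\mathcal{C}_{q'}$ whenever $q\le q'$ and with every $\mathcal{C}_q$ finite. Fix $p\in P$ and suppose the conclusion fails at $p$, so that for \emph{every} open neighborhood $T$ of $p$ the collection $\bigcup\{\mathcal{C}_q:q\in T\}$ is infinite.

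First I would use first countability to fix a decreasing countable neighborhood base $T_1\supseteq T_2\supseteq\cdots$ at $p$. Then I would construct recursively points $q_n\in P$ and sets $C_n\in\mathcal{C}$ as follows: given $q_1,\dots,q_{n-1}$ and $C_1,\dots,C_{n-1}$, the collection $\bigcup\{\mathcal{C}_q:q\in T_n\}$ is infinite, hence is not contained in the finite set $\{C_1,\dots,C_{n-1}\}$, so I can pick $q_n\in T_n$ and $C_n\in\mathcal{C}_{q_n}\setminus\{C_1,\dots,C_{n-1}\}$. By construction the $C_n$ are pairwise distinct. Since $q_m\in T_m\subseteq T_n$ for all $m\ge n$ and $\{T_n:n\in\N\}$ is a neighborhood base at $p$, the sequence $(q_n)$ converges to $p$. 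By the standing hypothesis on $P$, this convergent sequence has an upper bound $r\in P$; then $C_n\in\mathcal{C}_{q_n}\subseteq\mathcal{C}_r$ for every $n$, so $\mathcal{C}_r$ contains the infinitely many distinct sets $C_1,C_2,\dots$, contradicting the finiteness of $\mathcal{C}_r$. This contradiction proves the lemma, and the desired $T$ can be taken to be any one of the $T_n$ (indeed any neighborhood of $p$ would have had to fail, so the argument applies).

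I do not anticipate a serious obstacle. The only step requiring a moment's care is the recursive selection, where one must note that the infiniteness of $\bigcup\{\mathcal{C}_q:q\in T_n\}$ is exactly what guarantees a \emph{single} witness $q_n\in T_n$ contributing a previously unseen member $C_n$. The two hypotheses on the topological directed set $P$ are each used precisely once: first countability to obtain the countable decreasing base $\{T_n\}$ at $p$, and the property that convergent sequences have upper bounds to produce the element $r$ with $\mathcal{C}_r\supseteq\bigcup_n\mathcal{C}_{q_n}$.
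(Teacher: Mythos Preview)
Your proof is correct and follows essentially the same argument as the paper: both proceed by contradiction, use first countability to select a sequence $q_n\to p$ with $C_n\in\mathcal{C}_{q_n}$ distinct, and then invoke the upper-bound hypothesis to trap all the $C_n$ in a single finite $\mathcal{C}_r$. The only cosmetic difference is that you take the neighborhood base to be decreasing, which is harmless.
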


\begin{proof} Fix $p$ in $P$. Since $P$ is first countable we can fix $\{B_n:n \in \N\}$  a local base at $p$.
Suppose, for contradiction, that  for any open neighborhood $T$ of $p$ the set $\bigcup \{\mathcal{C}_q: q\in T\}$ is infinite.  Then for each $n$, we can find $p_n\in B_n$ and  $C_{n}\in \mathcal{C}_{p_n}\setminus \{C_i: i<n\}$. Note that the sequence $\{p_n: n \in \N\}$  converges to $p$. By hypothesis, there is $p_0\in P$ such that $p_n\leq p_0$ for all $n$. Since $\mathcal{C}$ is $P$-ordered, $\{C_n: n \in \N\}$ is an infinite subcollection of $\mathcal{C}_{p_0}$ which is a contradiction.
\end{proof}

Now we connect topological directed sets and the topological point networks of the previous section.
\begin{lemma}\label{top_to_order}
Let $P$ be a topological directed set in which down sets are compact. Then if a space $X$ has a $P$-finite (strong) point network in the topological sense then it also has a $P$-finite (strong) point network in the order sense.
\end{lemma}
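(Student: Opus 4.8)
The plan is to recognize this as a direct application of Lemma~\ref{top_2_ord}. The key observation is that a topological directed set $P$ in which every down set $\down p = \{q \in P : q \le p\}$ is compact automatically supplies a $P$-ordered compact cover of the \emph{topological space} $P$, namely $\{\down p : p \in P\}$ itself: this family covers $P$ since $p \in \down p$ for every $p$, it is $P$-ordered since $p \le p'$ forces $\down p \subseteq \down{p'}$, and each $\down p$ is compact by hypothesis.

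So suppose $X$ has a $P$-finite (strong) point network in the topological sense; this is precisely a $Z$-finite (strong) point network with $Z=P$ carrying its topology. First I would invoke the observation above to see that $Z=P$ is a space with a $P$-ordered compact cover, and then Lemma~\ref{top_2_ord} applies verbatim to yield a $P$-finite (strong) point network for $X$ in the order sense. Concretely, if $\mathcal{W}(x)=\bigcup_{z\in P}\mathcal{W}_z(x)$ is the given topological point network, one sets $\hat{\mathcal{W}}_p(x)=\mathcal{W}_{\down p}(x)=\bigcup\{\mathcal{W}_q(x): q\le p\}$. Monotonicity in $p$ is immediate from $\down p\subseteq\down{p'}$ when $p\le p'$; finiteness of each $\hat{\mathcal{W}}_p(x)$ comes from compactness of $\down p$ together with condition~(2) of the definition of a topological point network (finitely many of the basic neighborhoods $T_z$ cover $\down p$, so $\hat{\mathcal{W}}_p(x)$ sits inside a finite union of finite sets); and the network/expansion condition is inherited because, given $x\in U$, one reuses the same open set $V(x,U)$ and index $z(x,U)\in P$ furnished by the topological point network, noting that $z(x,U)\in\down{z(x,U)}$ so that $\mathcal{W}_{z(x,U)}(y)\subseteq\hat{\mathcal{W}}_{z(x,U)}(y)$.

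There is no genuine obstacle here: the substance is packaged in Lemma~\ref{top_2_ord} and in the definition of a topological directed set. The only point to check carefully — and it is immediate — is that ``down sets compact'' is exactly the hypothesis needed to turn $P$ into its own $P$-ordered compact cover; beyond that the argument is purely formal, and one could equally well just unfold the short proof of Lemma~\ref{top_2_ord} in place.
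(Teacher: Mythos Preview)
Your proposal is correct and matches the paper's own proof essentially verbatim: the paper observes that $P=\bigcup_{p\in P}\down{p}$ is a $P$-ordered compact cover of the space $P$ and then invokes Lemma~\ref{top_2_ord}. Your additional unfolding of the argument of Lemma~\ref{top_2_ord} is accurate but not needed.
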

\begin{proof}
Since the down sets are compact and $P = \bigcup_{p \in P} (\down{p})$, the space $P$ has a $P$-ordered compact cover, so we can apply Lemma~\ref{top_2_ord}.
\end{proof}

\subsection*{Some Almost Subbases give Small Expandable Networks}

\begin{theorem}\label{main2}
Let $P$ be a topological directed set which is separable metrizable, down sets are compact and every convergent sequence has an upper bound.

If a space $X$ has a $P$-point finite almost subbase then it has a $(P \times \N)$-point finite expandable network.
\end{theorem}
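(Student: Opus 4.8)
The plan is to run the chain: a $P$-point finite almost subbase for $X$ yields a $(P\times\N)$-finite \emph{strong} point network for $X$ in the \emph{topological} sense (with respect to the space $P\times\N$), which by Lemma~\ref{top_to_order} yields one in the order sense, which by Theorem~\ref{exp_is_st_pt_net} yields a $(P\times\N)$-point finite expandable network. Here is the set-up. Put $P'=P\times\N$, topologised by the product ($\N$ discrete). Then $P'$ is again a separable metrizable topological directed set in which down-sets are compact ($\down(p,k)$ is a finite union of copies of $\down p$) and in which every convergent sequence has an upper bound (its $\N$-coordinate is eventually constant). Being separable metrizable, $P'$ is Frechet--Urysohn of countable extent, so by Lemma~\ref{get_cal} it has calibre $(\omega_1,\omega)$; hence by Theorem~\ref{exp_is_st_pt_net} (case $\kappa=\aleph_0$) a $P'$-finite strong point network for $X$ in the order sense suffices, and since down-sets of $P'$ are compact, Lemma~\ref{top_to_order} reduces this to producing one in the topological sense. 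The ``$\times\N$'' in the statement is precisely the cost of this reduction. By Lemma~\ref{f_sep} (and $P'\times\N\te P'$) we may assume the given almost subbase is an $F$-separating $P'$-point finite almost subbase $\alpha=\bigcup_{p'\in P'}\alpha_{p'}$ with its $U$-representations; recall each member of $\alpha$ is a cozero set.

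The topology of $P$ enters through the following strengthening of $P$-point finiteness, proved by the argument of Lemma~\ref{base_finite}: for every $x\in X$ and $p'\in P'$ there is an open $T=T(x,p')\ni p'$ with $\{V\in\bigcup_{q'\in T}\alpha_{q'}:x\in V\}$ finite --- otherwise first countability of $P'$ produces $q'_n\to p'$ and distinct $V_n\in\alpha_{q'_n}$ through $x$, and any upper bound of the sequence contradicts point-finiteness of its $\alpha$-level. This turns the global hypothesis ``$\alpha$ is $P$-point finite'' into the local-in-$P'$ finiteness needed to check clause~(2) of a topological point network.

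Next the construction. For $x\in X$ and $(p,m)\in P'$ let $\mathcal{W}_{(p,m)}(x)$ consist of all sets
\[
\Big(\bigcap_{V\in\mathcal{F}}V\Big)\ \cap\ \bigcap_{(W,n)\in\mathcal{G}}\big(X\setminus U_{2n-1}(W)\big)\ \cap\ \big(X\setminus E_{p,m}(x)\big),
\]
where $\mathcal{F}$ is a finite subset of $\{V\in\alpha_p:x\in V\}$, $\mathcal{G}$ a finite subset of $\{(W,n):W\in\alpha_p,\ x\in W,\ n\le m,\ x\notin U_{2n-1}(W)\}$, and $E_{p,m}(x)=\bigcup\{U_{2m-1}(W):W\in\alpha_p,\ x\notin W\}$ is a single ``coarse'' set collecting all exterior complements at this level; put $\mathcal{W}(x)=\bigcup_{(p,m)}\mathcal{W}_{(p,m)}(x)$. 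Point-finiteness of $\alpha_p$ and the cap $n\le m$ make each $\mathcal{W}_{(p,m)}(x)$ finite. For the point-network clause: given $x$ in open $U$, expand to a basic neighbourhood $B=\bigcap_i V_i\cap\bigcap_j\big(X\setminus U_{2m_j-1}(W_j)\big)\subseteq U$ with all $V_i,W_j\in\alpha_{p_0}$ and all indices $\le m$, set $V(x,U)=B\cap\bigcap\{V\in\alpha_{p_0}:x\in U_{2m-1}(V)\}$ (an open neighbourhood of $x$ inside $U$) and $p(x,U)=(p_0,m)$; one checks that for any $y\in V(x,U)$ the member of $\mathcal{W}_{(p_0,m)}(y)$ obtained with $\mathcal{F}=\{V\in\alpha_{p_0}:x,y\in V\}$, with $\mathcal{G}$ the pairs $(W_j,m_j)$ for which $y\in W_j$, and with coarse set $X\setminus E_{p_0,m}(y)$, is caught between $x$ and $V(x,U)$ (the $W_j$ with $y\notin W_j$ being absorbed into $E_{p_0,m}(y)$, and the defining choice of $V(x,U)$ ensuring $x\notin E_{p_0,m}(y)$). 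Lemma~\ref{top_to_order} and Theorem~\ref{exp_is_st_pt_net} then close the argument.

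The main obstacle is clause~(2) of the topological point network, i.e.\ controlling the family on a neighbourhood of $(p,m)$ in $P'$: the estimate of the second paragraph handles the $\mathcal{F}$- and $\mathcal{G}$-parts on the neighbourhood $T(x,(p,m))$, but the coarse exterior sets $E_{p',m}(x)$ \emph{a priori} vary over all of that neighbourhood. The exterior complements $X\setminus U_{2n-1}(W)$ with $x\notin W$ are genuinely unavoidable --- they generate the neighbourhoods of points (like $\ast\in A(\omega_1)$) that lie outside almost every subbasic set --- and far too numerous through a single point to enumerate, so they must be collapsed to one set per level; the delicate point is to arrange this collapse so that it is also locally finite in $P'$, which is where the remaining hypotheses on the topological directed set $P$ (first countability, convergent sequences being bounded, and compactness of down-sets, the last exploited through Lemma~\ref{top_to_order}) must be brought fully to bear.
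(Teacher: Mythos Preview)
Your overall strategy --- almost subbase $\Rightarrow$ topological strong point network $\Rightarrow$ (via Lemma~\ref{top_to_order}) order strong point network $\Rightarrow$ (via Theorem~\ref{exp_is_st_pt_net}) expandable network --- is exactly the paper's, and your verification of clause~(1) is essentially correct. But you yourself name the gap and do not close it: clause~(2) fails for your construction as written. The coarse set $E_{p,m}(x)=\bigcup\{U_{2m-1}(W):W\in\alpha_p,\ x\notin W\}$ genuinely depends on $\alpha_p$ through the (typically infinite) collection $\{W\in\alpha_p:x\notin W\}$, and nothing in your set-up forces $p'\mapsto E_{p',m}(x)$ to take only finitely many values on a neighbourhood of $p$. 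Lemma~\ref{base_finite} controls $\{W\in\alpha_{p'}:x\in W\}$, not its complement; first countability and bounded sequences do not help here, and compactness of down-sets has already been spent in Lemma~\ref{top_to_order}. So the proof is incomplete.

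The missing idea is to \emph{discretise} the coarse sets. Fix a countable base $\mathcal{B}$ for $P$ and, for $B\in\mathcal{B}$, set $\alpha_B=\bigcup_{q\in B}\alpha_q$ and $W_{B,x}=X\setminus\bigcup\{A\in\alpha_B:x\notin A\}$. Now replace $E_{p,m}(x)$ by $W_{B,x}$'s, but since clause~(1) requires the particular $B=B(p_0,x)$ (depending on $x$, not on $y$) to appear in $\mathcal{W}(y)$, you must add a third, \emph{discrete} coordinate $\mathcal{B}_0\in Q:=[\mathcal{B}]^{<\omega}$ recording which basic sets are allowed: type~(iii) pieces in $\mathcal{W}_{p,m,\mathcal{B}_0}(x)$ are exactly $\{W_{B,x}:B\in\mathcal{B}_0\}$. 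On a neighbourhood of the form $T\times\{m\}\times\{\mathcal{B}_0\}$ the type~(iii) sets are then fixed and finite. For types~(i)--(ii), replace $\alpha_p$ by $\alpha_{B(p,x)}$, where $B(p,x)$ is the \emph{first} $B\in\mathcal{B}$ (in a fixed enumeration) with $p\in B$ and $(\alpha_B)_x$ finite; then for $p'\in B(p,x)$ the set $B(p',x)$ either equals $B(p,x)$ or precedes it, so only finitely many arise. Finally note $P\times\N\times Q\te P\times\N$. This is precisely the paper's construction; your $E_{p,m}(x)$ is the right intuition, but the passage from ``one coarse set per $p$'' to ``one coarse set per basic $B$, selected by an extra discrete coordinate'' is the step that makes clause~(2) go through.
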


\begin{proof}
Let $\mathcal{B}$ be a countable base for $P$.  Give $Q=[\mathcal{B}]^{<\omega}$ the discrete topology, and order it by $\subseteq$. Note that $P \times \N \times Q$ is a topological directed set (with product partial order and topology) with the same properties hypothesized for $P$.

We will show that $X$ has a $(P \times \N \times Q)$-finite topological strong point network. Then, by Lemma~\ref{top_to_order}, $X$ has a $(P \times \N \times Q)$-finite strong point network (in the order sense).
Lemma~\ref{get_cal} permits us to apply Lemma~\ref{exp_is_st_pt_net}, and, after noting that $P \times \N \times Q \te P \times \N$,  the proof is complete.

Let  $\alpha=\bigcup\{ \alpha_p: p\in P\}$ be an almost subbase for $X$ where each $\alpha_p$ is point-finite and $\alpha_p\subseteq \alpha_{p'}$ if $p\leq p'$. We assume, without loss of generality, that each $\alpha_p$ is closed under finite intersections.

For any $S \subseteq P$ write $\alpha_S = \bigcup \{\alpha_p :p \in S\}$. Fix an enumeration of $\mathcal{B}$. Because $P$ has the relevant order and topological properties, we know (Lemma~\ref{base_finite}) that for every $p$ in $P$ and $x$ in $X$ there is a $B$ in $\mathcal{B}$ containing $p$ such that $(\alpha_B)_x$ is finite. Let $B(p,x)$  be the first member in the enumeration of $\mathcal{B}$  such that $p\in B(p,x)$ and $(\alpha_{B(p,x)})_x$ finite.

Fix $x\in X$.  Fix $p\in P$, $m \in \N$ and $\mathcal{B}_0$ in $Q$. Let $\mathcal{W}_{p,m,\mathcal{B}_0}(x)$ be all (finite) intersections of: (i) $A\in (\alpha_{B(p,x)})_x$, (ii) sets of the form $A\setminus U_{2n-1}(A')$ where $A$ and $A'$ are in $(\alpha_{B(p,x)})_x$, $n\leq m$ and $x\notin U_{2n-1}(A')$, and (iii) $W_{B,x}=X\setminus \bigcup \{A\in \alpha_B: x\notin A\}$ for $B$ in $\mathcal{B}_0$. Note that $\mathcal{W}_{p,m,\mathcal{B}_0}(x)$ is finite. Let $\mathcal{W}(x) =\bigcup \{\mathcal{W}_{p,m,\mathcal{B}_0}(x): p \in P, \ m\in \N \text{ and }\mathcal{B}_0 \in Q\}$.

We  show that $\mathcal{W}=\{\mathcal{W}(x):x\in X\}$  is a $(P \times \N \times Q)$-finite topological strong point network. To do so we need to check (1) and (2) in the definition.

\noindent \emph{For (1):}
Take any point $x$ in an open set $T$. Since $\alpha\cup \{X \setminus U_{2n-1}(A) : A \in \alpha\}$ is a subbase and $P$ is directed, there is a $p_0\in P$, $M\in \N$, $A_0 \in \alpha_{p_0}$, $A_1, \ldots , A_k$ in $\alpha_{p_0}$, $n_1, \ldots , n_k \le M$ such that $x \in A_0 \cap \bigcap_{i=1}^k ( X \setminus U_{2n_i-1}(A_i)) \subseteq T$.

Let $p(x,T)=(p_0,M,\{B(p_0,x)\})$, and
\[V=V(x,T) = \bigcap (\alpha_{B(p_0,x)})_x \cap \bigcap_{i=1}^k (X \setminus U_{2n_i-1}(A_i) ).\]
Then $x$ is in $V$, which is open,  and $V \subseteq T$.

Take any $y$ in $V$. Then $y$ is in $(\alpha_{B(p_0,x)})_x$, so $(\alpha_{B(p_0,x)})_y \supseteq (\alpha_{B(p_0,x)})_x$. It follows that $ x \in W_{B(p_0,x),x} \subseteq W_{B(p_0,x),y}$.

Let $V_0 = \bigcap (\alpha_{B(p_0,x)})_x$. Relabelling if necessary, we can let $V_1=\bigcap_{i=1}^p (A_0 \setminus U_{2n_i-1}(A_i))$ and
$V_2=\bigcap_{i=p+1}^k (X \setminus U_{2n_i-1}(A_i))$, where $y$ is in $A_i$ for $i \le p$ but $y \notin A_i$ for $i>p$. So $V=V_0 \cap V_1 \cap V_2$.
Let $W= \bigcap (\alpha_{B(p_0,x)})_x \cap \bigcap_{i=1}^p (A_0 \setminus U_{2n_i-1}(A_i)) \cap W_{B(p_0,x),y}$. Then each element in this (finite) intersection is as required for $W$ to be in $\mathcal{W}_{p(x,T)}(y)=\mathcal{W}_{p_0,M,\{B(p_0,x)\}}(y)$. Each element in the intersection contains $x$, so $x$ is in $W$.

It remains to show that $W$ is contained in $V$. Clearly $W=V_0 \cap V_1 \cap W_{B(p_0,x),y}$. So it suffices to see that $W_{B(p_0,x),y} \subseteq V$. But for $i>p$, as $y \notin A_i$, by definition we see that $W_{B(p_0,x),y} \subseteq X \setminus A_i$ and $X \setminus A_i \subseteq X \setminus U_{2n_i-1}(A_i)$. Hence $W_{B(p_0,x),y} \subseteq \bigcap_{i=p+1}^k (X \setminus U_{2n_i-1}(A_i))$.

\medskip

\noindent \emph{For (2):} Fix $(p,m,\mathcal{B}_0)$ in $P \times \N \times Q$, We need to show that there is a neighborhood $T$ of it such that $\mathcal{W}_T(x)$ is finite. Indeed we claim $\mathcal{W}_{B(p,x) \times \{(m,\mathcal{B}_0)\}}(x)$ is finite.

Take any $p'$ in $B(p,x)$. Elements of $\mathcal{W}_{p',m,\mathcal{B}_0}(x)$ are intersections of three types. Those of type (iii) are exactly the same as in $\mathcal{W}_{p,m,\mathcal{B}_0}(x)$. Those of type (i) and (ii) come from $A$'s in $(\alpha_{B(p',x)})_x$.
By definition, $B(p',x)$  is either $B(p,m)$ or occurs before it in the enumeration of $\mathcal{B}$. Hence $\{B(p',x) : p' \in B(p,x)\}$ is finite.
So there are only finitely many $A$'s going into types (i) and (ii), and hence $\mathcal{W}_{B(p,x) \times \{(m,\mathcal{B}_0)\}}(x)$ is finite as claimed.
\end{proof}

\subsection*{Further Characterizations}

\begin{theorem}\label{main2b}
Let $X$ be compact. Let $P$ be a topological directed set which is separable metrizable, but not compact, down sets are compact and every convergent sequence has an upper bound.

Then the following are equivalent:

(1) $X$ has a $P$-point finite almost subbase, (2) $X$ has a $P$-point additively Noetherian base, (3) $X$ has a $P$-point finite expandable network, (4) $X$ has a $P$-finite strong point network, and (5) $X$ has a $P$-finite  point network.
\end{theorem}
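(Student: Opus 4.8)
The proposed proof is essentially an assembly of results already in hand, with Theorem~\ref{main2} carrying the only substantive content. The one preliminary observation is that the stated hypotheses force $P$ to be \emph{not} countably directed: since $P$ is separable, fix a countable dense $D \subseteq P$; were $P$ countably directed, $D$ would have an upper bound $p^\ast$, so $\down{p^\ast} \supseteq D$ would be a dense, compact (hence closed) subset of $P$, forcing $P = \down{p^\ast}$ and making $P$ compact --- contrary to hypothesis. Recall that $P \tq P \times \N$ whenever $P$ is not countably directed, so by the transfer remark of the introduction, whenever $X$ has a $(P\times\N)$-$\mathcal{Q}$ base / expandable network / (strong) point network it also has a $P$-$\mathcal{Q}$ one; thus ``$P \times \N$'' may be freely replaced by ``$P$'' in the arguments below.

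Since $P$ is a directed set that is not countably directed and $X$ is compact, Theorem~\ref{main1} applies. Its conditions $(2)$ and $(3)$ are literally our $(1)$ and $(2)$, giving $(1) \Leftrightarrow (2)$; it remains to close the cycle $(1) \Rightarrow (3) \Rightarrow (4) \Rightarrow (5) \Rightarrow (1)$. For $(1) \Rightarrow (3)$: Theorem~\ref{main2} yields a $(P \times \N)$-point finite expandable network for $X$, hence, by the first paragraph, a $P$-point finite one. For $(3) \Rightarrow (4)$: feed a $P$-point finite expandable network $\mathcal{N}$ into the construction of Lemma~\ref{en_spn}; its output strong point network has levels $\mathcal{W}_p(x) = \{ \{x\} \cup N : (N,V) \in \mathcal{N}_p,\ x \in V \}$, of cardinality at most $|(\mathcal{N}_p)_x|$ and hence finite since $\mathcal{N}_p$ is point finite, so $X$ has a $P$-finite strong point network. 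The step $(4) \Rightarrow (5)$ is immediate, a strong point network being a point network. Finally $(5) \Rightarrow (1)$: a $P$-finite point network is \emph{a fortiori} $P$-additively Noetherian (finite families are additively Noetherian), i.e.\ condition $(6)$ of Theorem~\ref{main1}; that theorem then delivers its condition $(2)$, which is our $(1)$.

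I anticipate no genuine obstacle: each implication is a direct invocation of an earlier lemma or theorem. The single delicate point is the reduction from ``$P \times \N$'' to ``$P$'' in the output of Theorem~\ref{main2}, which is exactly where the hypothesis that $P$ is not compact is used (through the first paragraph); it is the analogue, for an abstract topological directed set $P$, of the non-compactness requirement on $M$ imposed in Theorem~\ref{ch_G}.
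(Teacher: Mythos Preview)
Your proof is correct and is precisely the assembly of earlier results that the paper intends (the paper states Theorem~\ref{main2b} without proof, leaving it to the reader to combine Theorem~\ref{main1}, Theorem~\ref{main2}, and Lemma~\ref{en_spn}). Your opening argument that the hypotheses force $P$ to be not countably directed --- via a countable dense set bounded in a compact down set --- is the natural way to justify invoking Theorem~\ref{main1} and absorbing the extra $\N$ factor from Theorem~\ref{main2}; the paper leaves this observation implicit.
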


\begin{theorem} Let $X$ be compact. Then the following are equivalent:
(0) $X$ is Eberlein compact, (1) $X$ has a $\N$-point finite almost subbase,
(2) $X$ has a $\N$-point finite expandable network, and (3) $X$ has a $\N$-finite point network.
\end{theorem}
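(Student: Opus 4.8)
The plan is to read this off as the case $P = \N$ of the two results already assembled in the previous sections, namely Theorem~\ref{main2b} and Theorem~\ref{ch_Eb}; essentially nothing new needs to be proved.

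First I would check that $\N$, with its usual order and the discrete topology, is a topological directed set satisfying \emph{every} hypothesis of Theorem~\ref{main2b}. It is countable, hence separable, and discrete, hence metrizable; being infinite and discrete it is not compact; each down-set $\down{n} = \{1, \ldots, n\}$ is finite and therefore compact; and a convergent sequence in a discrete space is eventually constant, so it assumes only finitely many values and hence has an upper bound in $\N$. Thus Theorem~\ref{main2b} applies with $P = \N$ and yields the equivalence of its conditions (1), (3) and (5) --- which are precisely conditions (1), (2) and (3) of the present theorem (``$\N$-point finite almost subbase'', ``$\N$-point finite expandable network'', ``$\N$-finite point network'').

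It then remains only to fold in condition (0), and this is handled by Theorem~\ref{ch_Eb}: the equivalence there of conditions (0) and (2) says exactly that a compact $X$ is Eberlein compact if and only if it has a $\N$-point finite almost subbase, i.e. (0) $\Leftrightarrow$ (1) in the present numbering. Chaining this with the equivalence (1) $\Leftrightarrow$ (2) $\Leftrightarrow$ (3) obtained from Theorem~\ref{main2b} completes the argument.

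I do not expect any genuine obstacle: the whole content sits in the earlier theorems. The only point that merits a sentence of care is verifying that discrete $\N$ meets the ``every convergent sequence has an upper bound'' clause of Theorem~\ref{main2b}, which, since the only convergent sequences in a discrete space are the eventually-constant ones, is immediate. (If one prefers to avoid invoking Theorem~\ref{main2b} wholesale, the same chain can be rebuilt from Theorem~\ref{main2}, the equivalence $\N \te \N \times \N$, and the passages strong point network $\Rightarrow$ expandable network $\Rightarrow$ point network established earlier; but citing Theorem~\ref{main2b} is cleaner.)
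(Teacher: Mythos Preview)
Your proposal is correct and matches the paper's approach: the theorem is stated immediately after Theorem~\ref{main2b} with no separate proof, so it is understood as the specialization $P=\N$ together with the identification of Eberlein compactness already made in Theorem~\ref{ch_Eb}. Your verification that discrete $\N$ meets all the hypotheses of Theorem~\ref{main2b} is exactly the check required, and your use of Theorem~\ref{ch_Eb} to attach condition~(0) is the self-contained route (the paper also notes that the equivalence of (0) and (2) was first proved by Dow, Junnila and Pelant).
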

As mentioned above, the equivalence of (0) and (2) is due to \cite{DJP}.

\begin{theorem}\label{ch_T2} Let $X$ be compact. Then the following are equivalent:
(0) $X$ is Talagrand compact, (1) $X$ has a $\N^\N$-point finite almost subbase,
(2) $X$ has a $\N^\N$-point finite expandable network, and (3) $X$ has a $\N^\N$-finite point network.
\end{theorem}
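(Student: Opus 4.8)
The plan is to obtain Theorem~\ref{ch_T2} immediately by combining Theorem~\ref{ch_T} with Theorem~\ref{main2b}, so essentially all of the work has already been done in earlier results. For the equivalence of (0) and (1): conditions (0) and (2) of Theorem~\ref{ch_T} assert exactly that $X$ is Talagrand compact if and only if $X$ has a $\N^\N$-point finite almost subbase, so nothing further is needed here (the substantive input being the Mercourakis--Sokolov theorem \cite{Merc, Sokol} already invoked there).

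For the equivalence of (1), (2) and (3), the plan is to apply Theorem~\ref{main2b} with the topological directed set $P = \N^\N$ equipped with the product topology and the coordinatewise order. The only thing left to check is that $\N^\N$ satisfies the hypotheses of that theorem, and each point is routine: $\N^\N$ is separable metrizable, being a countable product of countable discrete spaces; it is not compact, since its coordinate projections are unbounded; each down set $\down f$ is a product of finite sets, hence compact by Tychonoff's theorem; and every convergent sequence has an upper bound, because if $f_k \to f$ in $\N^\N$ then for each coordinate $n$ the values $f_k(n)$ are eventually equal to $f(n)$, so $g(n) := \max_k f_k(n)$ is finite and defines $g \in \N^\N$ with $g \ge f_k$ for all $k$. (All of this is in fact already recorded in the discussion preceding Lemma~\ref{get_cal}.) With the hypotheses confirmed, Theorem~\ref{main2b} yields the equivalence of its conditions (1), (3) and (5) for $P=\N^\N$ --- that is, $X$ has a $\N^\N$-point finite almost subbase if and only if $X$ has a $\N^\N$-point finite expandable network if and only if $X$ has a $\N^\N$-finite point network --- which are precisely conditions (1), (2) and (3) of the present theorem.

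There is no genuine obstacle in this argument; the only point requiring a moment of care is to confirm that $\N^\N$ is a legitimate instance of the topological-directed-set framework of the previous subsection, and to note that since $\N^\N \te \N^\N \times \N$ the factor $\N$ appearing in Theorem~\ref{main2} has already been absorbed into the statement of Theorem~\ref{main2b}, so the characterization can be phrased with $\N^\N$ throughout. All the real content --- the Mercourakis--Sokolov description of Talagrand compacta and the almost-subbase-to-expandable-network construction of Theorem~\ref{main2} --- lies in the earlier results.
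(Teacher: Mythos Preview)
Your proposal is correct and matches the paper's approach: Theorem~\ref{ch_T2} is presented in the paper as an immediate specialization of Theorem~\ref{main2b} to $P=\N^\N$ (the verification that $\N^\N$ has the required topological-directed-set properties having already been recorded in the discussion preceding Lemma~\ref{get_cal}), together with the equivalence of (0) and (1) from Theorem~\ref{ch_T}.
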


\begin{theorem}\label{ch_G2} Let $X$ be compact. Then the following are equivalent:
(0) $X$ is Gulko compact, (1) $X$ has a $\K(M)$-point finite almost subbase,
(2) $X$ has a $\K(M)$-point finite expandable network, and (3) $X$ has a $\K(M)$-finite point network, where in (1), (2) and (3) $M$ is some separable metrizable space.
\end{theorem}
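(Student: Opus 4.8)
The plan is to obtain Theorem~\ref{ch_G2} by assembling Theorem~\ref{ch_G} (the Mercourakis--Sokolov characterization, together with the relevant part of Theorem~\ref{main1}) with Theorem~\ref{main2b}. The one thing to notice at the outset is that for every non-compact separable metrizable space $M$, the directed set $P=\K(M)$, carrying the Vietoris topology and ordered by $\subseteq$, satisfies all the hypotheses of Theorem~\ref{main2b}: as recorded in the preliminary discussion of topological directed sets, $\K(M)$ is separable metrizable, its down sets $\down{K}=\K(K)$ are compact, and any Vietoris-convergent sequence $K_n\to K$ admits the compact union $K\cup\bigcup_n K_n$ as an upper bound; moreover $\K(M)$ is compact if and only if $M$ is, so for non-compact $M$ it is a non-compact topological directed set. (This also explains the restriction to non-compact $M$: when $M$ is compact one has $\K(M)\te 1$, which is the Eberlein situation handled separately.)

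First I would fix a non-compact separable metrizable $M$ and apply Theorem~\ref{main2b} with $P=\K(M)$. Its conclusion is exactly that, for this $M$, the three statements ``$X$ has a $\K(M)$-point finite almost subbase'', ``$X$ has a $\K(M)$-point finite expandable network'' and ``$X$ has a $\K(M)$-finite point network'' are equivalent; these are conditions (1), (2) and (3) of the present theorem, relativized to the chosen $M$.

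Next I would link these to Gulko compactness. By Theorem~\ref{ch_G}, $X$ is Gulko compact if and only if it embeds in some $C_p(X(\Filt))$ where $X(\Filt)$ has a $\K(M)$-ordered compact cover for some non-compact separable metrizable $M$; since such a $\K(M)$ is not countably directed, the equivalence of (1) and (2) in Theorem~\ref{main1}, applied with $P=\K(M)$, shows that for a given $M$ this embedding condition is equivalent to $X$ having a $\K(M)$-point finite almost subbase. Quantifying existentially over $M$ yields ``$X$ is Gulko compact $\iff$ $X$ has a $\K(M)$-point finite almost subbase for some non-compact separable metrizable $M$''. Combining this with the previous step (once more taking $\exists M$ through the equivalences of Theorem~\ref{main2b}) gives the complete chain (0) $\iff$ (1) $\iff$ (2) $\iff$ (3).

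Since every link in the argument invokes a result already proved, I do not expect a genuine difficulty. The only care needed is in checking that $\K(M)$ (for non-compact $M$) really is a non-compact topological directed set meeting the hypotheses of Theorem~\ref{main2b}, and in keeping the existential quantifier over $M$ consistent across all four conditions.
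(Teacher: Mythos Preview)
Your proposal is correct and matches the paper's approach: Theorem~\ref{ch_G2} is stated in the paper without proof, as an immediate consequence of Theorem~\ref{main2b} (applied to $P=\K(M)$ for non-compact separable metrizable $M$) together with the earlier characterization Theorem~\ref{ch_G}. Your verification that $\K(M)$ with the Vietoris topology meets the hypotheses of Theorem~\ref{main2b}, and your handling of the existential quantifier over $M$, are exactly the details the paper leaves implicit.
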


At first glance the equivalence  of (0) and (2) in the preceding two results correspond to results of
Garcia, Oncina and Orihuela \cite{GOO}. However Garcia et al used a variant of expandable networks that we shall call \emph{expandable networks$^*$} and established that a compact space is Gulko (respectively, Talagrand) if and only it has a $P$-point finite expandable network$^*$ for $P=$ some $\K(M)$ (respectively, $P=\N^\N$).

Let $\mathcal{N}$ be a family of subsets of a space $X$. Then $\mathcal{N}$ is a $P$-point finite expandable network$^*$ if  it can be indexed $\mathcal{N}=\{N_i : i \in \mathcal{I}\}$ where $\mathcal{I}=\bigcup \{\mathcal{I}_p : p\in P\}$
and for every $i\in \mathcal{I}$ there exists an open set $V_i\supseteq N_i$ in $X$ such that:

(a) if $p \le p'$ then $\{V_i : i \in \mathcal{I}_p\} \subseteq \{V_i : i \in \mathcal{I}_{p'}\}$,

(b) for any $x\in X$ and $p\in P$, the set $\{i: i \in \mathcal{I}_p$ and $x\in V_i\}$ is finite, and

(c) for any open set $U$ containing a point $x$ of $X$, there is an $N$ in $\mathcal{N}$ such that $x \in N \subseteq U$.

In the opinion of the authors our definitions of `expandable network' and `$P$-point-finite' are the most natural ones. It is certainly what we need for the results of this paper, and in particular there is no natural connexion between an  expandable network$^*$ and point networks. Of course Theorems~\ref{ch_T2}, and~\ref{ch_G2} say that the two concepts coincide where the authors of \cite{GOO} used them.

\subsection*{Without Compactness}

Theorem~\ref{main2b} is stated for compact spaces. However the implications `(1) $\implies$ (2)', `(3) $\iff$ (4)', and `(4) $\implies$ (5)' all hold for general spaces. The authors do not know of an example when condition (5) does not give condition (4).

The following example gives a strong (non-compact) counter-example to `(3) $\implies$ (2)' with $P=\N$.

\begin{lemma}
There is a countable space with a  point finite expandable network but no $\N$-point additively Noetherian base.
\end{lemma}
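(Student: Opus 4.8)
The plan is to separate the statement into its two clauses, the first of which is essentially free and the second of which carries all the weight.

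For the expandable network I would observe that \emph{every} countable $T_1$ space admits a genuinely point finite expandable network, so this half imposes no restriction on the space I ultimately choose. Fixing an enumeration $X=\{x_n : n \in \N\}$, set $V_n = X \setminus \{x_0,\dots,x_{n-1}\}$; each $V_n$ is open (the space is $T_1$) and contains $x_n$. Then $\mathcal{N}=\{(\{x_n\},V_n) : n \in \N\}$ is an expandable network, since $x_n \in \{x_n\} \subseteq U$ whenever $x_n$ lies in an open set $U$; and because $x_m \in V_n$ precisely when $n \le m$, each point lies in only finitely many of the second coordinates, so $\mathcal{N}$ is point finite. Thus the whole content is to produce a countable space with \emph{no} $\N$-point additively Noetherian base. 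Such a space cannot be metrizable, because a countable metrizable space has a $\sigma$-discrete (hence $\sigma$-point finite, hence $\N$-point additively Noetherian) base; being countable and Tychonoff it must therefore fail to be first countable.

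Next I would reduce the absence of such a base to a condition at a single point. I would take $X = \{c\} \cup \omega$ with the points of $\omega$ isolated and the neighbourhoods of $c$ a free filter $\mathcal{F}$ on $\omega$; any such $X$ is zero-dimensional, hence Tychonoff. Call a finite sequence or $\omega$-sequence of neighbourhoods $g_0,g_1,\dots$ of $c$ a \emph{staircase} if there are points $y_j$ with $y_j \in g_j \setminus \bigcup_{i<j} g_i$. By Lemma~\ref{Noeth}(4), for a base $\mathcal{B}=\bigcup_n \mathcal{B}_n$ the trace $(\mathcal{B}_n)_c$ is additively Noetherian exactly when it contains no infinite staircase. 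The key point is a pigeonhole: if the full local base $(\mathcal{B})_c=\bigcup_n (\mathcal{B}_n)_c$ contained an infinite staircase $(g_j,y_j)$, some $(\mathcal{B}_n)_c$ would contain infinitely many of the $g_j$, and such a monochromatic subsequence is again a staircase because $y_j \notin g_i$ for \emph{all} $i<j$. Hence $X$ has a $\N$-point additively Noetherian base if and only if $\mathcal{F}$ has a single base containing no infinite staircase. So it suffices to build a free, not countably generated filter $\mathcal{F}$ (so that $c$ is not first countable) \emph{every} base of which contains an infinite staircase.

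Finally I would dualize and construct the filter. A base $\mathcal{A}$ of $\mathcal{F}$ is staircase-free exactly when its finite unions satisfy the ascending chain condition, equivalently when the cofinal family $\{\omega \setminus A : A \in \mathcal{A}\}$ in the dual ideal $\mathcal{I}$ has its finite intersections satisfy the descending chain condition; for the natural candidate, the sequential fan, this says concretely that some pointwise-dominating family in $\N^{\N}$ is additively Noetherian. The strategy is therefore to engineer $\mathcal{I}$ so that \emph{no} cofinal subfamily enjoys this property: recursively, given finitely many chosen neighbourhoods of $c$ one must always find a further basic neighbourhood capturing a point outside their union. I expect the construction of $\mathcal{F}$ to be \textbf{the main obstacle, and indeed the whole difficulty of the lemma}. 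The delicate danger is that a base may contract "downwards'' so rapidly that each of its members already sits inside the union built so far, stalling the recursion; ruling this out \emph{simultaneously for every base} is exactly what an intrinsic, structural choice of $\mathcal{F}$ must secure, and it is here — rather than in the expandable network or the reduction above — that the real work lies.
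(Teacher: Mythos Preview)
Your reductions are sound, and in one place slicker than the paper's: the blanket observation that any countable $T_1$ space carries a point finite expandable network dispatches that clause with no reference to the particular example, and the pigeonhole collapsing an $\N$-indexed base to a single additively Noetherian local base at $c$ is valid (any subsequence of a staircase is again a staircase). The paper does use the sequential fan, so your ``natural candidate'' is the right one.

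But you stop exactly where the content is. You must prove that \emph{no} base of the fan's neighbourhood filter at $\ast$ is additively Noetherian --- equivalently, that every dominating family in $\omega^\omega$ yields an infinite staircase among the corresponding neighbourhoods --- and you give no argument, only a description of what a successful recursion would look like and a warning about how it might stall. That step is the entire lemma. The paper's argument runs as follows: from a neighbourhood base extract the associated functions $f_U(m)=\min\{n:(m,n)\in U\}$, which form a $\le^*$-dominating family; inside it find a $<^*$-increasing unbounded chain $(g_\alpha)_{\alpha<\mathfrak{b}}$; then recursively thin $\mathfrak{b}$ to cofinal sets $S_0\supseteq S_1\supseteq\cdots$ on which $g_\alpha(n)$ is a constant $k(n)$, and use unboundedness of the chain against $k$ to choose $\alpha_0<\alpha_1<\cdots$ and columns $n_0<n_1<\cdots$ with $g_{\alpha_p}(n_p)=k(n_p)<g_{\alpha_i}(n_p)$ for every $i<p$. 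The neighbourhoods $U_p$ with $f_{U_p}=g_{\alpha_p}$ together with the points $(n_p,k(n_p))$ then form the required staircase. Without this diagonalization (or an equivalent), your proposal is a correct outline whose central step is missing.
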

\begin{proof}
Let $\Fan$ be the Frechet fan. So $\Fan$ is obtained from $\omega \times (\omega +1)$ by identifying the end points $(m,\omega)$ to a point $\ast$. Let $\mathcal{N}= \{ (\{(m,n)\},\{(m,n)\}) : m,n \in \omega\} \cup \{(\{\ast\} , \Fan)\}$. This can easily be checked to be a point finite expandable network.

To complete the proof we show that $\ast$ has no $\N$-additively Noetherian base. If $U$ is an open neighborhood of $\ast$ define $f_U$ in $\omega^\omega$ by $f_U(m) = \min \{n : (m,n) \in U\}$. Conversely, if $g \in \omega^\omega$ then define $U_g=\{(m,n) : g(m) \le n\} \cup \{\ast\}$ an open neighborhood of $\ast$.

Suppose $\bigcup \{\mathcal{B}_n : n \in \N\}$ is a neighborhood base for $\ast$. We show there is an $N$ such that $\mathcal{B}_N$ is not additively Noetherian. Observe that $\bigcup \{\mathcal{F}_n : n \in \N\}$ where $\mathcal{F}_n=\{ f_U : U \in \mathcal{B}_n\}$ is a dominating family in $(\omega^\omega,\le^*)$ (here $\le^*$ is the mod-finite order). So for some $N$ we have that $\mathcal{F}_N$ is dominating in $(\omega^\omega,\le^*)$. Let $\mathcal{B}=\mathcal{B}_N$ and $\mathcal{F}=\mathcal{F}_N$. Hence there is a $\mathcal{G}=\{ g_\alpha : \alpha < \mathfrak{b}\}$ a subset of $\mathcal{F}$ such that $\mathcal{G}$ is unbounded and $g_\alpha <^* g_\beta$ if $\alpha < \beta$, where $\mathfrak{b}$ is the minimal size of an unbounded family in $(\omega^\omega,\le^*)$.

Define, by recursion, a decreasing sequence $(S_n)_{n\in \omega}$ of cofinal subsets of $\mathfrak{b}$ such that, for all $\alpha \in S_n$, $g_\alpha (n)=k_n$, for some fixed $k_n$. Define $k$ in $\omega^\omega$ by $k(n)=k_n$.

Set $n_0=0$. As $\mathcal{G}$ is unbounded, and $S_0$ is cofinal in $\mathfrak{b}$, there is an $\alpha_0$ in $S_0$ such that $g_{\alpha_0} \not\le^* k$. So, for infinitely many $n$, $k(n) < g_{\alpha_0}(n)$. Let $n_1$ be the first such $n$. Next choose $\alpha_1 \in S_{n_1}$, $\alpha_0 < \alpha_1$. Then $g_{\alpha_0} <^* g_{\alpha_1}$ and $g_{\alpha_1}(n_1)=k(n_1) < g_{\alpha_0}(n_1)$. And now choose $n_2 >n_1$ such that $k(n_2) < g_{\alpha_0}(n_2) < g_{\alpha_1}(n_2)$.

Proceeding inductively, we find a strictly increasing sequence of ordinals $(\alpha_i)_{i \in \omega}$ and integers $(n_i)_i$ such that, for every $p \in \omega$, $g_{\alpha_p}(n_p)=k(n_p)< \min \{g_{\alpha_i}(n_p) : 0 \le i < p\}$ and $(g_{\alpha_i})_i$ is strictly increasing with respect to $<^*$.

The $g_{\alpha_i}$'s are in $\mathcal{F}$, and hence of the form $g_{\alpha_i}=f_{U_i}$ for some $U_i$ in $\mathcal{B}$. So, by definition of $f_{U_i}$, for each $p$, $(n_p,g_{\alpha_p}(n_p)) \in U_p$, but, since $g_{\alpha_p}(n_p) < \min \{g_{\alpha_i}(n_p) : 0 \le i < p\}$, for $i<p$, we have $(n_p,g_{\alpha_p}(n_p)) \notin U_i$.

From the last line and Lemma~\ref{Noeth} (4), it follows that $\mathcal{B}=\mathcal{B}_N$ is not additively Noetherian, as desired.
\end{proof}

Our last example shows that for non-compact spaces condition (2) does not imply (5), even with $P=1$ in the hypothesis and `$P$ calibre $(\omega_1,\omega)$' in the conclusion.
\begin{lemma}
There is a space with point additively Noetherian base but no countable point network, and hence no $P$-finite point network for any $P$ with calibre $(\omega_1,\omega)$.
\end{lemma}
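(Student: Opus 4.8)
The plan is to take $X=\mathbb{S}$, the Sorgenfrey line (the reals topologised by the base $\{[a,b):a<b\}$), which is non-compact. Two things must be checked: that $\mathbb{S}$ has a point additively Noetherian base, and that $\mathbb{S}$ admits no countable point network. The final clause of the statement is then automatic: if $P$ had calibre $(\omega_1,\omega)$ and $\mathbb{S}$ had a $P$-finite point network $\mathcal{W}(x)=\bigcup_{p}\mathcal{W}_p(x)$ with each $\mathcal{W}_p(x)$ finite, then by Lemma~\ref{use_calibre}(a) (with $\kappa=\aleph_0$) each $\mathcal{W}(x)=\bigcup_p\mathcal{W}_p(x)$ would be countable, contradicting the second fact.

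For the first fact, let $\mathcal{B}$ be the collection of all open subsets of $\mathbb{S}$; trivially $\mathcal{B}$ is a base. The Sorgenfrey line is hereditarily Lindelof, so every subcollection of $\mathcal{B}$ has a countable subcollection with the same union; this is precisely condition~(3) of Lemma~\ref{Noeth} with $\kappa=\aleph_1$, whence $\mathcal{B}$ is additively $\aleph_0$-Noetherian (exactly as for the Double Arrow space above), and {\it a fortiori} point additively Noetherian.

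The substantive part is that $\mathbb{S}$ has no countable point network. Assume $\mathcal{W}=\{\mathcal{W}(x):x\in\mathbb{S}\}$ is a point network with every $\mathcal{W}(x)$ countable. For each $x$, apply the definition to the open neighbourhood $[x,x+1)$ of $x$: there is an open $V$ with $x\in V\subseteq[x,x+1)$, hence $[x,x+\varepsilon_x)\subseteq V$ for some $\varepsilon_x>0$, such that every $y\in V$ admits $W\in\mathcal{W}(y)$ with $x\in W\subseteq[x,x+1)$. Now fix $y$ and put $A_y=\{x<y:\varepsilon_x>y-x\}$. For $x\in A_y$ we have $y\in[x,x+\varepsilon_x)\subseteq V$, so there is $W_x\in\mathcal{W}(y)$ with $x\in W_x\subseteq[x,x+1)$ (and $y\in W_x$, since $W_x\in\mathcal{W}(y)$); if $x<x'$ both lie in $A_y$, then $W_x\neq W_{x'}$, since otherwise $x\in W_x=W_{x'}\subseteq[x',x'+1)$ forces $x\ge x'$. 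Thus $|\mathcal{W}(y)|\ge|A_y|$, and it suffices to produce $y$ with $A_y$ uncountable. Setting $E_n=\{x:\varepsilon_x>1/n\}$ we have $\mathbb{R}=\bigcup_nE_n$, so some $E_n$ is uncountable; let $y_0=\sup\{q:E_n\cap(q,\infty)\text{ uncountable}\}$. Then $E_n\cap(y_0-\delta,y_0)$ is uncountable for every $\delta>0$; taking $\delta=1/n$, every $x\in E_n\cap(y_0-1/n,y_0)$ satisfies $x<y_0$ and $\varepsilon_x>1/n>y_0-x$, so $A_{y_0}\supseteq E_n\cap(y_0-1/n,y_0)$ is uncountable, and hence $\mathcal{W}(y_0)$ is uncountable --- a contradiction.

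The main obstacle is precisely this last argument: extracting from an arbitrary point network a single point whose local family is forced to be uncountable. The mechanism is that each witnessing $W_x$ is trapped between $x$ and $x+1$, so it pins down $x$, forcing the members $W_x$ ($x\in A_y$) of $\mathcal{W}(y)$ to be pairwise distinct; the only set-theoretic input is that an uncountable set of reals has a one-sided condensation point, used to make $A_{y_0}$ uncountable. The remaining verifications --- that $V$ may be shrunk to a basic neighbourhood $[x,x+\varepsilon_x)$, that $\mathbb{S}$ is hereditarily Lindelof, and that taking $\mathcal{B}$ to be all open sets is legitimate --- are routine.
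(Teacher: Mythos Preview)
There is a genuine error in the first half of your argument. You show that the family $\mathcal{B}$ of all open subsets of $\mathbb{S}$ is additively $\aleph_0$-Noetherian (that is, additively $<\!\aleph_1$-Noetherian) via hereditary Lindel\"ofness, and then write ``\emph{a fortiori} point additively Noetherian''. But in the paper's conventions, ``additively Noetherian'' means additively $<\!\aleph_0$-Noetherian, which is \emph{strictly stronger} than additively $\aleph_0$-Noetherian, not weaker. So the implication goes the wrong way. (Compare: the Double Arrow example earlier in the paper was used precisely to illustrate that the $\aleph_0$ version is too weak to characterize anything interesting.)

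Worse, the Sorgenfrey line has \emph{no} point additively Noetherian base at all, and your own second argument essentially proves it. Given any base $\mathcal{B}$ for $\mathbb{S}$, pick for each $x$ some $B_x\in\mathcal{B}$ with $x\in B_x\subseteq[x,x+1)$ and let $\varepsilon_x>0$ satisfy $[x,x+\varepsilon_x)\subseteq B_x$. Exactly as in your point-network argument, find $y$ with $A_y=\{x<y:\varepsilon_x>y-x\}$ infinite (an infinite decreasing sequence $x_0>x_1>\cdots$ in $A_y$ suffices). Then each $B_{x_n}$ lies in $(\mathcal{B})_y$, and $x_n\in B_{x_n}$ while $x_n\notin B_{x_m}\subseteq[x_m,x_m+1)$ for $m<n$; by Lemma~\ref{Noeth}(4) this witnesses that $(\mathcal{B})_y$ is not additively Noetherian. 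Thus $\mathbb{S}$ cannot serve as the example.

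The paper instead uses $X=\{0,1\}^{\omega_1}$ with the topology whose basic neighbourhoods of $\mathbf{x}$ are $B_\alpha(\mathbf{x})=\{\mathbf{y}:\mathbf{y}\upharpoonright\alpha=\mathbf{x}\upharpoonright\alpha\}$. The point is that $(\mathcal{B})_{\mathbf{x}}$ is then a \emph{decreasing} $\omega_1$-chain, so it (and its finite unions) has no infinite ascending chain --- exactly what additively Noetherian requires. The non-existence of a countable point network is then obtained by a transfinite recursion producing a nested $\omega_1$-sequence of basic open sets.

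A minor separate point: in your condensation-point step, $y_0=\sup\{q:E_n\cap(q,\infty)\text{ uncountable}\}$ may be $+\infty$ (e.g.\ if $\varepsilon_x>1/n$ for all $x$). This is easily repaired --- cover $\mathbb{R}$ by intervals of length $1/(2n)$ and take $y$ to be the right endpoint of one meeting $E_n$ uncountably --- but as it stands the argument is incomplete there too.
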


\begin{proof} Let $X=\{0,1\}^{\omega_1}$. For any $\vx\in X$, a basic open neighborhood of $\vx$ is $B_\alpha(\vx)=\{\vy: \vy(\gamma)=\vx(\gamma) \text{ for all }\gamma<\alpha\}$ for $\alpha<\omega_1$. Then $\mathcal{B}=\{B_\alpha(\vx):\vx\in X \text{ and }\alpha<\omega_1\}$ is a base. Note that: (1) given any two elements of $\mathcal{B}$, either they have empty intersection or one is a subset of the other; (2) any countable intersection of $\mathcal{B}$ is either empty or in $\mathcal{B}$; and (3) any  countable subset of $X$ is closed and discrete. For any $\vx, \vy\in X$, $\vx\in B_\alpha(\vy)$ implies that $B_\alpha(\vx)=B_\alpha(\vy)$. Therefore, for any $\vx\in X$, $(\mathcal{B})_x=\{B_\alpha(\vx) \}$. We can see that, $(\mathcal{B})_x$ ordered by reverse inclusion is order-isomorphic to $\omega_1$. So $\mathcal{B}$ is point additively Noetherian.

Suppose, for a contradiction, that $X$ has a countable point network, $\mathcal{W}=\bigcup\{\mathcal{W}(\vx): \vx\in X\}$. Also, for each $\vx\in X$ and an open set $U$ containing $\vx$, we have the corresponding $V(\vx, U)$. We can assume that $V(\vx, U)$ is in $\mathcal{B}$.

Pick $\vx_0\in X$, and let $V_0=V(\vx_0, X)$. If $\alpha$ is a successor, pick $\vx_\alpha\in V_{\alpha^-}$ and  $\vx_\alpha\neq \vx_\beta$ for all $\beta<\alpha$, and also let $V_\alpha$ be a basic open neighborhood of $\vx_\alpha$ which is a subset of $V(x_\alpha, V_{\alpha^-})\cap B_\alpha(\vx_\alpha)\setminus \{\vx_\beta: \beta<\alpha\}$. If $\alpha$ is a limit ordinal, pick $\vx_\alpha\in\bigcap \{V_{\beta}: \beta<\alpha\}$ also $\vx_\alpha\neq \vx_\beta$ for all $\beta<\alpha$, and let $V_\alpha$ be a basic open neighborhood of $\vx_\alpha$ which is a subset of $V(\vx_\alpha, \bigcap \{V_{\beta}: \beta<\alpha\})\cap B_\alpha(\vx_\alpha)\setminus \{\vx_\beta: \beta<\alpha\}$. We get a transfinite sequence of points $\{\vx_\alpha: \alpha<\omega_1\}$, together with a decreasing transfinite sequence of basic open sets  $\{V_\alpha: \alpha<\omega_1\}$. Note that $\bigcap \{V_\alpha: \alpha <\omega_1\}$ is not empty. Then there is a $\vy\in \bigcap \{V_\alpha: \alpha <\omega_1\}$. For each $\alpha<\omega_1$, there is a $W_\alpha\in\mathcal{W}(\vy)$ such that $\vx_{\alpha+1}\in W_\alpha\subseteq V_{\alpha}$. Since $\mathcal{W}(\vy)$ is countable, there exists an uncountable subset $A$ of $\omega_1$ such that $W_\alpha$ is a fixed $W$ for each $\alpha\in A$. Since $\vy\in B_\alpha(\vx_\alpha)$, $B_\alpha(\vx_\alpha)=B_\alpha(\vy)$. Choose $\beta\in A$, then $\vx_{\beta+1}\in W$. Since $\vx_{\beta+1}\neq \vy$ and $A$ is uncountable, we can find $\gamma\in A$ such that $\vx_{\beta+1} \notin B_{\gamma}(\vy)$. However, since $\vy\in V(\vx_{\gamma+1}, V_\gamma)$, we have $W\subseteq V_\gamma$ which is a subset of $B_{\gamma}(\vy)$ which contradicts the fact $\vx_{\beta+1} \notin B_{\gamma}(\vy)$. This complete the proof.
\end{proof}

\end{document}